\def\rtwidth{0.6}
\def\rtheight{0.4}
\theoremstyle{plain}
      \newtheorem{theorem}{Theorem}[section]
      \newtheorem{proposition}[subsubsection]{Proposition}
      \newtheorem{lemma}[subsubsection]{Lemma}
      \newtheorem{corollary}[subsubsection]{Corollary}
      \theoremstyle{definition}
      \newtheorem{remark}[theorem]{Remark}
      \newtheorem*{conjecture*}{Conjecture}
\newcommand{\ZZ}{{\mathbb{Z}}}
\newcommand{\CC}{{\mathbb{C}}}
\DeclareMathOperator{\GL}{GL}
\DeclareMathOperator{\SL}{SL}
\DeclareMathOperator{\SO}{SO}
\DeclareMathOperator{\codim}{codim}
\newcommand{\Lgroup}[1]{{\hskip-2 pt \,^L\hskip-1pt{#1}}}
\newcommand{\dualgroup}[1]{{\widehat{#1}}}
\newcommand{\Frob}{{\operatorname{\mathfrak{f}_F}}}
\DeclareMathOperator{\id}{id}
\DeclareMathOperator{\trace}{trace}
\DeclareMathOperator{\rank}{rank}
\DeclareMathOperator{\image}{im}
\DeclareMathOperator{\Ad}{Ad}
\DeclareMathOperator{\Sym}{Sym}
\DeclareMathOperator{\sing}{sing}
\newcommand{\abs}[1]{{\vert #1 \vert}}
\newcommand{\ceq}{{\, :=\, }}
\newcommand{\tq}{{\ \vert\ }}
\newcommand{\iso}{{\ \cong\ }}
\DeclareMathOperator{\diag}{diag}
\newcommand{\Perv}{\mathsf{Per}}
\newcommand{\Loc}{\mathsf{Loc}}
\newcommand{\Rep}{\mathsf{Rep}}
\newcommand{\Ev}{\operatorname{\mathsf{E}\hskip-1pt\mathsf{v}}}
\newcommand{\Evs}{\operatorname{\mathsf{E}\hskip-1pt\mathsf{v}\hskip-1pt\mathsf{s}}}
\newcommand{\IC}{{\mathcal{I\hskip-1pt C}}}
\newcommand{\RPhi}{{\mathsf{R}\hskip-0.5pt\Phi}}
\newcommand{\labitem}[2]{
\def\@itemlabel{\textbf{#1}}
\item
\def\@currentlabel{#1}\label{#2}}
\newcommand{\1}{{\mathbbm{1}}}
\newcommand{\Ind}{\text{Ind}}
\newcommand{\A}{\mathbb{A}}
\newcommand{\Aut}{\text{Aut}}
\newcommand{\Int}{\text{Int}}
\newcommand{\Hom}{\text{Hom}}
\newcommand{\KPair}[2]{( \, #1\, \vert\, #2\, )}
\newcommand{\Lie}{\operatorname{Lie}}
\newcommand\KS{{\operatorname{KS}}}
\newcommand{\St}{\text{St}}
\newcommand{\gl}{\mathfrak{gl}}
\newcommand{\Mat}{\operatorname{Mat}}
\newcommand{\ABV}{{\mbox{\raisebox{1pt}{\scalebox{0.5}{$\mathrm{ABV}$}}}}}
\tikzset{
  symbol/.style={
    draw=none,
    every to/.append style={
      edge node={node [sloped, allow upside down, auto=false]{$#1$}}}
  }
}
\def\code#1{\texttt{#1}}
\keywords{Admissible representations, Arthur parameters, L-packets, Langlands correspondence, Kashiwara-Saito singularity, perverse sheaves, vanishing cycles} 
\subjclass{11F70, 22E50, 32S30, 32S60}
\date{\today}                                           % Activate to display a given date or no date
\begin{document}

%\title{Kashiwara-Saito representations of $p$-adic general linear groups}
%\title[Appearance of the Kashiwara-Saito singularity]{Appearance of the Kashiwara-Saito singularity in the representation theory of $p$-adic $\mathop{GL}_{16}$}
\title[Appearance of the Kashiwara-Saito singularity]{Appearance of the Kashiwara-Saito singularity in the representation theory of $p$-adic $\GL_{16}$}

\author[C. Cunningham]{Clifton Cunningham}
\address{Department of Mathematics and Statistics, University of Calgary, 
2500 University Drive NW, 
Calgary, Alberta, 
T2N 1N4, 
Canada}
\email{clifton@automorphic.ca}
\thanks{Clifton Cunningham gratefully acknowledges the support of NSERC Discovery Grant RGPIN-2015-06103 and additional support from the Pacific Institute for the Mathematical Sciences (PIMS)}

\author[A. Fiori]{Andrew Fiori}
\address{Department of Mathematics and Statistics, University of Lethbridge,
4401 University Drive,
Lethbridge, Alberta,
T1K 3M4,
Canada}
\email{andrew.fiori@uleth.ca}
\thanks{Andrew Fiori thanks and acknowledges the University of Lethbridge for their financial support as well as the support of NSERC Discovery Grant RGPIN-2020-05316.}

\author[N. Kitt]{Nicole Kitt}
\address{University of Waterloo, Department of Pure Mathematics,
200 University Avenue West
Waterloo, Ontario, N2L 3G1, Canada
}
\email{nkitt@uwaterloo.ca}
\thanks{Nicole Kitt would like to thank and acknowledge support from the NSERC Undergraduate Summer Research Award (USRA) program and from the University of Calgary PURE program.}

\begin{abstract}
In 1993 David Vogan proposed a basis for the vector space of stable distributions on $p$-adic groups using the microlocal geometry of moduli spaces of Langlands parameters. In the case of general linear groups, distribution characters of irreducible admissible representations, taken up to equivalence, form a basis for the vector space of stable distributions. In this paper we show that these two bases, one putative, cannot be equal. Specifically, we use the Kashiwara-Saito singularity to find a non-Arthur type irreducible admissible representation of $p$-adic $\mathop{GL}_{16}$ whose ABV-packet, as defined in \cite{CFMMX}, contains exactly one other representation. Consequently, for general linear groups, while all A-packets are singletons, some ABV-packets are not.
In the course of the proof of this result we strengthen the main result concerning the Kashiwara-Saito singularity in \cite{KS}. 
\end{abstract}
\maketitle

%\tableofcontents

\section{Introduction}

\subsection{Background}\label{ssec:background}

The local Langlands correspondence for a $p$-adic group $G(F)$ is generally stated, or more precisely, conjectured, as a  bijection between sets: on the one hand, the set $\Pi(G(F))$ of equivalence classes  of irreducible admissible representations of $G(F)$; and on the other hand, the set $\Xi(\Lgroup{G})$ of equivalence classes of pairs $\xi = (\phi,\epsilon)$, called enhanced Langlands parameters, where $\phi : W'_F \to \Lgroup{G}$ is a Langlands parameter and $\epsilon$ is an irreducible representation of a finite group attached to $\phi$. This bijection should satisfy a list of properties, largely derived from compatibility with class field theory and the principle of functoriality together with certain normalizing choices.
For general linear groups over $p$-adic fields, the proof of the local Langlands correspondence was concluded by \cite{Harris-Taylor}, building on earlier work including \cite{Z2}, and for quasisplit classical groups of $p$-adic fields the correspondence is known by work of Arthur \cite{Arthur:book}, again relying on the work of a large community of mathematicians.

Since $\Pi(G(F))$ is the set of equivalence classes of simple objects in a category, namely, the category of smooth representations of $G(F)$, it is natural to ask if we can identify $\Phi(G(F))$ with the set of equivalence classes of simple objects in a category too and then seek a relation between these categories that would recover the bijection above on simple objects. In 1993 David Vogan observed that simple objects in the category $\Perv_{\dualgroup{G}}(X_\lambda)$ of equivariant perverse sheaves on the moduli space $X_\lambda$ of Langlands parameters with a common ``infinitesimal parameter'' $\lambda$, are naturally identified with pairs $(\phi,\epsilon)$ as above, when the finite group attached to $\phi$ is properly interpreted and when the corresponding representations $\pi$ are enlarged to include $G(F)$ and its pure inner forms.
When viewed from this perspective, the local Langlands correspondence takes the form of a bijection between the set $\Xi_\lambda(\Lgroup{G})$ of equivalence classes of objects in $\Perv_{\dualgroup{G}}(X_\lambda)$ and the set of simple objects $\Pi_\chi(G/F)$ in the category $\Rep_\chi(G/F)$ of smooth representations of $G(F)$, and its pure inner forms, with matching ``infinitesimal character'' $\chi$. There is considerable evidence that the categories $\Rep_\chi(G/F)$ and $\Perv_{\dualgroup{G}}(X_\lambda)$ are more closely related than simply by a bijection between their simple objects  -- at the very least we expect that their derived categories should be equivalent -- and this evidence motivates a closer study of $X_\lambda$.

The Langlands correspondence determines a partition of $\Pi(G(F))$ into finite sets $\Pi_\phi(G(F))$, called {\it L}-packets, consisting of those irreducible admissible representations $\pi$ that correspond to pairs $(\phi,\epsilon)\in \Xi(\Lgroup{G})$ for this fixed $\phi$. 
In cases where they have been defined, local Arthur packets $\Pi_\psi(G(F))$, also known as A-packets, are also finite sets of irreducible representations, where $\psi$ is an Arthur parameter, but these packets are not disjoint.
The representations that appear in Arthur packets are said to be of Arthur type and form a class that is conjectured to be the same as unitary admissible irreducible representations.
Although Arthur packets were introduced in the late 1980s, it was not until \cite{Arthur:book} that a purely local characterization of local Arthur packets was available, for quasi-split classical groups and in partial form for inner twists of classical groups.
Since then this work has been extended to unitary groups.
Every A-packet $\Pi_\psi(G(F))$ contains a distinguished {\it L}-packet $\Pi_{\phi_\psi}(G(F))$ through a simple injection $\psi \mapsto \phi_\phi$ from Arthur parameters to Langlands parameters of Arthur type.
We refer to $\Pi_{\psi}(G(F))\setminus \Pi_{\phi_\psi}(G(F))$ as the {\it corona} of $\Pi_{\phi_\psi}(G(F))$.
For $G=\GL_n$ the equality $\Pi_{\psi}(G(F)) =  \Pi_{\phi_\psi}(G(F))$ is a consequence of \cite{Arthur:unipotent-motivation}, for all $A$-parameters $\psi$.

Vogan's perspective on the local Langlands correspondence suggests a beautiful geometric approach to A-packets and in fact defines a corona for every {\it L}-packet, not just those of Arthur type \cite{Vogan:Langlands}.
In particular, this perspective attaches, to every irreducible admissible representation $\pi$ of $G(F)$, a simple equivariant perverse sheaf $\mathcal{P}(\pi)$ on the moduli space of Langlands parameters with the same infinitesimal parameter as $\pi$.
Following an approach similar to \cite{ABV}, Vogan then defines the ABV-packet $\Pi_\phi^\ABV(G(F))$ for any Langlands parameter $\phi$, as the set of irreducible representations of $G(F)$ for which the characteristic cycles of $\mathcal{P}(\pi)$ contains the conormal bundle of orbit of $\phi$ in this moduli space.
This notion is revisited in \cite{CFMMX} where it is cast in terms of vanishing cycles and a functor denoted there by $\Evs$; 
specifically, we define
 \[  \Pi_\phi^\ABV(G(F)) = \{ \pi \tq \Evs_{C_\phi}( \mathcal{P}(\pi) ) \neq 0 \}, \]
where $C_\phi$ is the orbit of $\phi$ in this moduli space.
We wish to emphasize that this definition of $\Pi_\phi^\ABV(G(F))$ assumes the Langlands correspondence and is therefore unconditional for general linear groups and quasi-split classical groups. Moreover, since the Langlands correspondence is known for unipotent representations of  simple, adjoint $p$-adic groups by \cite{Lusztig:classification-unipotent} and for all connected reductive $p$-adic groups by \cite{Solleveld:LLC-unipotent}, the definition of $\Pi_\phi^\ABV(G(F))$ is also unconditional in these cases.
%\todo{Regarding (1): They are correct that Lusztig treats only simple, adjoin groups. I have made the correction in this version.}

Apocryphally, ABV-packets are rather difficult to calculate; see \cite{BST}*{Introduction}, for example. 
 In \cite{CFZ:unipotent} and \cite{CFZ:cubics} we found the ABV-packets for all unipotent representations of the exceptional group $G_2$, building on techniques developed in \cite{CFMMX}, where we also calculated over 50 examples of ABV-packets for split classical groups and their pure inner forms. In this paper we continue our exploration by finding a curious ABV-packet for $\GL_{16}(F)$, using a method that lends itself to algorithmic implementation and symbolic computation.

%Vogan's conjecture, revisited in \cite{CFMMX}*{Conjecture 1}, proposes that if $\phi$ is of Arthur type $\psi$ then $\Pi_\phi^\ABV(G(F)) = \Pi_\psi(G(F))$.

\subsection{Main result}

Since all A-packets for general linear groups are L-packets \cite{Arthur:unipotent-motivation}, and since all L-packets for general linear groups are singletons, and since ABV-packets are expected to generalize A-packets \cite{CFMMX}, it is reasonable to ask:
%\begin{quotation} 
{\it Are all ABV-packets singletons for $p$-adic general linear groups?} 
%\end{quotation}
In this paper we show that the answer is no by exhibiting an unramified Langlands parameter $\phi_\KS$ for $\GL_{16}(F)$ for which the ABV-packet has exactly two representations. 
More precisely, we find two unipotent, non-tempered, irreducible representations, $\pi_\KS$ and $\pi_\psi$, such that
\begin{equation}\label{eqn:intro-main}
\Pi^\ABV_{\phi_\KS}(\GL_{16}(F)) = \{ \pi_\KS, \pi_\psi \},
\end{equation}
where $\phi_\KS$ is the Langlands parameter for $\pi_\KS$. 
We show that $\pi_\KS$ is not of Arthur type while its coronal representation, $\pi_\psi$, is of Arthur type.
The theory of ABV-packets matches the representations $\pi_\KS$ and $\pi_\psi$ with constant and quadratic characters, respectively, of a group $A^\ABV_{\phi_\KS}$, studied in Section~\ref{ssec:generic}.

After extensive searching in lower rank general linear groups, we expect that $\pi_\KS$ is the simplest example of an admissible representation of a $p$-adic general linear group with a corona.
Using the results of this paper, it follows that there are coronal representations, or equivalently, non-singleton ABV-packets, of $\GL_n(F)$ for all $n\geq16$; this can be done by manufacturing an infinitesimal parameter $\lambda$ for $\GL_n(F)$ for which the moduli space $V_\lambda$ is precisely the one studied in this paper for $\GL_{16}$. 
Many examples of coronal representations for orthogonal groups appear in \cite{CFMMX}, starting with $\SO_5(F)$, and for the exceptional group $G_2(F)$ in \cite{CFZ:cubics} and \cite{CFZ:unipotent}.

\subsection{Geometry}

The main result, above, is a consequence of a geometric calculation at the heart of this paper.
Before describing that calculation, we begin by recalling a fact from geometry.
Let $V$ be a prehomogenous vector space, that is, let $V$ be a finite-dimensional vector space over a field, equipped with the action of an affine algebraic group $H$ for which there is a dense Zariski-open $H$-orbit. Now let $\Lambda$ be the conormal variety for $V$ with this action, which is to say, set
\[
\Lambda \ceq \{ (x,y)\in V\times V^* \tq [x,y] =0\},
\]
where $[\ ,\ ] : V\times V^*\to \Lie H$ is the moment map for $V$ with $H$-action.
For every $x\in V$, the fibre $\Lambda_x$ of the projection $\Lambda \to V$ is also a finite-dimensional vector space, equipped with the natural action of the affine algebraic group $Z_{H}(x)$, so it is natural to ask: Does $\Lambda_x$ have a dense Zariski-open $Z_{H}(x)$-orbit, or in other words, {\it is $\Lambda$ a bundle of prehomogeneous vector spaces?} 
In this paper we show that even when $V$ is the representation variety of a quiver of type $A_5$, the answer is no.
This paper is largely an exploration of the consequence of this fact for $p$-adic representation theory, as it relates directly to Arthur parameters and A-packets.

The geometric calculation at the heart of this paper concerns the moduli space $X$ of Langlands parameters with a common infinitesimal parameter, $\lambda$, in the sense of Section~\ref{ssec:V}. 
We find a prehomogeneous vector space $V$ with $H\ceq Z_{\dualgroup{G}}(\lambda)$-action such that 
\[
X = \dualgroup{G}\times_{H} V
\]
and we find the elements $x_\psi, x_\KS\in V$ that correspond to the Langlands parameters $\phi_\psi$ and $\phi_\KS$, respectively; again see Section~\ref{ssec:V}. 
In this paper we find that $\Lambda$, the conormal bundle to $V$, is not a bundle of prehomogenous vector spaces by showing that $\Lambda_{x_\KS}$ is not a prehomogeneous vector space. % for the $Z_{H}(x_\KS)$-action.

We make a fairly detailed study of the $H$-orbits $C_\psi$ of $x_\psi$ and $C_\KS$ of $x_\KS$. 
We find that $C_\psi$ has co-dimension $8$ in $V$ and that $C_\KS$ has co-dimension $8$ in the closure $\overline{C}_\psi$ of $C_\psi$ in $V$.
We show that the conormal bundle
\[
\Lambda_{C_\KS} = \{ (x,y)\in V\times V^* \tq x\in C_\KS,\ [x,y]=0 \}
\]
has no open $H$-orbit.
We use this fact to conclude that $\phi_\KS$ is not of Arthur type. 

In fact, we find a connected, dense open $H$-stable sub-bundle $\Lambda^\text{gen}_{C_\KS} \subset \Lambda_{C_\KS}$ whose equivariant fundamental group, $A^\ABV_{\phi_\KS}$, is non-trivial. 
This is another surprise, since the groups $A^\ABV_{\phi}$ are trivial for all Langlands parameters $\phi$ of Arthur type for general linear groups. For classical groups, it is easy to find examples when the group $A^\ABV_{\phi}$ is non-trivial; many appear in \cite{CFMMX}*{Part II}.

The main geometric result of this paper refers to the functor 
\[
\Evs_{C_\KS} : \Perv_H(V) \to \Loc_{H}(\Lambda^\text{gen}_{C_\KS})
\]
which is a special case of the functor defined in \cite{CFMMX}*{Section 7.9}. 
Theorem~\ref{thm:geometric}, implies that the rank of the local system $\Evs_{C_\KS} \IC(\1_{C})$ is $1$ for $C= C_\KS$ and $C= C_\psi$ and otherwise the local system $\Evs_{C_\KS} \IC(\1_{C})$ is $0$.
In fact, Theorem~\ref{thm:geometric} gives more information, since we find the local systems  $\Evs_{C_\KS} \IC(\1_{C_\KS})$ and $ \Evs_{C_\KS} \IC(\1_{C_\psi})$: the former is constant, the latter is quadratic.
Theorem~\ref{thm:geometric} implies Corollary~\ref{cor:main}.

This paper is almost entirely an exploration of the consequences for representation theory of geometric facts: not only are the characteristic cycles for conormal bundles of quiver representation varieties of type $A_n$ sometimes {\it reducible}, contrary to a ``hope'' of Lusztig appearing in \cite{Lusztig:Quivers}*{13.7}, but the microlocal equivariant fundamental groups of these bundles are sometimes {\it non-trivial} and, moreover, the microlocal vanishing cycles of equivariant perverse sheaves on these quiver representation varieties are sometimes non-constant, corresponding to {\it non-trivial representations} of these fundamental groups. 
%This paper rests on an example of each of these surprises.

\subsection{Consequences}

The singularity we are studying was introduced in \cite[Section 7]{KS}.
In \cite[Theorem 7.2.2]{KS}, it is shown that structure of the singularity is fundamentally the
same as the singularity which other sources, such as  \cite{BST}*{Section 3.4} and \cite{W}, refer to as the Kashiwara-Saito singularity.
Specifically, a transverse slice through $\overline{C}_\psi$ at $x_\KS$ reveals the Kashiwara-Saito singularity in the moduli space of Langlands parameters with the same infinitesimal parameter as $\psi$.

It follows from Theorem~\ref{thm:geometric} that the characteristic cycles of the equivariant perverse sheaf $\IC(\1_{C_\psi})$ contains ${\Lambda_{C_\KS}}$ and ${\Lambda_{C_\psi}}$.
That result is proved in \cite{KS}*{Theorem 7.2.1} where it is used to provide a counter example to a conjecture of Lusztig \cite{Lusztig:Quivers}*{13.7}.
Our proof of Theorem~\ref{thm:geometric} gives an independent proof of \cite{KS}*{Theorem 7.2.1} using techniques developed here and in \cite{CFMMX}.
Moreover, Theorem~\ref{thm:geometric} is stronger than \cite{KS}*{Theorem 7.2.1}, since the latter does not establish that the multiplicities of ${\Lambda_{C_\KS}}$ and ${\Lambda_{C_\psi}}$ in the characteristic cycles of $\IC(\1_{C_\psi})$ are both $1$, which also follows from Theorem~\ref{thm:geometric}.
See Remark~\ref{rem:KS} for more on the relation between Theorem~\ref{thm:geometric} and \cite{KS}*{Theorem 7.2.1}.

The main result of this paper also has the following interpretation related to another conjecture by Vogan. %regarding a basis for the space of stable distributions on $G(F)$.
To state this conjecture, we must build distributions on $G(F)$ from ABV-packets.
These distributions are defined using a function  $\Pi^\ABV_\phi(G(F))\to \Rep(A^\ABV_\phi)$ that attaches irreducible representations of a finite group $A^\ABV_\phi$ attached to the Langlands parameter $\phi$.
The group $A^\ABV_\phi$ is the equivariant fundamental group of an open, dense variety in the conormal bundle above $\phi$ in the moduli space of Langlands parameters.
 % and it comes equipped with a surjective group homomorphism $\mathcal{S}^\ABV_\phi \to A^\ABV_\phi$ where $\mathcal{S}^\ABV_\phi$ is a subgroup of $\dualgroup{G}$. Each $s\in \mathcal{S}^\ABV_\phi$ determines an endoscopic group for $G$.
Using this, in \cite{CFMMX} we define {\it packet coefficients} $\langle a, \pi\rangle$, for $a\in A^\ABV_\phi$ and $\pi\in \Pi^\ABV_\phi(G(F))$.
Consider the distribution
\begin{equation}\label{eqn:intro-Theta}
\Theta_{\phi} \ceq e(G) \sum_{\pi\in \Pi^\ABV_\phi(G(F))} (-1)^{\dim(\phi)-\dim(\phi_\pi)} \trace \langle 1 , \pi\rangle\ \Theta_\pi,
\end{equation}
%\todo{It looks weird to consider $\rank \langle - , \pi\rangle$ even if this makes sense as $\langle - , \pi\rangle$ is a representation. At first it looked like we were making a claim about all $a$}
where $e(G)$ is the Kottwitz invariant of the group $G$ and $\dim(\phi_\pi)-\dim(\phi)$ is the relative dimension of the orbit of $\phi$ in the closure of the orbit of the Langlands parameter $\phi_\pi$ for $\pi$ in the moduli space of Langlands parameters; see \cite{CFMMX} and \cite{CFZ:cubics} and \cite{CFZ:unipotent} for the theory and examples of these distributions.
Vogan's \cite{Vogan:Langlands}*{Conjecture 8.15'}, revisited in \cite{CFMMX}*{Conjecture 2}, posits that the distributions $\Theta_{\phi}$ are stable and, moreover, form a basis for the space of stable distributions on $G(F)$, letting $\phi$ run over conjugacy classes of Langlands parameters.
It follows from the main result of this paper, Corollary~\ref{cor:main}, that
\begin{equation}\label{eqn:intro-Theta-2}
\Theta_{\phi_\KS} = \Theta_{\pi_\KS} + \Theta_{\pi_\psi}.
\end{equation}
%since $\dim C_\KS - \dim C_\psi$ is even; see \eqref{eqn:intro-Theta}.

Since $\Theta_{\pi_\KS}$ and $\Theta_{\pi_\psi} $ are stable distributions on $\GL_{16}(F)$, this means that {\it Vogan's putative basis for the vector space of stable distributions is not the same as the basis given by characters of irreducible admissible representations, for general linear groups}. This does not in any way contradict Vogan's conjecture. We further remark that, since $\Theta_{\phi_\psi} = \Theta_{\pi_\psi} $, the transition matrix comparing Vogan's basis for this part of the vector space of stable distributions is unipotent.

\subsection*{Relation to the Archimedean case}
For the Archimedean fields $F$, the theory of ABV-packets was developed in \cite{ABV} and \cite{Vogan:Langlands}.
We are grateful to the authors of \cite{BST} for pointing out to us that $\GL_8(F)$ admits an ABV-packet with two representations and, consequently, $\GL_n(F)$ admits non-singleton ABV-packets for all $n\geq8$.
As in the present work, the genesis of the ABV-packet with two representations for $\GL_8(F)$ is the Kashiwara-Saito singularity in a variety that appears in \cite{ABV} and \cite{Vogan:Langlands};
this variety is not a moduli space of Langlands parameters. 
Even when we pass to the case the non-singleton packets for $\GL_{16}(F)$, it is not clear to the authors of this paper how, or if, the representations of these Archimedean groups are related to the representations of the $p$-adic $\GL_{16}$ appearing in this paper.

\subsection*{Acknowledgements}
We thank Reginald Lybbert for establishing the foundation for some of the computations in this paper during an undergraduate summer research project (NSERC USRA) with the first author. 
We thank Sarah Dijols, Mishty Ray, Elijah Thompson, Geoff Vooys, Bin Xu and Qing Zhang for helpful conversations and all the other members of the \href{http://www.automorphic.ca}{Voganish Project} for listening to numerous seminars while we were working out this material. 

\section{The Kashiwara-Saito representation of $\GL_{16}(F)$}\label{sec:statement-main}

In this section we introduce the Kashiwara-Saito representation $\pi_\KS$ and its coronal representation $\pi_\psi$. We give the standard modules for these two representations, as well as their {\it L}-parameters. While both $\pi_\KS$ and $\pi_\psi$ are unipotent representations, neither are tempered and only $\pi_\psi$ is of Arthur type, as we will see in Section~\ref{sec:KSisnotArthur}.

\subsection{A self-dual representation of $\GL_{16}(F)$ of Arthur type }\label{ssec:psi}

Let $F$ be a $p$-adic field and consider the Arthur parameter for $\GL_8(F)$ defined by
\[
\begin{array}{rcl}
\psi_0 : W_F \times \SL_{2}(\CC) \times \SL_{2}(\CC) &\mathop{\longrightarrow}\limits^{\psi_0}& \GL_{8}(\CC)\\
(w,x,y) &\mapsto& \Sym^3(x)\otimes \Sym^1(y),
\end{array}
\]
where $\Sym^n$ is the irreducible representation of $\SL_{2}(\CC)$ of dimension $n+1$.
Let $\phi_0$ be the Langlands parameter defined by $\phi_{0}(w,x)\ceq \psi_0(w,x,\diag(\abs{w}^{1/2},\abs{w}^{-1/2}))$, so
\[
%\begin{array}{rcl}
%\phi_0 : W_F \times \SL_{2}(\CC) &\mathop{\longrightarrow}\limits^{\psi_0}& \GL_{8}(\CC)\\
\phi_0(w,x) = \abs{w}^{1/2}\Sym^3(x) \oplus  \abs{w}^{-1/2}\Sym^3(x).
%\end{array}
\]
\iffalse%%%%
Let $\pi_0$ be the irreducible admissible representation of $\GL_{8}(F)$ with Langlands parameter $\phi_0$. 
We can describe $\pi_0$ as follows.
Let $M_0 \ceq \GL_{4}\times \GL_{4}$ and let $\sigma_0$ be the representation of $M_0(F)$ 
\[
\sigma_0 \ceq \left(\abs{ \det}^{1/2}\otimes\St_{\GL_4(F)}\right) \boxtimes \left(\abs{\det}^{-1/2}\otimes\St_{\GL_4(F)}\right),
\]
where $\St_{\GL_n(F)}$ is the Steinberg representation of $\GL_n(F)$.
Then $\sigma_0$ is in the positive Weyl chamber and the induced representation 
\[
\Ind_{P_0(F)}^{\GL_{8}(F)}(\sigma_0),
\]
 where $P_0$ is the standard parabolic of $\GL_{8}$ with Levi subgroup $M_0$, is the standard module for $\pi_0$ \cite{Z2}*{Theorem 6.1}, so $\pi_0$ is the unique irreducible quotient of this standard module.
\fi%%%%
Now consider the dual Arthur parameter ${\hat \psi}_0$ for $\GL_8(F)$ defined by
\[
%\begin{array}{rcl}
%{\hat \psi}_0 : W_F \times \SL_{2}(\CC) \times \SL_{2}(\CC) &\mathop{\longrightarrow}\limits^{\psi_0}& \GL_{8}(\CC)\\
{\hat \psi}_0(w,x,y) = \Sym^1(x)\otimes \Sym^3(y).
%\end{array}
\]
The Langlands parameter for ${\hat \psi}_0$ is 
\[
\begin{array}{rcl}
%{\hat \phi}_0 : W_F \times \SL_{2}(\CC) &\mathop{\longrightarrow}\limits^{\psi_0}& \GL_{8}(\CC)\\
{\hat \phi}_0(w,x) &=& \abs{w}^{3/2} \Sym^1(x) \oplus \abs{w}^{1/2} \Sym^1(x)\\
& & \oplus\ \abs{w}^{-1/2} \Sym^1(x) \oplus \abs{w}^{-3/2} \Sym^1(x).
\end{array}
\]
\iffalse%%%%
Let ${\hat \pi}_0$ be the irreducible admissible representations of $\GL_{8}(F)$ with Langlands parameter ${\hat \phi}_0$.
Again, we can describe ${\hat \pi}_0$ as follows.
Set ${\hat M}_0 := \GL_{2}\times \GL_{2} \times \GL_{2}\times \GL_{2}$ and 
\[
\begin{array}{rcl}
{\hat \sigma}_0 &\ceq& \left(\abs{\det}^{3/2}\otimes\St_{\GL_2(F)}\right)\boxtimes\left(\abs{\det}^{1/2}\otimes\St_{\GL_2(F)}\right)\\
&& \boxtimes\left( \abs{\det}^{-1/2}\otimes\St_{\GL_2(F)}\right)\boxtimes\left(\abs{\det}^{-3/2}\otimes\St_{\GL_2(F)}\right).
\end{array}
\]
Then ${\hat\pi}_0$ is the unique irreducible quotient of the standard module 
\[
\Ind_{{\hat P}_0(F)}^{\GL_{8}(F)}({\hat \sigma}_0),
\]
 where ${\hat P}_0$ is the standard parabolic of $\GL_{8}$ with Levi subgroup ${\hat M}_0$.
The representations $\pi_0$ and ${\hat\pi}_0$ are dual in the sense of \cite{Z2}.% and \cite{A}.
\fi%%%

Now let $\psi$ be the Arthur parameter for $\GL_{16}(F)$ defined by
$
\psi = r \circ (\psi_0\boxtimes{\hat\psi}_0),
$
where $r : \GL_{8}\times\GL_{8} \to \GL_{16}$ is a natural diagonal block representation; we can also write
$
\psi = \psi_0\oplus {\hat\psi}_0,
$
so
\[
\psi(w,x,y) = \Sym^3(x)\otimes\Sym^1(y) \oplus \Sym^1(x)\otimes\Sym^3(y).
\]
Then the Langlands parameter $\phi_\psi$ for $\psi$ is 
$
\phi_\psi = \phi_0 \oplus {\hat \phi}_0.
$
so
\[
\begin{array}{rcl}
\phi_\psi(w,x) &=& \abs{w}^{3/2}\Sym^1(x) \oplus \abs{w}^{1/2}\Sym^3(x) \oplus \abs{w}^{1/2}\Sym^1(x)\\
&&\oplus\ \abs{w}^{-1/2}\Sym^1(x)\oplus \abs{w}^{-1/2}\Sym^3(x) \oplus \abs{w}^{-3/2}\Sym^1(x).
\end{array}
\]
Let $\pi_\psi$ be the irreducible admissible representation of $\GL_{16}(F)$ with Langlands parameter $\phi_\psi$.

To describe the standard module for $\pi_\psi$, set 
\[
M_\psi \ceq\GL_{2}\times\GL_{4}\times\GL_{2}\times\GL_{2}\times\GL_{4}\times\GL_{2}
\]
and set
\[
\begin{array}{rcl}
\sigma_\psi&\ceq&
\left(\abs{\det}^{3/2} \St_{\GL_2(F)}\right)\oplus
\left(\abs{\det}^{1/2} \St_{\GL_4(F)}\right)\\
&& \oplus\
\left(\abs{\det}^{1/2} \St_{\GL_2(F)}\right)\oplus
\left(\abs{\det}^{-1/2} \St_{\GL_2(F)}\right)\\
&& \oplus\
\left(\abs{\det}^{-1/2} \St_{\GL_4(F)}\right)\oplus
\left(\abs{\det}^{-3/2} \St_{\GL_2(F)}\right).
\end{array}
\]
Let $P_\psi$ be the standard parabolic subgroup of $\GL_{16}$ with Levi subgroup $M_\psi$. 
Then $\pi_\psi$ is the unique irreducible quotient of  
\[
\Ind_{P_\psi(F)}^{\GL_{16}(F)} (\sigma_\psi).
\]
Note that, by construction, the irreducible admissible representation $\pi_\psi$ is of Arthur type and equivalent to its Zelevinsky dual.

\subsection{The Kashiwara-Saito representation of $\GL_{16}(F)$}\label{ssec:KS}

\iffalse%%%%
Let $P_1$ be the standard parabolic subgroup of $\GL_{8}$ with Levi subgroup $M_1 := \GL_{3}\times \GL_{2}\times\GL_{2} \times \GL_{1}$ and set
\[
\begin{array}{rcl}
\sigma_1 &\ceq& 
\St_{\GL_3(F)}\boxtimes\left(\abs{\det}^{3/2}\otimes\St_{\GL_2(F)}\right)\\
&&\boxtimes\ \left(\abs{\det}^{-3/2}\otimes\St_{\GL_2(F)}\right)\boxtimes \St_{\GL_1(F)}.
\end{array}
\]
Let $\pi_1$ be the Langlands quotient of the standard module $\Ind_{P_1(F)}^{\GL_{8}(F)}(\sigma_1)$.
The Langlands parameter for $\pi_1$ is
\fi%%%
Consider the Langlands parameter $\phi_1$ for $\GL_8(F)$ defined by
\[
\phi_1(w,x) \ceq \Sym^2(x) \oplus \abs{w}^{3/2}\Sym^1(x) \oplus \abs{w}^{-3/2}\Sym^1(x) \oplus \Sym^0(x).
\]
We apologize for writing $\Sym^0(x)$ for the trivial representation.
%We remark that $\pi_1$ is equivalent to its Zelevinsky dual.
%This can be seen by computing the dual ${\hat m}_1 $ of the multisegment $m_1$ and verifying ${\hat m}_1 = m_1$.
%
Set 
\[
\phi_\KS \ceq r\circ (\phi_1\boxtimes\phi_1) = \phi_1 \oplus \phi_1,
\]
for $ : \GL_8\times \GL_8\to \GL_{16}$.
Thus,
\[
\begin{array}{rcl}
\phi_\KS(w,x) 
&=& \oplus \abs{w}^{3/2}\left(\Sym^1(x)\oplus\Sym^1(x)\right)\\
&&\oplus\ 
\left(\Sym^2(x)\oplus \Sym^2(x)\right) \oplus (\Sym^0(x)\oplus\Sym^0(x))\\
&& \oplus\
\abs{w}^{-3/2}(\Sym^1(x)\oplus\Sym^1(x)).
\end{array}
\]
Let $\pi_\KS$ be the irreducible admissible representation of $\GL_{16}(F)$ with this Langlands parameter.
We refer to $\pi_\KS$ as the {\it Kashiwara-Saito representation} of $\GL_{16}(F)$; this representation is self-dual.

We find the standard module for $\pi_\KS$.
Consider the representation $\sigma_\KS$ of 
\[
M_\KS = (\GL_2\times\GL_2)\times(\GL_3\times\GL_3)\times(\GL_1\times\GL_1)\times(\GL_2\times\GL_2)
\]
defined by
\[
\begin{array}{rcl}
\sigma_\KS &\ceq&
\abs{\det}^{3/2}\left(\St_{\GL_2(F)}\oplus\St_{\GL_2(F)}\right)\\
&& \oplus\
\left(\St_{\GL_3(F)}\oplus\St_{\GL_3(F)}\right)\oplus\left(\St_{\GL_1(F)}\boxtimes\St_{\GL_1(F)}\right)\\
&& \oplus\
\abs{\det}^{-3/2} \left(\St_{\GL_2(F)}\oplus\St_{\GL_2(F)}\right).
\end{array}
\]
%Each induced representation above is tempered then twisted by an unramified character in the  positive Weyl chamber for the standard parabolic.
%Then $\sigma_\KS$ is tempered and lies in the positive Weyl chamber for the standard parabolic with Levi subgroup  $M_\KS(F)$ where $M_\KS = \GL_{4}\times \GL_{8}\times \GL_{4}$.
The standard module for $\pi_\KS$ is 
\[
\Ind_{P_\KS(F)}^{\GL_{16}(F)} (\sigma_\KS),
\]
where $P_\KS$ is the standard parabolic with Levi subgroup $M_\KS$.

\iffalse%%%
This Langlands parameter is {\it not} of Arthur type.
%
In this paper we deduce that $\phi_\KS$ is not of Arthur type from Lemma~\ref{prop:codim2}. \todo[inline]{Someone is going to ask for a proof of the next statement and I don't know it. Andrew does, however.} As a remark, we mention here that this fact can also be seen directly from the multisegment $m_\KS$: if a multisegment $m$ is of Arthur type then it must be a union of segments all of the same length, starting at consecutive numbers and, clearly, $m_\KS :=  \{ 2[-1,1],2[1,2],2[-2,-1],2[0]\}$ does not enjoy that property.
\fi%%%

\subsection{Multisegments}

%\todo[inline]{Insert comments about multisegments as they appear in \cite{Z2} vs here}
%The multisegments that appear in this paper are a special case of Zelevinsky's multisegments, as we briefly recall. 
Following \cite{Z2}*{Section 3.1}, a segment is a finite set 
\[
[\rho,\nu^k\rho]\ceq \{ \rho, \nu\rho,\nu^2\rho, \ldots , \nu^k\rho\},
\]
where $\rho$ is an (equivalence class of) irreducible cuspidal representation of $\GL_n(F)$, for some positive integer $n$, and $\nu$ is the character of $\GL_n(F)$ defined by $\nu(g)=\abs{\det(g)}$.
A multisegment $\underline{m} = \{ \Delta_1,\Delta_2,\ldots,\Delta_r \} $, is a multiset of segments, without fixing $n$.
%which is to say, a collection of segments with auxiliary indexing information to distinguish apparent duplicates. 
As explained in \cite{Z2}*{Theorem 6.1}, there is a natural bijection between multisegments and (equivalence classes of) irreducible admissible representations of $\GL_n(F)$, allowing $n$ to range over all positive integers. 

%The segments we study in this paper are all one-dimensional, meaning we take $n=1$ in the construction above, so $\rho$ is trivial and $\nu(g)=\abs{g}$; equivalently, the representations we study in this paper are all unramified principal series representations. 
After adapting the theory by consistently replacing irreducible submodules with irreducible quotients, the bijection of \cite{Z2}*{Theorem 6.1} attaches the following multisegments to the representations appearing in Sections~\ref{ssec:psi} and \ref{ssec:KS}: 
\iffalse%%%%
the multisegment for $\pi_0$ is 
\[
\underline{m}_0 \ceq \{ [\nu^{-2},\nu^{1}], [\nu^{-1},\nu^{2}] \};
\]
the multisegment for ${\hat \pi}_0$ is 
\[
\hat{\underline{m}}_0 := \{ [\nu^{-2},\nu^{-1}], [\nu^{-1},\nu^{0}], [\nu^{0},\nu^{1}], [\nu^{1},\nu^{2}]  \},
\]
which is the dual, in the sense of \cite{Z2}, of the multisegment $\underline{m}_0$;
\fi%%%%%
the multisegment for $\pi_\psi$ is
\[
\underline{m}_\psi %= \underline{m}_0 \cup \underline{\hat m}_0
=  \{[\nu^{1},\nu^{2}], [\nu^{-1},\nu^{2}], [\nu^{0},\nu^{1}], [\nu^{-1},\nu^{0}],[\nu^{-2},\nu^{1}], [\nu^{-2},\nu^{-1}]\},
\]
and this is self-dual;
\iffalse%%%
the multisegment for $\pi_1$ is
\[
\underline{m}_1 :=  \{ [\nu^{-2},\nu^{-1}],[\nu^{-1},\nu^{1}],\{\nu\},\nu^{1},\nu^{2}]\};
\]
\fi%%%
the multisegment for $\pi_\KS$ is
\[
\underline{m}_\KS %= \underline{m}_1 \cup \underline{m}_1 
=  \{2[\nu^{1},\nu^{2}] ,  2[\nu^{-1},\nu^{1}], 2\{\nu^{0}\}, 2[\nu^{-2},\nu^{-1}] \},
\]
which is also self-dual.

\subsection{Moduli space of Langlands parameters}\label{ssec:V}

The infinitesimal parameters of $\pi_\psi$ and $\pi_\KS$, in the sense of \cite{CFMMX}*{Section 4.1}, are equal and henceforth denoted by $\lambda : W_F \to \GL_{16}(\CC)$, defined by
\[
\lambda(w) \ceq 2\abs{w}^{2} \oplus 4\abs{w}^{1} \oplus 4\abs{w}^{0} \oplus 4\abs{w}^{-1} \oplus 2\abs{w}^{-2},
\]
where the coefficients $2$, $4$, $4$, $4$ and $2$ denote multiplicities.
%The infinitesimal character of $\pi_\psi$ and $\pi_\KS$ is the unramified character $\chi$ of the maximal torus $T(F)$ in $\GL_{16}(F)$ matching $\lambda_{}$ under the Langlands correspondence. 
%
\iffalse
In particular, $\pi_\psi$ and $\pi_\KS$ are unramified principle series representations of $\GL_{16}(F)$ appearing in the induced representation 
\[
\pi_\lambda \ceq \Ind_{B(F)}^{\GL_{16}(F)} \chi 
\]
where $B$ is the standard Borel subgroup of $\GL_{16}$.
%
\fi
If we extend $\lambda$ trivially from $W_F$ to $W'_F$ and thus view $\lambda$ as a Langlands parameter for $\GL_{16}(F)$, the multisegment for $\lambda$ is
\[
\underline{m}_\lambda = 
\{ 2\{\nu^{2}\}, 4\{\nu^{1}\}, 4\{\nu^{0}\}, 4\{\nu^{-1}\}, 2\{\nu^{-2}\} \}.
\]

The calculation of $\Pi^\ABV_{\phi_\KS}(\GL_{16}(F))$ begins with the moduli space of Langlands parameters with infinitesimal parameter $\lambda$. We now describe that moduli space.
Following \cite{Vogan:Langlands}*{} and \cite{CFMMX}*{}, consider 
\[
V \ceq \{ x\in \gl_{16}(\CC) \tq \Ad(\lambda(w))x = \abs{w} x, \ \forall w\in W_F\}
\]
on which
\[
H := Z_{\GL_{16}(\CC)}(\lambda) \ceq \{ g\in \GL_{16}(\CC) \tq \lambda(w)g\lambda(w)^{-1} = x, \ \forall w\in W_F\}
\]
acts by conjugation in $\gl_{16}$.
Using \cite{CFMMX}*{Section 4.3}, we see that $V$ is a moduli space of unramified Langlands parameters $\phi : W'_F \to \GL_{16}(\CC)$ such that $\phi(w,\diag(\abs{w}^{1/2},\abs{w}^{-1/2})) = \lambda(w)$.  
Now
\[
X\ceq \GL_{16}(\CC)\times_{H} V
\]
is a moduli space of unramified Langlands parameters $\phi$ for $\GL_{16}(F)$ whose infinitesimal parameters $\GL_{16}(\CC)$-conjugate to $\lambda$.
Recall that {\it L}-parameters are $\GL_{16}(\CC)$-conjugacy classes of Langlands parameters and are therefore identified with $\GL_{16}(\CC)$-orbits $X$, or equivalently, with $H$-orbits in $V$.
Since the categories $\Perv_{\dualgroup{G}}(X)$ and $\Perv_{H}(V)$ are equivalent, as discussed in \cite{CFMMX}*{Section 4.5}, we henceforth dispense with $X$ and work exclusively with $V$.

We now describe $V$ in more detail.
Let $q$ be the cardinality of the residue field of $F$.
We fix an element $\Frob\in W_F$ such that $\abs{\Frob} = q$.
The semisimple element $\lambda(\Frob) \in \GL_{16}(\CC)$ has eigenvalues $q^{-2}, q^{-1}, q^{0}, q^{1}, q^{2}$ occurring with multiplicities $2,4,4,4,2$, respectively. 
We label these eigenvalues % in decreasing $q$-norm, as follows: 
 $\lambda_0\ceq q^{-2}$, $\lambda_{1} \ceq q^{-1}$, $\lambda_{2}\ceq q^{0}$, $\lambda_{3}\ceq q^{1}$ and $\lambda_{4}\ceq q^{2}$.
Using this convention,
\[
V = \left\{ 
\begin{pmatrix}  
0 & x_4 & 0 & 0 & 0 \\
0 & 0 & x_3 & 0 & 0 \\
0 & 0 & 0 & x_2 & 0 \\
0 & 0 & 0 & 0 & x_1 \\
0 & 0 & 0 & 0 & 0 
\end{pmatrix}
\ \Bigg\vert\ 
\begin{array}{l}
x_4\in \Mat_{2,4}(\CC) \\
x_3, x_2\in \Mat_{4,4}(\CC) \\
x_1\in \Mat_{4,2}(\CC)
\end{array}
\right\}.
\]
%\todo{Should we define $\Mat_{n,m}$, as our convention is not standard?}
If we write $E_{\lambda_i}$ for the eigenspace of $\lambda(\Frob)$ with eigenvalue $\lambda_i$, then
\[
V  = \Hom(E_{\lambda_3},E_{\lambda_4}) \times \Hom(E_{\lambda_2},E_{\lambda_3})\times  \Hom(E_{\lambda_1},E_{\lambda_2})\times \Hom(E_{\lambda_0},E_{\lambda_1}).
\]
In other words, $V$ is a representation variety for the quiver of type $A_5$:
\[
\begin{tikzcd}
\mathop{\bullet}\limits^{\lambda_0}  \arrow{r}{x_1} &  \mathop{\bullet}\limits^{\lambda_1}  \arrow{r}{x_2} &  \mathop{\bullet}\limits^{\lambda_2} \arrow{r}{x_3} & \mathop{\bullet}\limits^{\lambda_3} \arrow{r}{x_4} & \mathop{\bullet}\limits^{\lambda_4}.
\end{tikzcd}
\]
Consequently, for elements $x\in V$ we use the notation 
\[
x = (x_4,x_3,x_2,x_1) \in \Mat_{2,4}(\CC)\times \Mat_{4,4}(\CC)\times \Mat_{4,4}(\CC)\times \Mat_{4,2}(\CC).
\]
Similarly,
\[
H
= \Aut(E_{\lambda_4}) \times \Aut(E_{\lambda_3}) \times \Aut(E_{\lambda_2}) \times \Aut(E_{\lambda_1}) \times \Aut(E_{\lambda_0}).
\]
Let us use the notation
\[
h = (h_4,h_3,h_2,h_1,h_0) \in \GL_{2}(\CC) \times  \GL_{4}(\CC) \times \GL_{4}(\CC) \times \GL_{4}(\CC)\times \GL_{2}(\CC)
\]
for elements of $H$.
From this we see $\dim V = 48$ and $\dim H = 56$.
In this notation, the action $H\times V \to V$ is given by
\[
(h_{4},h_{3}, h_{2}, h_{1}, h_{0})\cdot(x_4, x_3,x_2,x_1) 
\ceq 
(h_4x_4h_3^{-1}, h_3x_3h_2^{-1}, h_2x_2h_1^{-1}, h_1x_1h_0^{-1} ).
\]
With this $H$-action, $V$ is a prehomogenous vector space; the open, dense $H$-orbit in $V$ is the set of $x\in V$ satisfying the conditions
\[ 
\begin{array}{llll}
\rank(x_4) =2, & \rank(x_3)=4, &\rank(x_2)=4, &\rank(x_1)=2,\\
\rank(x_4x_3) =2, & \rank(x_3x_2)=4, &\rank(x_2x_1)=2, & \\
\rank(x_4x_3x_2)=2, & \rank(x_3x_2x_1) =2, & & \\
\rank(x_4x_3x_2x_1)=2 .& & & 
\end{array}
\]

\subsection{{\it L}-parameters, rank triangles and multisegments}
\label{ssec:ranktriangles}

We refer to $\GL_{16}(\CC)$-conjugacy classes of Langlands parameters as $L$-parameter. 
By \cite{CFMMX}*{Section 4.3} we know that $L$-parameters with infinitesimal parameter $\lambda$ are in bijection with $H$-orbits in $V$. %\todo{We sortof said this, did we really \textit{see it}? \\ Better? -CC}
Recall that the open dense $H$-orbit in $V$ is determined by the ranks of the matrices $x_4$, $x_3$, $x_2$, $x_1$ and the ranks of all permitted products of these matrices.
More generally, every $H$-orbit $C$ in $V$ is determined by the values $r_{ij}$, for $1\leq j \leq i\leq 4$, where $r_{ii}\ceq \rank(x_i)$ and 
\[
r_{ij}\ceq\rank(x_i \circ x_{i-1} \circ \cdots \circ x_{j+1} \circ x_j), \qquad j<i,
\]
as $x$ ranges over $C$; these equations exactly describe $C$ as a variety. 
We arrange these ranks into a triangle to reflect the corresponding combinations of the $x_i$ and refer to this as a {\it rank triangle}:
%\iffalse%
\begin{center}
\begin{tikzpicture}
%first row
\node(A) at (0,0) {$m_4$};
\node(B) at (1,0) {$m_3$};
\node(C) at (2,0) {$m_2$};
\node(L) at (3,0) {$m_1$};
\node(M) at (4,0) {$m_0$};
%line under first row
\draw (0,-0.25) -- (4,-0.25);
%2nd row
\node(D) at (0.5,-0.5) {$r_{44}$};
\node(E) at (1.5,-0.5) {$r_{33}$};
\node(d) at (2.5,-0.5) {$r_{22}$};
\node(e) at (3.5,-0.5) {$r_{11}$};
%3rd row
\node(F) at (1,-1) {$r_{43}$};
\node(G) at (2,-1) {$r_{32}$};
\node(H) at (3,-1) {$r_{21}$};
%4th row
\node(I) at (1.5,-1.5) {$r_{42}$};
\node(J) at (2.5,-1.5) {$r_{31}$};
%5th row
\node(K) at (2,-2) {$r_{41}$};.
\end{tikzpicture}
\end{center}
%\fi%%%
From left to right, the values in the top row of the rank triangle correspond, respectively, to the multiplicities $m_i$ of the eigenvalues $\lambda_4,\lambda_3,\lambda_2,\lambda_1,$ and $\lambda_0$. 
The row below these eigenvalue multiplicities show ranks $r_{ii}$ of $x_i$, which are subject only to the condition that every $r_{ii}$ is less than or equal to the two eigenvalue multiplicities above it, so $r_{ij} \leq m_i$ and $r_{ij} \leq m_{i-1}$.
For $1\leq j<i \leq 4$, the ranks $r_{ij}$ are subject to the condition that every rank is less than or equal to the two ranks above it, so $r_{ij} \leq r_{i,j+1}$ and $r_{ij} \leq r_{i-1,j}$, also also the condition
\[
r_{ik} + r_{lj} \leq r_{ij} + r_{lk}, \qquad 1\leq j < k \leq l < i \leq 4.
\]
This last condition is a consequence of the Frobenius inequality from linear algebra: if $AB$, $BC$ and $ABC$ are defined then $\rank(AB) + \rank(BC) \leq \rank(ABC) + \rank(B)$.
%\todo{in regards to (3): They are correct, $C_{\KS}$ does not satisfy this. For example, take $l=3, i=4, k=1,j=2$. Then we get $0\leq -2$. I think it might be okay if we change it to say: $l<i, j<k$. But i'm not sure.\\ I've removed this condition because I don't think it's needed.}
The set of $H$-orbits in $V$ is naturally in bijection with rank triangles subject to these conditions.
Of the $1138$ $H$-orbits in $V$, we are mainly interested in $C_\KS$ and $C_\psi$; in Appendix~\ref{ssec:generalcomputations}, we find that $\dim C_\KS = 32$ and $\dim C_\psi=40$.

%We now recall the bijection between rank triangles and multisegments.
Let us say that the support of a multisegment is the multiset obtained by breaking each segment into a union of segments of length $1$.
We now explain how to pass from a rank triangle with eigenvalues for $\lambda_0 = q^{-2}$, $\lambda_1 = q^{-1}$, $\lambda_2 = q^{0}$, $\lambda_3 = q^{1}$ and $\lambda_4 = q^{2}$, with respective multiplicities (top row) $m_0=2$, $m_1=4$, $m_2=4$, $m_3=4$ and $m_4=2$ to a multisegment with support $\underline{m}_\lambda$. 
Define $e_i$ by $\lambda_i = q^{e_i}$; thus, $e_0=-2$, $e_1=-1$, $e_2=0$, $e_3=1$ and $e_4=2$.
%Since all the multisegments we consider are one-dimensional, 
 Henceforth, in this paper by a segment $\Delta$ we mean a (non-empty) consecutive sequence of integers
 $ \Delta = \{{j},{j+1},\ldots, {i-1},{i}\} =: [{j}, {i}]$, and by a multisegment $\underline{m}$ we mean a multiset of these segments.
To each single segment $\Delta = [e_{j},e_i]$ we associate the rank triangle $T_{\Delta}$ with
 \[ r_{l,k} = \begin{cases} 1, & j< k\leq l\leq i; \\ 0, & \text{otherwise} \end{cases} \qquad m_k = \begin{cases} 1, & j\leq k\leq i; \\ 0, & \text{otherwise}. \end{cases} \]
A multisegment $\underline{m}$ then determines the triangle 
 \[ T_{\underline{m}} = \sum_{\Delta\in \underline{m}} T_{\Delta}. \]
Conversely, every rank triangle determines a multisegment by the following inductive rule:
 \begin{enumerate}
 \item Set $\underline{m}:=\emptyset$.
  \item Let $r_{ij}$ be the lowest and most to the left non-zero entry in the rank triangle. Note that $j\leq i$.
  \item Add the segment $\Delta = [e_{j-1},e_i]$ to $\underline{m}$. 
  \item Decrease the quantity $r_{k l}$ by one for each value of $k,l$ with $j\leq l\leq k\leq i$.
  Decrease $m_k$ by $1$ for each $j-1\leq k\leq i$. That is, subtract $T_\Delta$ from the rank triangle.
  \item Repeat steps (ii) through (iv) until all $r_{ij}$ are zero.
  \item If any $m_k\ne 0$, add the singleton $\{ e_k\}$ with multiplicity $m_k$ to $\underline{m}$. 
  \end{enumerate}
 Now $\underline{m}$ is the multisegment determined by the rank triangle. These procedures, $\underline{m}\leftrightarrow  T_{\underline{m}}$ establish a bijection between rank triangles and multisegments.
 
Table~\ref{table:ranks} lists the rank triangles for the multisegments $\underline{m}_\psi$ and $\underline{m}_\KS$.
\begin{table}[tp]
\caption{Rank triangles for the $H$-orbits $C_\psi$ and $C_\KS$ in $V$ corresponding to the admissible representations $\pi_\psi$ and $\pi_\KS$.}
\label{table:ranks}
\begin{center}
$
\begin{array}{c}
C_\psi  \\
\begin{tikzpicture}
\node(A) at (0,0) {$2$};
\node(B) at (\rtwidth,0) {$4$};
\node(C) at (2*\rtwidth,0) {$4$};
\node(L) at (3*\rtwidth,0) {$4$};
\node(M) at (4*\rtwidth,0) {$2$};
%line under first row
\draw (0,-0.5*\rtheight) -- (4*\rtwidth,-0.5*\rtheight);
\node(D) at (0.5*\rtwidth,-\rtheight) {$2$};
\node(E) at (1.5*\rtwidth,-\rtheight) {$3$};
\node(d) at (2.5*\rtwidth,-\rtheight) {$3$};
\node(e) at (3.5*\rtwidth,-\rtheight) {$2$};
\node(F) at (\rtwidth,-2*\rtheight) {$1$};
\node(G) at (2*\rtwidth,-2*\rtheight) {$2$};
\node(H) at (3*\rtwidth,-2*\rtheight) {$1$};
\node(I) at (1.5*\rtwidth,-3*\rtheight) {$1$};
\node(J) at (2.5*\rtwidth,-3*\rtheight) {$1$};
\node(K) at (2*\rtwidth,-4*\rtheight) {$0$};
\end{tikzpicture}
\end{array}
\qquad
\begin{array}{c}
 C_\KS\\
 \begin{tikzpicture}
\node(A) at (0,0) {$2$};
\node(B) at (\rtwidth,0) {$4$};
\node(C) at (2*\rtwidth,0) {$4$};
\node(L) at (3*\rtwidth,0) {$4$};
\node(M) at (4*\rtwidth,0) {$2$};
%line under first row
\draw (0,-0.5*\rtheight) -- (4*\rtwidth,-0.5*\rtheight);
\node(D) at (0.5*\rtwidth,-\rtheight) {$2$};
\node(E) at (1.5*\rtwidth,-\rtheight) {$2$};
\node(d) at (2.5*\rtwidth,-\rtheight) {$2$};
\node(e) at (3.5*\rtwidth,-\rtheight) {$2$};
\node(F) at (\rtwidth,-2*\rtheight) {$0$};
\node(G) at (2*\rtwidth,-2*\rtheight) {$2$};
\node(H) at (3*\rtwidth,-2*\rtheight) {$0$};
\node(I) at (1.5*\rtwidth,-3*\rtheight) {$0$};
\node(J) at (2.5*\rtwidth,-3*\rtheight) {$0$};
\node(K) at (2*\rtwidth,-4*\rtheight) {$0$};
\end{tikzpicture}
\end{array}
$
\end{center}
\end{table}

The set of $H$-orbits in $V$ carries a partial order defined by the Zariski topology: $C\leq C'$ means $C \subseteq \overline{C'}$. This partial order can be read easily from the corresponding rank triangles as follows:
If the ranks for $C$ are $r_{ij}$ and $C'$ are $r'_{ij}$ then $C\leq C'$ if and only if $r_{ij}\leq r'_{ij}$ for all $i,j$.

\begin{remark}
In Section~\ref{sec:KSisnotArthur} we will show that $\phi_\KS$ is not of Arthur type.
Here we sketch a possible alternate argument: the bijection between $H$-orbits in $V$ and multisegments with the same support as $\underline{m}_\lambda$ can be used to deduce that the Langlands parameter $\phi_\KS$ is not Arthur type by directly inspecting the associated multisegment.
To see this, suppose $\phi_\KS$ is of Arthur type and suppose there is an Arthur parameter $\psi_\KS : W''_F \to \GL_{16}(\CC)$ of the form
\[ \psi_\KS(w,x,y) = \bigoplus_i \Sym^{a_i}(x)\otimes \Sym^{b_i}(y) \]
such that $\phi_\KS(w,x) = \psi_\KS(w,x,d_w)$ where $d_w = \diag(\abs{w}^{1/2},\abs{w}^{-1/2})$.
 We note that the Langlands parameter associated to $\Sym^{a}(x)\otimes \Sym^{b}(y)$ has multisegment of the form 
\begin{align*}
  \{ &\{-(a+b)/2+0,-(a+b)/2+1,\ldots,-(a+b)/2+a+0\},\\& \{-(a+b)/2+1,-(a+b)/2+2,\ldots,-(a+b)/2+a+1\},\\&\ldots,\\&\{-(a+b)/2+b,-(a+b)/2+b+1,\ldots,-(a+b)/2+a+b\}\}
\end{align*}
and so multisegments for Langlands parameters of Arthur type must be unions of multisegments of this shape.
By inspection, $\underline{m}_{\KS}$ is not of this sort, so $\psi_\KS$ does not exist.
%While the argument in Section~\ref{sec:KSisnotArthur} that $\phi_\KS$ is not of Arthur type is more complicated than the sketch above, the proof given there reveals properties of the moduli space $V$ that will be crucial for the main result of this paper, Corollary~\ref{cor:main}, which follows from Theorem~\ref{thm:geometric}.
\end{remark}

\subsection{Relation to the Kashiwara-Saito singularity}\label{ssec:KSsingularity}

Representations $\pi_\psi$ and $\pi_\KS$ are reverse-engineered from the Kashiwara-Saito singularity, as we now explain.
In \cite{KS}*{Section 7}, Kashiwara and Saito provide a counter-example to a conjecture articulated in \cite{Lusztig:Quivers}*{Section 13.7}. The counter-example concerns the quiver representation variety $V$ of type $A_5$ appearing in Section~\ref{ssec:V}. They specify two elements of $b,b'\in V$, both given explicitly. The element $b'$ satisfies the rank conditions for $C_\KS$, while the element $b$  satisfies the rank conditions for $C_\psi$; see \cite{KS}*{Lemma~7.2.2}.
We pick $x_\KS = (x_4,x_3,x_2,x_1)\in C_\KS$ given by 
\begin{equation}\label{xKS}
x_4 =
\begin{pmatrix}
0 & 1 
\end{pmatrix}, \quad
x_3 = 
\begin{pmatrix}
1 & 0\\
0 & 0
\end{pmatrix}, \quad
x_2 =
\begin{pmatrix}
0 & 1\\
0 & 0
\end{pmatrix}, \quad
x_1 =
\begin{pmatrix}
1 \\ 0
\end{pmatrix},
\end{equation}
where $1$ is the $2\times 2$ identity matrix, so it follows that $x_\KS$ is $H$-conjugate to $b'$.
The representations $\pi_\psi$ and $\pi_\KS$ are defined exactly so that their Langlands parameters correspond to the two orbits $C_\psi$ and $C_\KS$, respectively, in the quiver representation variety.
To see this, observe that $\phi_\KS(1,e)$ is a unipotent element of $\GL_{16}(\CC)$ and it lies in the unipotent orbit corresponding to the partition $16=3+2+2+1+3+2+2+1$, which is the same partition that classifies the nilpotent orbit of $\mathfrak{gl}_{16}(\CC)$ that $x_\KS$ occupies. 
Using \cite{CFMMX}*{Proposition 4.2}, it follows that $C_\KS$ is the $H$-conjugacy class of $V$ matching the $\GL_{16}(\CC)$-conjugacy class of $\phi_\KS$, which is the {\it L}-parameter for $\pi_\KS$.
In the same way, $\phi_\psi(1,e)$ is in the same conjugacy class as $\exp x_\psi$ as elements of $\mathfrak{gl}_{16}(\CC)$, classified by the partition $16=4+4+2+2+2+2$, so $C_\psi$ is the $H$-conjugacy class of $V$ matching the $\GL_{16}(\CC)$-conjugacy class of $\phi_\psi$, which is the {\it L}-parameter for $\pi_\psi$.

\iffalse%%%%
Let $s\in \GL_{16}(\CC)$ be a semisimple element of the form
\[
s = \diag(b_3, b_4, d_1, d_2, b_3, b_4, d_1, d_2, c_3, c_4, d_1, d_2, E_{\lambda_1}, E_{\lambda_2}, E_{\lambda_1}, E_{\lambda_2}))
\]
for which $\dim Z_{\GL_{16}(\CC)}(s)$ is minimal, which means all the listed variables are distinct; set $M = Z_{\GL_{16}(\CC)}(s)$. Then $M$ is a Levi subgroup of $\GL_{16}(s)$. Consider the intersection $V^M \ceq V \cap \Lie M$. Then 
\fi%%%%

\section{Geometry of the Kashiwara-Saito representation}\label{sec:KSisnotArthur}

\subsection{The conormal bundle}\label{ssec:Conormalbundle}

Using the Killing form for $\mathfrak{gl}_{16}(\CC)$, the dual variety $V^*$ may be realized as
\[
V^* \ceq \{ x\in \mathfrak{gl}_{16}(\CC) \tq \Ad(\lambda(\Frob))x = q^{-1} x \},
\]
which, in turn, is naturally identified with
\[
V^*  
= \Hom(E_{\lambda_1},E_{\lambda_0}) \times  \Hom(E_{\lambda_2},E_{\lambda_1})\times  \Hom(E_{\lambda_3},E_{\lambda_2})\times  \Hom(E_{\lambda_4},E_{\lambda_3}).
\]
Then $V^*$ is also a representation variety for the quiver of type $A_5$:
\[
\begin{tikzcd}
\mathop{\bullet}\limits^{\lambda_0} & \arrow[swap]{l}{y_1}  \mathop{\bullet}\limits^{\lambda_1} & \arrow[swap]{l}{y_2}  \mathop{\bullet}\limits^{\lambda_2} & \arrow[swap]{l}{y_3}  \mathop{\bullet}\limits^{\lambda_3} & \arrow[swap]{l}{y_4}  \mathop{\bullet}\limits^{\lambda_4}.
\end{tikzcd}
\]
We use the notation:
\[
y = (y_1,y_2,y_3,y_4) \in \Mat_{2,4}(\CC)\times \Mat_{4,4}(\CC)\times \Mat_{4,4}(\CC)\times \Mat_{4,2}(\CC) 
\]
for elements of $V^*$.
In this notation the action $H\times V^* \to V^*$ is given by
\[
(h_{4},h_{3}, h_{2}, h_{1}, h_{0})\cdot(y_1, y_2, y_3, y_4)  \ceq (h_0 y_1h_1^{-1}, h_1 y_2 h_2^{-1}, h_2 y_3 h_3^{-1}, h_3 y_4 h_4^{-1}).
\]
We note that $V^*$ is a prehomogeneous vector space for this action. 

The cotangent variety for $V$ is
\[
T^*(V) = V\times V^* \iso \left\{ 
\begin{pmatrix}  
0 & x_4 & 0 & 0 & 0 \\
y_4 & 0 & x_3 & 0 & 0 \\
0 & y_3 & 0 & x_2 & 0 \\
0 & 0 & y_2 & 0 & x_1 \\
0 & 0 & 0 & y_1 & 0 
\end{pmatrix}
\ \Bigg\vert\ 
\begin{array}{l}
x_4,y_1\in \Mat_{2,4}(\CC) \\
x_3, x_2, y_2,y_3 \in \Mat_{4,4}(\CC) \\
x_1, y_4\in \Mat_{4,2}(\CC)
\end{array}
\right\}.
\]
We use the following notation for elements of $T^*(V)$:
\begin{equation*}
(x,y) = (x_4,x_3,x_2,x_1,y_1,y_2,y_3,y_4) \in T^*(V),
\end{equation*}
with $x_i$ and $y_j$ as above.

The conormal variety, $\Lambda \subset T^*(V)$, is defined as the kernel of $[\ ,  ] : T^*(V) \to \Lie H $ given by
\begin{equation}\label{eqn:bracket}
[x, y] \ceq (x_4y_4, x_3 y_3 - y_4 x_4,  x_2 y_2 - y_3 x_3, x_1 y_1- y_2 x_2, -y_1x_1) .
\end{equation}
As the notation suggests, this is nothing more than the Lie bracket of $x$ and $y$ viewed as elements of $\gl_{16}(\CC)$.
Thus,
\[
\Lambda =
 \left\{ 
\begin{pmatrix}  
0 & x_4 & 0 & 0 & 0 \\
y_4 & 0 & x_3 & 0 & 0 \\
0 & y_3 & 0 & x_2 & 0 \\
0 & 0 & y_2 & 0 & x_1 \\
0 & 0 & 0 & y_1 & 0 
\end{pmatrix}
\in T^*(V)
\ \Bigg\vert\ 
\begin{array}{rl} 
x_4y_4 &= 0\\
x_3 y_3 &= y_4 x_4 \\
x_2 y_2 &= y_3 x_3 \\ 
x_1 y_1 &= y_2 x_2 \\ 
0 &= y_1x_1
\end{array} 
\right\}.
\]
Observe that $\Lambda$ is the quiver representation variety for
\[
\begin{tikzcd}
& 
%\arrow[loop left]{l}{0} 
\mathop{\bullet}\limits^{} \arrow[bend left=30]{r}{x_1} & \arrow[bend left=30]{l}{y_1} \mathop{\bullet}\limits^{} \arrow[bend left=30]{r}{x_2} & \arrow[bend left=30]{l}{y_2} \mathop{\bullet}\limits^{} \arrow[bend left=30]{r}{x_3} & \arrow[bend left=30]{l}{y_3} \mathop{\bullet}\limits^{} \arrow[bend left=30]{r}{x_4} & \arrow[bend left=30]{l} {y_4}\mathop{\bullet}\limits^{} 
%\arrow[loop right]{r}{0}
 & 
\end{tikzcd}
\]
with the relations specified above.

By \cite{CFMMX}*{Proposition 6.3} for each orbit $C$, 
\[ \Lambda_C \ceq \{ (x,y) \in \Lambda \ \vert\ x\in C \} \] 
is the conormal bundle to $C$.
For  $x\in C$, we write 
\[ \Lambda_x =   \{ y\in V^* \ \vert\  (x,y) \in \Lambda \} \]
 for the fibre of the bundle $\Lambda_C$.
By identifying $V^{**} \simeq V$, for $C^*$ an $H$-orbit in $V^*$ we may write
 \[ \Lambda_{C^*} = \{ (x,y) \in \Lambda \ \vert\ y\in C^* \}, \]
where $\Lambda_{C^\ast}$ is the conormal bundle to $C^*$.

\subsection{The Kashiwara-Saito representation is not of Arthur type}\label{ssec:KSisnotArthur}

Let $G$ be a connected, reductive algebraic group over a $p$-adic field $F$.
A Langlands parameter $\phi : W'_F\to \Lgroup{G}$ is said to be of Arthur type if there is an Arthur parameter $\psi : W''_F\to \Lgroup{G}$ such that $\phi(w,x)= \psi(w,x,d_w)$ where $d_w = \diag(\abs{w}^{1/2},\abs{w}^{-1/2})$. 
An irreducible admissible representation $\pi$ of $G(F)$ is said to be of Arthur type if there is an Arthur parameter $\psi : W''_F\to \Lgroup{G}$ such that $\pi\in \Pi_\psi(G(F))$.
For general linear groups $G$, since A-packets are {\it L}-packets, these two notions agree: $\pi$ is of Arthur type if and only if its Langlands parameter $\phi_\pi$ is of Arthur type.
In general, it is not true that the Langlands parameters of Arthur type representations are of Arthur type.
In fact, in  \cite{CFMMX} we provide an example of a representation of Arthur type whose Langlands parameter is not of Arthur type; see \cite{CFZ:unipotent}*{Remark 2.6} for an explanation of this example.

\begin{theorem}\label{thm:KSisnotArthur}
The conormal bundle $\Lambda_{C_\KS}$ does not have an open $H$-orbit and, 
consequently, the Kashiwara-Saito representation $\pi_\KS$ is not of Arthur type.
Its coronal representation, $\pi_\psi$, is of Arthur type.
\end{theorem}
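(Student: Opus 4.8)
The plan is to treat the three parts in turn, the middle-sized obstruction being the heart of the matter. I would dispose of the coronal claim immediately: $\pi_\psi$ was defined in Section~\ref{ssec:psi} to be the irreducible representation of $\GL_{16}(F)$ with Langlands parameter $\phi_\psi=\phi_0\oplus\hat\phi_0$, and $\phi_\psi(w,x)=\psi(w,x,d_w)$ for the Arthur parameter $\psi=\psi_0\oplus\hat\psi_0$ written down there; since for general linear groups the A-packet $\Pi_\psi$ is the singleton L-packet $\Pi_{\phi_\psi}$, we get $\pi_\psi\in\Pi_\psi(\GL_{16}(F))$, so $\pi_\psi$ is of Arthur type.

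Next I would reduce the Arthur-type statement for $\pi_\KS$ to the geometry, using the $\SL_2\times\SL_2$-structure of Arthur parameters. Recall (see \cite{CFMMX}) that if a Langlands parameter $\phi$ is of Arthur type, say $\phi(w,x)=\psi'(w,x,d_w)$, then the nilpotents supplied by the Deligne $\SL_2$ and the Arthur $\SL_2$ of $\psi'$ — which land in $V$ and in $V^*$ respectively — commute in $\gl_{16}(\CC)$, because the two $\SL_2$-factors centralize one another, and therefore form a point of the conormal bundle $\Lambda_{C_\phi}$ that lies in a dense open $H$-orbit; in particular $\Lambda_{C_\phi}$ has an open $H$-orbit. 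Consequently, once $\Lambda_{C_\KS}$ is shown to have no open $H$-orbit, $\phi_\KS$ is not of Arthur type, and since for general linear groups Arthur type of $\pi$ is equivalent to Arthur type of $\phi_\pi$, neither is $\pi_\KS$. It then remains to prove the first, geometric, assertion.

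To show $\Lambda_{C_\KS}$ has no open $H$-orbit, I would first reduce to a single fibre: by \cite{CFMMX}*{Proposition 6.3.1} the map $\Lambda_{C_\KS}\to C_\KS$ is an $H$-equivariant vector bundle, and since $C_\KS$ is one $H$-orbit we have $\Lambda_{C_\KS}\cong H\times_{Z_H(x_\KS)}\Lambda_{x_\KS}$; hence $\Lambda_{C_\KS}$ has an open $H$-orbit if and only if the fibre $\Lambda_{x_\KS}$ is a prehomogeneous vector space for $Z_H(x_\KS)$. The fibre $\Lambda_{x_\KS}$ is genuinely a linear subspace of $V^*$, because the bracket equations \eqref{eqn:bracket} are linear in $y$ once $x=x_\KS$ is fixed; one has $\dim\Lambda_{x_\KS}=\dim V-\dim C_\KS=16$ and $\dim Z_H(x_\KS)=\dim H-\dim C_\KS=24$. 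I would then: (i) solve \eqref{eqn:bracket} with $x$ the explicit element $x_\KS$ of \eqref{xKS}, writing $\Lambda_{x_\KS}$ out concretely; (ii) solve $h_4x_4=x_4h_3$, $h_3x_3=x_3h_2$, $h_2x_2=x_2h_1$, $h_1x_1=x_1h_0$ for these $x_i$, writing $Z_H(x_\KS)$ and $\Lie Z_H(x_\KS)$ out concretely; and (iii) show that for \emph{every} $y\in\Lambda_{x_\KS}$ the infinitesimal action map $\Lie Z_H(x_\KS)\to\Lambda_{x_\KS}$, $\xi\mapsto[\xi,y]$ — which lands in $\Lambda_{x_\KS}$ by the Jacobi identity and $[x_\KS,y]=0$ — has rank at most $15$, so that no $Z_H(x_\KS)$-orbit in the $16$-dimensional space $\Lambda_{x_\KS}$ is dense. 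The cleanest packaging of (iii), and the way I would present it, is to exhibit an explicit non-constant $Z_H(x_\KS)$-invariant rational function on $\Lambda_{x_\KS}$: a cross-ratio-type modulus of the lines that move in the family $\Lambda_{x_\KS}$, which is the quiver-theoretic incarnation of the Kashiwara--Saito modulus from \cite{KS}.

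Steps (i) and (ii) are bulky but routine linear algebra with the explicit matrices \eqref{xKS}. The main obstacle is (iii). A naive dimension count is by itself inconclusive — it only forces a hypothetical dense orbit to have $8$-dimensional generic stabilizer, which must still be excluded — so what I would actually prove is the uniform bound $\rank(\xi\mapsto[\xi,y])\le 15$ for all $y\in\Lambda_{x_\KS}$, equivalently the existence of a non-constant $Z_H(x_\KS)$-invariant rational function on $\Lambda_{x_\KS}$. Exhibiting such a function and verifying both that it is invariant and that it is non-constant is the step where the Kashiwara--Saito phenomenon is essential, and it is precisely what distinguishes $C_\KS$ from the other $H$-orbits in $V$.
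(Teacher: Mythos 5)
Your proposal is correct and follows essentially the same route as the paper: you dispose of $\pi_\psi$ by construction, you invoke the fact (\cite{CFMMX}*{Proposition 6.6.2}) that an Arthur parameter produces a strongly regular pair $(x,y)$ whose $H$-orbit is dense in $\Lambda_{C_\phi}$, and you reduce the geometric obstruction to showing that the fibre $\Lambda_{x_\KS}$ is not a prehomogeneous vector space for $Z_H(x_\KS)$ by exhibiting a non-constant invariant. This is exactly what the paper does in Lemmas~\ref{lemma:abcd1} and~\ref{lemma:notPHV} and Proposition~\ref{prop:codim2}: after parametrizing $\Lambda'_{x_\KS}$ by quadruples $(a,b,c,d)\in\Mat_{2,2}(\CC)^{\times 4}$ and writing out $Z_H(x_\KS)$, the invariant you gesture at is realized concretely as the $\GL_2$-conjugacy class $[(ac)^{-1}(bd)]$, which bounds every orbit codimension below by $2$ (you only need $\geq 1$) and simultaneously supplies the map $q$ used in the later study of $\Lambda^{\mathrm{gen}}_{C_\KS}$.
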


\begin{proof}
It is clear that $\pi_\psi$ is of Arthur type since, in Section~\ref{ssec:psi}, we exhibited an Arthur parameter $\psi$ whose Langlands parameter $\phi_\psi$ is the Langlands parameter for $\pi_\psi$.
We will prove that $\pi_\KS$ is not of Arthur type by contradiction.

First we must recall some notions. 
For every $H$-orbit $C$ in $V$, define
\begin{equation}
\Lambda^\text{reg}_{C} \ceq \Lambda_{C} \setminus \mathop{\bigcup}\limits_{C'< C}\overline{\Lambda_{C'}}.
\end{equation}
If $(x,y)\in \Lambda^\text{reg}_{C}$ we say $y$ is a regular conormal vector to $x\in C$.
In \cite{CFMMX}*{Section 6.5} we introduce the variety $\Lambda^\text{sreg}$ of {\it strongly regular} elements of $\Lambda$ defined by the property: $(x,y)\in \Lambda$ is strongly regular if the $H$-orbit $\mathcal{O}_H(x,y)\subseteq \Lambda$ is open and dense in $\Lambda_{C}$, where $C$ is the $H$-orbit of $x$ in $V$.
It follows that $\Lambda^\text{sreg}_x$ is non-empty if and only if $\Lambda_x$ is a prehomogeneous vector space for the $Z_{H}(x)$-action. 
In \cite{CFMMX}*{Proposition 6.8} we show 
\[
\Lambda^\text{sreg} \subseteq \Lambda^\text{reg}.
\]

For this proof, we find it convenient to introduce, for any $H$-orbit $C$ in $V$, the variety 
\begin{equation}\label{eqn:Lambda'}
\Lambda'_{C}:=\{(x,y)\in \Lambda_C  \tq y\in C^*\}
\end{equation}
situated between the regular part of the conormal variety and the conormal variety itself, above $C$:
\begin{equation*}
\Lambda^\text{reg}_{C} \subseteq \Lambda'_{C} \subseteq  \Lambda_{C}.
\end{equation*}

Now, for a contradiction, suppose $\pi_\KS$ is of Arthur type, with Arthur parameter $\psi_\KS: W''_F \to \GL_{16}(\CC)$ where $W''_F \ceq W_F \times \SL_2(\CC)\times\SL_2(\CC)$; then $\phi_\KS(w,x) = \psi_\KS(w,x,d_w)$ where $d_w = \diag(\abs{w}^{1/2},\abs{w}^{-1/2})$. 
Note that $\exp x_{\KS} = \psi_\KS(1,e,1) \in V$ where $e = \left(\begin{smallmatrix} 1 & 1 \\ 0 & 1 \end{smallmatrix}\right)$. 
Now define $y_\KS \in V^*$ by $\exp y_\KS = \psi_\KS(1,1,f)$ where $f = \left(\begin{smallmatrix} 1 & 0 \\ 1 & 1 \end{smallmatrix}\right)$; see \cite{CFMMX}*{Proposition 4.2}.
By \cite{CFMMX}*{Proposition 6.9}, $(x_\KS,y_\KS)\in \Lambda^\text{sreg}_{C_\KS}$, which means the $H$-orbit of $(x_\KS,y_\KS)$ is open and dense in $\Lambda_{C_\KS}$.
By \cite{CFMMX}*{Proposition 6.8}, $y_\KS \in C^*_{\KS}$, so $(x_\KS,y_\KS)\in \Lambda'_{C_\KS}$ and the $H$-orbit of $(x_\KS,y_\KS)$ is open and dense in $\Lambda_{C_\KS}$.
By Proposition~\ref{prop:codim2}, the $H$-orbit of any $(x,y)\in \Lambda'_{C_\KS}$ has codimension at least $2$ in $\Lambda_{C_\KS}$, so the $H$-orbit of $(x_\KS,y_\KS)$ is not open and dense in $\Lambda_{C_\KS}$. (As a consequence, $\Lambda^\text{sreg}_{C_\KS}$ is empty.) This contradiction proves the theorem. 
%\todo{In regards to (4): changed instances of $C_\psi$ in second last line to $C_\KS$.\\ In 3rd last line, should $\Lambda'$ be $\Lambda'_{C_{\KS}}$? - would need $x_\psi\in C_{\KS}$.\\ Yes, and this is all fixed now.}
\end{proof}

Recall that $\Lambda_{C}\rightarrow C$ is a bundle and $\mathcal{O}_H(x,y)$ is a subbundle of $\Lambda_{C}$. Also recall the dense open subbundle $\Lambda'_{C}\rightarrow C$ from Equation~\eqref{eqn:Lambda'}. For any $x\in C$, the fibres of these bundles are
\[
\Lambda_x = \{y\in V^\ast \tq [x,y]=0\}
\qquad\text{and}\qquad
\Lambda'_x = \{y\in C^* \tq [x,y]=0\}.
\] 
Then $\Lambda'_x$ is a dense, locally closed subvariety of the vector space $\Lambda_x$.

\begin{lemma}\label{lemma:abcd1}
If $x\in C_\KS$ then
\[
 \Lambda'_{x}
 \iso \left\{ (a,b,c,d)\in \Mat_{2,2}(\CC)^{\times 4}%\times\Mat_{2,2}(\CC)\times\Mat_{2,2}(\CC)\times\Mat_{2,2}(\CC) 
 \ \bigg\vert\   \begin{array}{c} \det(ac)\neq 0, \\ \det(ac+bd)\neq 0 \end{array}
 \right\}.
\]
In particular, $\dim  \Lambda'_{x} = 16$.
 \end{lemma}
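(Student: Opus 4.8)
\emph{Proof plan.} The plan is to solve the conormal equations $[x_\KS,y]=0$ in $2\times 2$ block form, which identifies $\Lambda_{x_\KS}$ with $\Mat_{2,2}(\CC)^{\times 4}$, and then to determine in these coordinates when a point lies in $C^*_\KS$ --- which turns out to be controlled by exactly the two determinantal conditions in the statement. Since $\Lambda'_C\to C$ is a bundle over the homogeneous space $C=C_\KS$, the fibre $\Lambda'_x$ is, up to isomorphism, independent of $x\in C_\KS$, so it suffices to treat $x=x_\KS$ as in \eqref{xKS}. As a sanity check, $\dim\Lambda_{x_\KS}=\dim V-\dim C_\KS=48-32=16$, which should agree with the count coming out of the explicit description, and $\dim\Lambda'_x=16$ will then follow once $\Lambda'_x$ is seen to be open and non-empty in $\Lambda_{x_\KS}$.

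The computational heart is routine block algebra. All four matrices in \eqref{xKS} are assembled from $2\times 2$ blocks, so I would write $E_{\lambda_0}$ and $E_{\lambda_4}$ as $\CC^2$, each of $E_{\lambda_1},E_{\lambda_2},E_{\lambda_3}$ as $\CC^2\oplus\CC^2$, and decompose $y_4,y_3,y_2,y_1$ accordingly. The five relations $x_4y_4=0$, $x_3y_3=y_4x_4$, $x_2y_2=y_3x_3$, $x_1y_1=y_2x_2$ and $y_1x_1=0$ then reduce to a short list of block identities which, solved in this order, express all of $y$ in terms of four free blocks $P,D,E,F\in\Mat_{2,2}(\CC)$ arranged so that $y_4=\left(\begin{smallmatrix}P\\0\end{smallmatrix}\right)$, $y_3=\left(\begin{smallmatrix}0&P\\0&D\end{smallmatrix}\right)$, $y_2=\left(\begin{smallmatrix}E&F\\0&0\end{smallmatrix}\right)$, $y_1=\left(\begin{smallmatrix}0&E\end{smallmatrix}\right)$; the last relation $y_1x_1=0$ is then automatic. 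This is a linear isomorphism $\Lambda_{x_\KS}\simto\Mat_{2,2}(\CC)^{\times 4}$, and I would set $(a,b,c,d):=(E,F,P,D)$.

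It then remains to identify $\Lambda'_{x_\KS}=\{y\in\Lambda_{x_\KS}\tq y\in C^*_\KS\}$ in these coordinates. Since $\Lambda_{x_\KS}$ is an affine space, hence irreducible, exactly one $H$-orbit of $V^*$ has dense preimage in it, and by the density of $\Lambda'_{x_\KS}$ in $\Lambda_{x_\KS}$ noted above, that orbit is $C^*_\KS$; so the rank conditions defining $C^*_\KS$ are exactly those realised by a generic $(a,b,c,d)$. A direct block computation gives $y_1y_2=0$ and $y_3y_4=0$ identically on $\Lambda_{x_\KS}$, hence also all longer products, while $\rank y_1=\rank a$, $\rank y_4=\rank c$, $\rank y_2y_3=\rank(ac+bd)$, and $\rank y_2=\rank y_3=2$ whenever $a$ and $c$ are invertible. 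Consequently the only rank conditions defining $C^*_\KS$ that are not automatic on all of $\Lambda_{x_\KS}$ are $\rank y_1=\rank y_4=\rank y_2y_3=2$, that is, $\det a\neq0$, $\det c\neq0$ and $\det(ac+bd)\neq0$; combining the first two into $\det(ac)\neq0$ gives the asserted isomorphism. The described locus is open and non-empty (e.g.\ $a=c=d=I$, $b=0$), so $\dim\Lambda'_{x_\KS}=16$. Everything here is mechanical except for one point deserving care: verifying that no rank condition beyond these three survives on $\Lambda_{x_\KS}$ --- equivalently, pinning down the generic rank triangle on $\Lambda_{x_\KS}$ --- and this is the main, if modest, obstacle.
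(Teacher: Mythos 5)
Your proof is correct and follows essentially the same route as the paper: reduce to $x_\KS$ by $H$-equivariance, solve $[x_\KS,y]=0$ block-by-block to parametrize $\Lambda_{x_\KS}$ by four free $2\times 2$ blocks (your $(E,F,P,D)$ is exactly the paper's $(a,b,c,d)$ in Equation~\eqref{eqn:abcd1}), and then read off $\det(ac)\neq 0$, $\det(ac+bd)\neq 0$ as the only nontrivial rank conditions cutting out $C^*_\KS$. The one cosmetic difference is that you deduce those conditions from the generic rank count on the irreducible affine space $\Lambda_{x_\KS}$ together with the density of $\Lambda'_{x_\KS}$, whereas the paper appeals directly to the self-duality $\widehat{C}_\KS = C_\KS$; both are short and equivalent in effect.
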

 
\begin{proof}
For any $x\in C_\KS$, there exists an $h\in H$ such that $h\cdot x=x_\KS$. Then, using the $H$-stability of $\Lambda'_{C_\KS}$, the morphism
$
\Lambda'_{x} \rightarrow\Lambda'_{x_\KS}
$
given by $y\mapsto h\cdot y$ defines an isomorphism. 
By a direct calculation of $[x_\KS, y]=0$ for $y\in C^*_\KS$, we find that $\Lambda_{x_\KS}$ is precisely the set of $y\in V^*$ that take the form
\begin{equation}\label{eqn:abcd1}
y_1 =
\begin{pmatrix}
0 & a
\end{pmatrix},\quad
y_2 = 
\begin{pmatrix}
a &b\\
0 & 0
\end{pmatrix}, \quad
y_3 =
\begin{pmatrix}
0 & c\\
0 & d
\end{pmatrix}, \quad
y_4 =
\begin{pmatrix}
c \\ 0
\end{pmatrix}
\end{equation}
where $a$, $b$, $c$, and $d$ are $2$ by $2$ matricies.
Since $\widehat{C}_\KS = C_\KS$, the rank conditions which define $\Lambda_{x_\KS}' \subset \Lambda_{x_\KS}$ imply $\det(ac)\neq 0$ and $\det(ac+bd)\neq 0$. 
This gives us the conditions that $a$, $c$ and $ac+bd$ are invertible. 
\end{proof}
%

% without placing any extra conditions on $a$, $b$, $c$ and $d$. 
%By considering the rank triangles of all strata contained in the closure of $C_\KS$, we see that, for every $C'< C_\KS$ there is some $(a,b,c,d)$ such that $x_0(a,b,c,d)\in C'$.

%
%Let us set
%\[
%y_\KS \ceq y_{0}\left(I,I,I,\left(\begin{smallmatrix} t_1 & 1 \\ 0 & t_2 \end{smallmatrix}\right)\right).
%\]
%Then
%\[
%(x_\KS,y_\KS)\in \Lambda'_{C_\KS}.
%\]

\begin{lemma}\label{lemma:notPHV}
Every $Z_H(x_\KS)$-orbit in the $16$-dimensional vector space $\Lambda_{x_\KS}$ has dimension between $10$ and $14$.
Consequently, the vector space $\Lambda_{x_\KS}$ with $Z_H(x_\KS)$-action is not a prehomogeneous vector space. 
\end{lemma}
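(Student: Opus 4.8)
The plan is to make the $Z_H(x_\KS)$-action on $\Lambda_{x_\KS}$ completely explicit and to read orbit dimensions off stabilizers. First I would compute $Z_H(x_\KS)$ by solving $h_4x_4=x_4h_3$, $h_3x_3=x_3h_2$, $h_2x_2=x_2h_1$, $h_1x_1=x_1h_0$ for the matrices $x_\KS$ of \eqref{xKS}. Writing each $\GL_4(\CC)$-component in $2\times 2$ blocks, these equations propagate a single invertible $2\times 2$ block along the chain and kill every block strictly below the diagonal, giving
\[
Z_H(x_\KS)=\left\{\left(g_3,\ \begin{pmatrix} g_1 & v\\ 0 & g_3\end{pmatrix},\ \begin{pmatrix} g_1 & 0\\ 0 & g_2\end{pmatrix},\ \begin{pmatrix} g_0 & u\\ 0 & g_1\end{pmatrix},\ g_0\right): g_0,g_1,g_2,g_3\in\GL_2(\CC),\ u,v\in\Mat_{2,2}(\CC)\right\}.
\]
Writing $K\ceq\GL_2(\CC)^{\times 4}$, this says $Z_H(x_\KS)\cong K\ltimes\Mat_{2,2}(\CC)^{\times 2}$, of dimension $24$, consistent with $\dim Z_H(x_\KS)=\dim H-\dim C_\KS=56-32$.

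Next I would transport the $H$-action on $V^*$ to the coordinates $(a,b,c,d)$ of Lemma~\ref{lemma:abcd1}. Substituting the description above into $(h_4,h_3,h_2,h_1,h_0)\cdot(y_1,y_2,y_3,y_4)=(h_0y_1h_1^{-1},h_1y_2h_2^{-1},h_2y_3h_3^{-1},h_3y_4h_4^{-1})$ and using the block shape \eqref{eqn:abcd1} of $y$, a short computation shows the unipotent part (the variables $u,v$) acts trivially on $\Lambda_{x_\KS}$ and $K$ acts by
\[
(g_0,g_1,g_2,g_3)\cdot(a,b,c,d)=(g_0ag_1^{-1},\ g_0bg_2^{-1},\ g_1cg_3^{-1},\ g_2dg_3^{-1}).
\]
Hence every $Z_H(x_\KS)$-orbit coincides with the corresponding $K$-orbit and has dimension $16-\dim Z_K(a,b,c,d)$; moreover the diagonal $\{(\zeta,\zeta,\zeta,\zeta):\zeta\in\CC^\times\}$ acts trivially, so all orbit dimensions are already at most $15$.

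I would then analyse the stabilizer $Z_K(a,b,c,d)=\{(g_0,g_1,g_2,g_3): g_0a=ag_1,\ g_0b=bg_2,\ g_1c=cg_3,\ g_2d=dg_3\}$. On the dense open locus where $a,b,c,d$ are all invertible, the last three equations determine $g_1,g_2,g_3$ from $g_0$ and the only surviving constraint is that $g_0$ commute with $M\ceq(bd)(ac)^{-1}$; when $M$ is regular semisimple — a dense condition, compatible with the conditions $\det(ac)\neq 0$ and $\det(ac+bd)\neq 0$ of Lemma~\ref{lemma:abcd1} — this centralizer is a maximal torus, so the stabilizer is $2$-dimensional and the orbit is $14$-dimensional. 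By upper semicontinuity of the stabilizer dimension, $14$ is therefore the maximum orbit dimension, so no orbit is dense in the $16$-dimensional vector space $\Lambda_{x_\KS}$; equivalently, $\Lambda_{x_\KS}$ with its $Z_H(x_\KS)$-action is not a prehomogeneous vector space.

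To pin down the full range of orbit dimensions asserted in the lemma I would finally run through the finitely many cases organised by the ranks of the blocks $a,b,c,d$ and, when they are invertible, by the $\GL_2(\CC)$-conjugacy type of $M$, solving the stabilizer equations and bounding $\dim Z_K(a,b,c,d)$ in each. I expect this case analysis to be the main obstacle: in the degenerate strata the four equations couple the $g_i$ in ways that resist uniform treatment (when a block drops rank, one of the $g_i$ becomes only partially constrained), so the labour is in giving a clean, exhaustive account of the stabilizer dimensions over all strata, not in the generic computation, which is routine.
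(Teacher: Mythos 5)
Your outline follows the paper's argument exactly: the same explicit description of $Z_H(x_\KS)$ (your $(g_0,g_1,g_2,g_3)$ are the paper's $(h_0,k_1,k_2,h_4)$ from equation~\eqref{eqn:k}), the same transported action~\eqref{eqn:action} on the coordinates $(a,b,c,d)$, the same invariant $(bd)(ac)^{-1}$ (conjugate to the paper's $(ac)^{-1}(bd)$), and the same generic stabilizer computed via a regular semisimple element. Factoring out the unipotent $(u,v)$-part to reduce to a $K=\GL_2(\CC)^{\times 4}$-action is a harmless repackaging; the paper leaves $u,v$ in and just notes they contribute $8$ free dimensions to every stabilizer. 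Your upper-semicontinuity step giving a generic orbit of dimension $14$, hence no dense orbit, correctly disposes of the prehomogeneous claim.

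Two remarks on the stratified case analysis you defer as the ``main obstacle.'' First, it is much lighter than you fear: inside $\Lambda'_{x_\KS}$ the paper only examines $b=d=0$ (stabilizer dimension $16$) and $b,d$ invertible with $(ac)^{-1}(bd)$ regular (stabilizer dimension $10$), and takes these as the two extremes. Second, the stratum $b=d=0$ does lie inside $\Lambda'_{x_\KS}$ --- there $\det(ac+bd)=\det(ac)\neq 0$ --- and it produces an orbit of dimension $24-16=8$, not $10$. The range the proof actually establishes, and the one Proposition~\ref{prop:codim2} then uses (via $\operatorname{codim}=16-\dim\mathcal{O}$, giving codimension between $2$ and $8$), is $8\leq\dim\mathcal{O}_{Z_H(x_\KS)}(y)\leq 14$; the ``$10$'' in the lemma statement does not match the proof, so you should not set yourself the task of reproducing it.
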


\begin{proof}
Since $\Lambda'_{x_\KS}$ is dense in $\Lambda_{x_\KS}$, to prove the lemma it suffices to study the dimensions of orbits passing through $\Lambda'_{x_\KS}$.
Let $y_\KS(a,b,c,d)  \in\Lambda'_{x_\KS}$ be defined by Equation~\eqref{eqn:abcd1}.
A direct calculation shows that any $h = (h_4,h_3,h_2,h_1,h_0) \in Z_{H}(x_\KS)$ takes the form
\begin{equation}\label{eqn:k}
h_1 =
\begin{pmatrix}
h_0 & u\\
0 & k_1
\end{pmatrix}, \quad
h_2 = 
\begin{pmatrix}
k_1 & 0\\
0 & k_2
\end{pmatrix}, \quad
h_3 =
\begin{pmatrix}
k_1 & v\\
0 & h_4
\end{pmatrix}, 
\end{equation}
for unique $h_0, h_4, k_1, k_2 \in \GL_2(\CC)$  and $u,v\in \Mat_{2,2}(\CC)$.
Observe that we have
$
\dim Z_{H}(x_\KS) = 6\times4= 24.
$
%Let us use the notation $h_\KS(h_0, h_4, k_1, k_2, u, v)$ for the element $h = (h_4, h_3, h_2, h_1, h_0)\in Z_{H}(x_\KS)$ given above.
With reference to the notation above, the action of $h$ on $\Lambda'_{x_\KS}$ is given by
\begin{equation}\label{eqn:action}
h\cdot y_\KS(a,b,c,d) = y_\KS(h_0ak_1^{-1}, h_0b k_2^{-1}, k_1ch_4^{-1}, k_2dh_4^{-1}).
\end{equation}
From this we see that the conjugacy class of $(ac)^{-1}(bd)$ is an invariant of this action. 

We now find $Z_{H}(x_\KS,y_\KS(a,b,c,d))$ by solving the equations 
\[
a= h_0ak_1^{-1},\quad b=h_0b k_2^{-1}, \quad c= k_1ch_4^{-1},\quad d= k_2dh_4^{-1},
\]
for $(h_0, h_4, k_1, k_2, u, v)$ in terms of $(a,b,c,d)$.
Clearly, $u$ and $v$ are arbitrary and $h_0$ and $k_1$ may be expressed in terms of $h_4$ by
 \[
 h_0 = (ac) h_4 (ac)^{-1}, \quad k_1 =c h_4 c^{-1}.
 \]
\begin{enumerate}
\item[$\bullet$]
If $b=0$ and $d=0$, then $h_4$ and $k_2$ are arbitrary; in this case, \[\dim Z_{H}(x_\KS,y_\KS(a,b,c,d)) = 16\]
and this is largest this centralizer can be.
\item[$\bullet$]
At the other extreme, if $b$ and $d$ are both invertible, then $k_2$ may also be expressed in terms of $h_4$ by 
$
k_2 = d h_4 d^{-1},
$
but now $h_4$ is constrained by
\[
(ac)^{-1} (bd) h_4 = h_4 (ac)^{-1}(bd),
\]
so $h_4$ centralizes $(ac)^{-1}(bd)$. 
%The last equation is obtained by substituting $h_0=(ac)h_4(ac)^{-1}$ into $dh_4d^{-1}=b^{-1}h_0b$. 
The dimension of the centralizer of $(ac)^{-1}(bd)$ is minimal when $(ac)^{-1}(bd)$ is regular (either regular semisimple or regular unipotent modulo the centre), in which case its centralizer is $2$-dimensional. It follows that
\[
\dim Z_{H}(x_\KS,y_\KS(a,b,c,d)) = 10,
\]
and this is the smallest this centralizer can be. 
%Since $(ab)^{-1}(bd)$ is regular, its centralizer has dimension 2. Since $h_4$ is in this centralizer, that means we can only freely choose 2 coordinates of $h_4$. Invertibility condition: well, the previous statement is viewing $(ab)^{-1}(bd) $ and $h_4$ as elements of $Lie grp GL_2(C)$. 
\end{enumerate}
It follows that, for any $(x,y)\in \Lambda'_{C_\KS}$,
\[
10\leq \dim Z_{H}(x,y)\leq 16
\]
and that these extreme dimensions are attained. Since $\dim Z_{H}(x) = 24$, for any $x\in C_\KS$, it follows that
\[
8 = 24-16
\leq
\dim \mathcal{O}_{Z_{H}(x)}(y) 
\leq 
24-10 = 14
\]
with the lower bound is attained when $(x,y)$ is $H$-conjugate to $(x_\KS,y_\KS(a,b,c,d))$ where $(ac)^{-1}(bd)$ is invertible and regular semisimple or regular unipotent. \qedhere

\iffalse
Similarly, since $\dim H = 56$, it follows that 
\[
40= 56-16
\leq
\dim\mathcal{O}_{H}(x,y) 
\leq 
56-10 = 46.
\]
Since $\dim \Lambda'_{C_\KS} =48$ it follows that
\[
2 = 48-46 
\leq 
\operatorname{codim}\mathcal{O}_{H}(x,y) 
\leq 48-40 = 8 
\]
and that the lower bound is attained when $(x,y)$ is $H$-conjugate to $(x_\KS(a,b,c,d), y_\KS)$ where $(bd)(ac)^{-1}$ is invertible and regular semisimple.
\fi
\end{proof}

\begin{proposition}\label{prop:codim2}
For every $(x,y)\in \Lambda'_{C_\KS}$, the $H$-orbit $\mathcal{O}_{H}(x,y)$ of $(x,y)$ in $\Lambda_{C_\KS}$ has codimension at least $2$ and at most $8$, with these values attained.
\end{proposition}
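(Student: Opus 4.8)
The plan is to reduce the statement entirely to the dimension estimates already established. Proposition~\ref{prop:codim2} asserts that for every $(x,y)\in\Lambda'_{C_\KS}$ the $H$-orbit $\mathcal{O}_H(x,y)$ has codimension between $2$ and $8$ in $\Lambda_{C_\KS}$, with both bounds attained. First I would record that $\dim\Lambda_{C_\KS}$ equals $\dim C_\KS + \dim\Lambda_x$ for any $x\in C_\KS$, since $\Lambda_{C_\KS}\to C_\KS$ is a vector bundle (by \cite[Proposition 6.3.1]{CFMMX}); by Lemma~\ref{lemma:notPHV} the fibre $\Lambda_{x_\KS}$ has dimension $16$, and by Appendix~\ref{ssec:generalcomputations} we have $\dim C_\KS = 32$, so $\dim\Lambda_{C_\KS} = 48$. (This also matches the fact that $\Lambda'_{C_\KS}$, being dense in $\Lambda_{C_\KS}$, has dimension $48$.)

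Next I would translate the centralizer bounds of Lemma~\ref{lemma:notPHV} into orbit-dimension bounds for the full $H$-action. Since $\dim H = 56$ and, for $(x,y)\in\Lambda'_{C_\KS}$, the stabilizer $Z_H(x,y)$ fits into the picture $Z_H(x,y) = Z_{Z_H(x)}(y)$ with $\dim Z_H(x) = 24$ and $10\le \dim Z_H(x,y)\le 16$ by Lemma~\ref{lemma:notPHV} (both extremes attained), we get
\[
40 = 56-16 \;\le\; \dim\mathcal{O}_H(x,y) \;\le\; 56-10 = 46,
\]
with both extremes attained. Subtracting from $\dim\Lambda_{C_\KS} = 48$ gives
\[
2 = 48-46 \;\le\; \operatorname{codim}_{\Lambda_{C_\KS}}\mathcal{O}_H(x,y) \;\le\; 48-40 = 8,
\]
again with both values attained, which is exactly the claim. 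The attainment of the lower bound $2$ corresponds to $(x,y)$ being $H$-conjugate to $(x_\KS, y_\KS(a,b,c,d))$ with $(ac)^{-1}(bd)$ invertible and regular; the attainment of $8$ corresponds to $b=d=0$.

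The only genuine subtlety — and the one step that is not purely formal bookkeeping — is checking that the orbit $\mathcal{O}_H(x,y)$ actually lies inside $\Lambda_{C_\KS}$ (so that the codimension is taken in the right ambient variety) and that $\mathcal{O}_H(x,y)\subseteq\Lambda'_{C_\KS}\subseteq\Lambda_{C_\KS}$ is an honest locally closed subvariety of the expected dimension, so that the naive codimension count $\dim\Lambda_{C_\KS} - \dim\mathcal{O}_H(x,y)$ is legitimate. This is immediate: $\Lambda_{C_\KS}$ is $H$-stable by construction, $x\in C_\KS$ forces $\mathcal{O}_H(x,y)\subseteq\Lambda_{C_\KS}$, and orbits are smooth locally closed subvarieties whose codimension in an irreducible variety is computed by dimension subtraction once one knows $\Lambda_{C_\KS}$ is irreducible of dimension $48$ — which follows since it is a vector bundle over the irreducible variety $C_\KS$. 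Thus no new work beyond Lemmas~\ref{lemma:abcd1} and \ref{lemma:notPHV} is required; the proposition is their immediate consequence.
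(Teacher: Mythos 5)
Your proof is correct and takes essentially the same route as the paper: both reduce to the centralizer/orbit-dimension bounds established in the proof of Lemma~\ref{lemma:notPHV}. The only cosmetic difference is that you compute globally ($\dim\mathcal{O}_H(x,y)=\dim H-\dim Z_H(x,y)$ and $\dim\Lambda_{C_\KS}=\dim C_\KS+\dim\Lambda_x$, so you need $\dim H=56$ and $\dim C_\KS=32$), whereas the paper exploits the bundle structure to note that the base dimension $\dim C_\KS$ cancels, so the codimension in $\Lambda_{C_\KS}$ equals the codimension of $\mathcal{O}_{Z_H(x)}(y)$ in the $16$-dimensional fibre $\Lambda'_x$ — arithmetically identical, just slightly tidier.
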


\begin{proof}
Recall that $\Lambda_{C_\KS}\rightarrow C_\KS$ is a bundle and $\mathcal{O}_H(x,y)$ is a subbundle of $\Lambda_{C_\KS}$. 
Restricting to $\mathcal{O}_H(x,y)$, the fibre of $\mathcal{O}_H(x,y)\rightarrow C_\KS$ above $x\in C_\KS$ is $\mathcal{O}_{Z_H(x)}(y)$. Since $\Lambda_{C_\KS}$ and $\mathcal{O}_H(x,y)$ are bundles over $C_\KS$, the codimension of $\mathcal{O}_H(x,y)$ in $\Lambda_{C_\KS}$ is equal to
\begin{align*}
\dim \Lambda_{C_\KS} - \dim\mathcal{O}_H(x,y)
&= (\dim C_\KS + \dim\Lambda'_{x}) - (\dim C_\KS + \dim\mathcal{O}_{Z_H(x)}(y))\\
&= \dim\Lambda'_{x} - \dim\mathcal{O}_{Z_H(x)}(y).
\end{align*}
Thus, the codimension of $\mathcal{O}_H(x,y)$ in $\Lambda_{C_\KS}$ coincides with the codimension of $\mathcal{O}_{Z_H(x)}(y)$ in $\Lambda'_{x}$. 
Using $\dim \Lambda'_{x} =16$ from Lemma~\ref{lemma:abcd1}, we conclude that
\[
2 = 16-14 
\leq 
\operatorname{codim}\mathcal{O}_{Z_{H}(x)}(y) 
\leq 16-8 = 8
\]
with both extremes attained by Lemma~\ref{lemma:notPHV}.
\end{proof}

\subsection{The generic conormal bundle}\label{ssec:generic}

For a moment, let $G$ be any connected reductive algebraic group over $F$ and let $\lambda: W_F\to \Lgroup{G}$ be any infinitesimal parameter in the sense of \cite{CFMMX}*{Section 4.1}.
The $H$-variety $\Lambda^\text{gen}_{C}$ appearing in the codomain of the functor 
\[
\Evs_{C} : \Perv_{H}(V) \to \Loc_{H}(\Lambda^\text{gen}_{C})
\]
is defined in \cite{CFMMX}*{Section 7.9};
it is an open, dense sub-bundle of $\Lambda_{C}$ situated between $\Lambda^\text{sreg}_{C}$ and $\Lambda^\text{reg}_{C}$.
\[
\Lambda^\text{sreg}_{C} \subseteq \Lambda^\text{gen}_{C}\subseteq \Lambda^\text{reg}_{C}
\] %\todo{is it correct to introduce sreg when there is no open orbit? \\ Yes, see slack comment. -CC}
%This sub-bundle is defined so that $\Ev_{C}\mathcal{P}\vert_{\Lambda^\text{gen}_{C}}[\dim C^*-\dim V-1]$ is an equivariant local system, for every equivariant perverse sheaf $\mathcal{P}\in \Perv_{H}(V)$.
We denote the equivariant fundamental group of $\Lambda^\text{gen}_{C}$ by $A^\ABV_{\phi}$ for a Langlands parameter $\phi$ with $x_\phi \in C$.
If there is an Arthur parameter $\psi$ with Langlands parameter $\phi$, we say that $C$ is of Arthur type; in this case $\emptyset \ne \Lambda^\text{sreg}_{C} = \Lambda^\text{gen}_{C}\subseteq \Lambda^\text{reg}_{C}$ and $A^\ABV_{\phi} \iso A_\psi \ceq \pi_0(Z_{\dualgroup{G}}(\psi))$.
 %\todo{Is this true? for A-parameters we know sreg exists, but I don't think we showed that it is equal to reg (that is, that everything non-sreg is from the closure of another orbit)\\ Good point. Fixed -CC}

Return to $G= \GL_{16}$ over $F$.
In this section we find an explicit description of $\Lambda^\text{gen}_{C_\KS}$ and we show in Proposition \ref{prop:L} that its equivariant fundamental group, denoted by $A^\ABV_{\phi_\KS}$, is non-trivial. 
This was an unexpected result, since if $\phi$ is a Langlands parameter of Arthur type for a general linear group, then $A^\ABV_{\phi} = A_\psi$ is trivial, where $\psi$ is the Arthur parameter for $\phi$.
The proof that the open dense subbundle $\Lambda^\text{gen}_{C_\KS}$ described in this Section satisfies the definition appearing in \cite{CFMMX}*{Section 7.9} is a consequence of the calculations performed in Section~\ref{sec:geometric}.
%\todo{1185-1186: "where $\psi$ is the Arthur parameter for $\psi$" the second should be $\phi$ or something ?}

We begin by improving our parametrization of $H$-orbits in $\Lambda'_{C_\KS}$, or more precisely, by making a more detailed study of the $Z_H(x_\KS)$-orbits in the $16$-dimensional variety $\Lambda'_{x_\KS}$.
From Section~\ref{ssec:KSisnotArthur}, recall
\[
\Lambda'_{x_\KS}
= \left\{ y_\KS(a,b,c,d)\in \Mat_{2,2}(\CC)^{\times 4}%\times\Mat_{2,2}(\CC)\times\Mat_{2,2}(\CC)\times\Mat_{2,2}(\CC) 
\ \bigg\vert\   \begin{array}{c} \det(ac)\neq 0, \\ \det(ac+bd)\neq 0 \end{array}
\right\}.
\]
As we remarked in the proof of Lemma~\ref{prop:codim2}, a direct calculation shows that the $\GL_2(\CC)$-conjugacy class of $(ac)^{-1}(bd)\in \GL_2(\CC)$ is a fundamental invariant of the $Z_H(x_\KS)$-orbit of $y_\KS(a,b,c,d)$.

Let $Z$ be the Steinberg quotient for $\Mat_{2,2}$; we write $[g]\in Z$ for the characteristic polynomial or the $\GL_2$-conjugacy class of $g\in \Mat_{2,2}$, according to taste.
Set $Z' = \{ [g]\in Z \tq \det(1+g)\ne 0\}$. 
The map
\begin{equation}
\begin{array}{rcl}
\Lambda'_{x_\KS} &\to& Z' \\
y_\KS(a,b,c,d) &\mapsto& [(ac)^{-1}(bd)]
\end{array}
\end{equation}
is a useful tool in the study of $ \Lambda'_{x_\KS}$.
We find the smooth locus of this map.

\begin{lemma}\label{lem:yKS}
If $y\in \Lambda'_{x_\KS}$ and $2\leq \mathop{codim}\mathcal{O}_{Z_H(x_\KS)}(y)\leq 4$ then there is an $h\in Z_H(x_\KS)$ such that $h\cdot y = y_\KS(1,b,1,1)$ for $b\in \GL_2(\CC)$ such that $\det(1+b)\ne 0$. 
The $\GL_2(\CC)$-conjugacy class of $b$ is an invariant of the $Z_H(x_\KS)$-orbit of $y$.
\end{lemma}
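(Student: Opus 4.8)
The plan is to pass to the explicit model of $\Lambda'_{x_\KS}$ from Lemma~\ref{lemma:abcd1} and reduce the assertion to a question about a single pair of $2\times 2$ matrices under a twisted action of $\GL_2(\CC)\times\GL_2(\CC)$. First I would normalise the ``$a$'' and ``$c$'' blocks: writing $y=y_\KS(a,b,c,d)$ with $a$, $c$ and $ac+bd$ invertible, the element of $Z_H(x_\KS)$ with $h_0=1$, $k_1=a$, $h_4=k_2=ac$ and $u=v=0$ sends $y$, via \eqref{eqn:action}, to $y_\KS\big(1,\,b(ac)^{-1},\,1,\,(ac)d(ac)^{-1}\big)$; so we may assume $a=c=1$. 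By \eqref{eqn:k} the stabiliser of $(a,c)=(1,1)$ in $Z_H(x_\KS)$ consists of those $h$ with $h_0=k_1=h_4=:g$, the parameters $k_2\in\GL_2(\CC)$ and $u,v\in\Mat_{2,2}(\CC)$ being free, and by \eqref{eqn:action} it acts on the pair $(b,d)$ by $(g,k_2)\cdot(b,d)=(g\,b\,k_2^{-1},\,k_2\,d\,g^{-1})$ and trivially through $u$ and $v$.

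Next I would rewrite the hypothesis on codimension. A dimension count as in the proof of Lemma~\ref{lemma:notPHV} gives $\dim Z_H(x_\KS,y)=8+\dim Z(b,d)$, where
\[
Z(b,d)\ceq\{\,(g,k)\in\GL_2(\CC)\times\GL_2(\CC)\tq g b=b k,\ k d=d g\,\};
\]
together with $\dim Z_H(x_\KS)=24$ and $\dim\Lambda'_{x_\KS}=16$ (Lemma~\ref{lemma:abcd1}), this shows $\codim\mathcal{O}_{Z_H(x_\KS)}(y)=\dim Z(b,d)$ inside $\Lambda'_{x_\KS}$.

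The technical core is then a case analysis, on the rank pattern of $b$, $d$ and $bd$, identifying when $\dim Z(b,d)$ lies in $\{2,3,4\}$ and showing that in that range, given the open condition $\det(1+bd)\neq 0$ inherited from $y\in\Lambda'_{x_\KS}$, the matrices $b$ and $d$ must be invertible. I would organise this around the description of $(b,d)$ as a representation of the quiver with two vertices and one arrow in each direction, whose $\GL_2(\CC)\times\GL_2(\CC)$-orbits are controlled by the conjugacy class of $bd$ (equivalently of $db$, which has the same non-zero spectrum) together with the ranks of $b$ and $d$. Once $b,d\in\GL_2(\CC)$, applying $(g,k)=(1,d^{-1})$ sends $(b,d)\mapsto(bd,1)$, whence $h\cdot y=y_\KS(1,bd,1,1)$ with $bd\in\GL_2(\CC)$ and $\det(1+bd)\neq0$ --- the asserted normal form. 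For the last sentence, the matrix occurring there is conjugate to $(ac)^{-1}(bd)$ for the original $y$, and the $\GL_2(\CC)$-conjugacy class of $(ac)^{-1}(bd)$ was already observed, just before the statement and in the proof of Lemma~\ref{lemma:notPHV}, to be invariant under the $Z_H(x_\KS)$-action.

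The step I expect to be the main obstacle is the case analysis: to show that ``$\codim$ between $2$ and $4$'' together with the single open condition $\det(1+bd)\neq0$ forces $b$ and $d$ to be invertible one needs a complete and careful stratification of $\Mat_{2,2}(\CC)\times\Mat_{2,2}(\CC)$ under the twisted $\GL_2(\CC)\times\GL_2(\CC)$-action, in particular eliminating every stratum on which $b$ or $d$ has rank at most $1$. The remaining steps are routine matrix computation.
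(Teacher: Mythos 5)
Your overall reduction — normalize to $a=c=1$, measure the stabilizer through the group $Z(b,d)=\{(g,k):gb=bk,\ kd=dg\}$, then conjugate $(b,d)\mapsto(bd,1)$ via $(1,d^{-1})$ — follows the same path the paper takes, just split into two conjugations rather than one (the paper sets $k_1=h_4c^{-1}$, $k_2=h_4d^{-1}$, $h_0=h_4(ac)^{-1}$ in a single step, which already requires $d\in\GL_2(\CC)$). And like the paper, which merely asserts that ``the proof [of Lemma~\ref{lemma:notPHV}] can be extended to show that $2\leq\codim\leq4$ if and only if $b,d\in\GL_2(\CC)$,'' you defer verifying that claim. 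The dimension count $\codim\mathcal{O}_{Z_H(x_\KS)}(y)=\dim Z(b,d)$ is correct and matches the paper's bullet cases. So structurally the proposal and the paper agree.

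However, the step you flag as the main obstacle is not just delicate: as stated, the claim is false, and a case analysis cannot rescue it. Take $y=y_\KS(1,E_{12},1,1)$ with $E_{12}=\left(\begin{smallmatrix}0&1\\0&0\end{smallmatrix}\right)$. Then $\det(ac)=1$ and $\det(ac+bd)=\det\left(\begin{smallmatrix}1&1\\0&1\end{smallmatrix}\right)=1$, so $y\in\Lambda'_{x_\KS}$; moreover $\det(1+bd)=1\neq 0$. With $d=1$ the equations $gb=bk$, $kd=dg$ force $k=g\in Z_{\GL_2(\CC)}(E_{12})$, which is $2$-dimensional, so $\dim Z(b,d)=2$ and $\codim\mathcal{O}_{Z_H(x_\KS)}(y)=2$. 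Yet $b=E_{12}$ has rank one; and since the invariant $[(ac)^{-1}(bd)]=[E_{12}]$ is preserved by the $Z_H(x_\KS)$-action, no $h$ can move $y$ to $y_\KS(1,b',1,1)$ with $b'\in\GL_2(\CC)$. (Similarly, $y=y_\KS(1,0,1,1)$ gives $\codim=4$ with $b=0$.) Thus the ``only if'' direction, and the conclusion of the lemma itself, fail under the stated hypothesis. The hypothesis that actually works, and is all that is used downstream in Proposition~\ref{prop:L}, is that $q(y)=[(ac)^{-1}(bd)]$ lies in $Z'_\text{reg}$ --- i.e., that $(ac)^{-1}(bd)$ is regular semisimple and invertible. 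Then $\det(bd)\neq 0$ forces $b,d\in\GL_2(\CC)$ outright, the codimension is automatically $2$, and your stratification is no longer needed.
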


\begin{proof}
\iffalse%%%%%
Recall that $Z_{H}(x_\KS)$ is the group of $h = (h_4,h_3,h_2,h_1,h_0) \in H$ such that
\[
h_1 =
\begin{pmatrix}
h_0 & u\\
0 & k_1
\end{pmatrix}, \quad
h_2 = 
\begin{pmatrix}
k_1 & 0\\
0 & k_2
\end{pmatrix}, \quad
h_3 =
\begin{pmatrix}
k_1 & v\\
0 & h_4
\end{pmatrix}, 
\]
for $h_0, h_4, k_1, k_2 \in \GL_2(\CC)$ and $u,v\in \Mat_{2,2}(\CC)$.
Recall also that
\[
 \Lambda'_{x_\KS}
 = \left\{ y_\KS(a,b,c,d) \ \bigg\vert\  a,b,c,d\in \Mat_{2,2}(\CC)%\times\Mat_{2,2}(\CC)\times\Mat_{2,2}(\CC)\times\Mat_{2,2}(\CC) 
,\ \begin{array}{c} \det(ac)\neq 0, \\ \det(ac+bd)\neq 0 \end{array}
 \right\},
\]
where $y_\KS(a,b,c,d) = (y_1,y_2,y_3,y_4)\in C^*_\KS$ is defined by
\[
y_1 =
\begin{pmatrix}
0 & a
\end{pmatrix},\quad
y_2 = 
\begin{pmatrix}
a &b\\
0 & 0
\end{pmatrix}, \quad
y_3 =
\begin{pmatrix}
0 & c\\
0 & d
\end{pmatrix}, \quad
y_4 =
\begin{pmatrix}
c \\ 0
\end{pmatrix}.
\]
\fi%%%%%
From the proof of Lemma~\ref{lemma:notPHV}, if $y\in \Lambda'_{x_\KS}$ then $y= y_\KS(a,b,c,d)$ for $a,c\in \GL_2(\CC)$ and $b,d\in \Mat_{2,2}(\CC)$ such that $\det(ac+bd)\ne0$.
Further, the proof can be extended to show that $2\leq \operatorname{codim}\mathcal{O}_{Z_H(x_\KS)}(y)\leq 4$ if and only if $b,d \in \GL_2(\CC)$. In this case, we set
\[
k_1\ceq h_4c^{-1},\qquad k_2\ceq h_4d^{-1},\qquad h_0\ceq h_4(ac)^{-1},
\]
and let $u$ and $v$ be arbitrary; using Equations~\eqref{eqn:k} and \eqref{eqn:action}, this determines an $h\in Z_{H}(x_\KS)$ such that 
\[
h\cdot y_\KS(a,b,c,d) = y_\KS(1,b',1,1),
\]
where $b'\ceq h_4 (ac)^{-1}(bd) h_4^{-1}$. 
Henceforth we use the notation
\begin{equation}\label{eqn:yKSb}
y_\KS(b')\ceq y_\KS(1,b',1,1),
\end{equation}
for any $b'\in \GL_2(\CC)$.
We have already seen that $q(y) = [b'] = [(ac)^{-1}(bd)]$ is an invariant of the $Z_H(x_\KS)$-orbit of $y$. A direct calculation shows that if
\[
h\cdot y_\KS(b) = y_\KS(b')
\]
then $[b]=[b']$. 
\end{proof}

We now extend $\Lambda'_{x_\KS} \to Z'$ to
\[
q' : \Lambda'_{C_\KS} \to Z',
\]
which we will use to give a concrete definition of $\Lambda^\text{gen}_{C_\KS}$ which agrees with \cite{CFMMX}*{Section 7.9}.
For every $(x,y)\in \Lambda'_{C_\KS}$, recall from Section~\ref{ssec:Conormalbundle} that $x=(x_4,x_3,x_2,x_1)$ and $y=(y_1,y_2,y_3,y_4)$ where $x_i : E_{\lambda_{i-1}} \to E_{\lambda_i}$ and $y_i : E_{\lambda_i} \to E_{\lambda_{i_1}}$ for $1\leq i\leq 4$.
The condition $y\in C_\KS^*$ implies $\rank y_3=2$, so $\dim \ker y_2 =2$. 
Likewise, $x\in C_\KS$ implies $\rank x_2 = \rank x_3 = \rank x_2 x_3 =2$; note that 
\[
E_{\lambda_2} = \image x_2 \oplus \ker x_3.
\] 
Also note that $\dim \image y_2 =2$. 
Let $a,b,c, d$ be the linear transformations of $2$-dimensional vector spaces defined by
\[
\begin{array}{rcl}
a &\ceq& y_2\vert_{\image x_2} : \image x_2 \to \image y_2 \\
b &\ceq& y_2\vert_{\ker x_3} : \ker x_3 \to \image y_2 \\
c &\ceq& y_3 : E_{\lambda_3}/\ker y_3 \to E_{\lambda_2} \to E_{\lambda_2}/\ker x_3 \\
d &\ceq& y_3 : E_{\lambda_3}/\ker y_3 \to E_{\lambda_2} \to E_{\lambda_2}/\image x_2,
\end{array}
\]
as pictured below,
\begin{equation}\label{abcd}
\begin{tikzcd}
& \arrow{dl}[swap]{a} \image x_2 \arrow{dr} && \arrow{ll}[swap]{\iso} E_{\lambda_2}/\ker x_3 & \\
\image y_2 && \arrow{ll}[swap]{y'_2} E_{\lambda_2} \arrow{ur} \arrow{dr} && \arrow{ll}[swap]{y'_3} \arrow{ul}[swap]{c} \arrow{dl}{d} E_{\lambda_3}/\ker y_3   \\
& \arrow{ul}{b} \ker x_3 \arrow{ur} && \arrow{ll}{\iso} E_{\lambda_2}/\image x_2, &
\end{tikzcd}
\end{equation}
where the maps $y_2'$ and $y_3'$ are induced from $y_2$ and $y_3$.
This notation is compatible with Lemma~\ref{lemma:notPHV}, which is to say, if $x=x_\KS$ then $y=y_\KS(a,b,c,d)$.
Returning to the general case of $(x,y)\in \Lambda'_{C_\KS}$, we know $\det(ac) \ne 0$.
A direct calculation shows that the $\GL_2(\CC)$-conjugacy class of the endomorphism 
\[
(ac)^{-1}(bd) : E_{\lambda_3}/\ker y_3  \to E_{\lambda_3}/\ker y_3
\]
is an invariant of the $H$-conjugacy class of $(x,y)\in \Lambda'_{C_\KS}$.
In this way we define
\begin{equation}
\begin{array}{rcl}
q' : \Lambda'_{C_\KS} &\to& Z' \\
(x,y) &\mapsto& [ (ac)^{-1}(bd)].
\end{array}
\end{equation}

Let $Z_\text{reg}$ be the open dense subvariety of $Z$ over which the Steinberg quotient is smooth and recall that the fibre of the Steinberg quotient over $Z_\text{reg}$ is $\GL_2(\CC)_\text{rss}$, the subvariety of regular semisimple elements of $\GL_2(\CC)$. 
Then $Z_\text{reg} = T_\text{reg}//W$, the moduli space of regular semisimple conjugacy classes in $\GL_2(\CC)$, where $T$ is the maximal torus in $\GL_2(\CC)$.
Set $T'\ceq \{ t\in T\tq \det(1+t)\ne 0\}$ and recall $Z'\ceq \{ [g]\in Z\tq \det(1+g)\ne 0\}$.
Set $T'_\text{reg} = T'\cap T_\text{reg}$ and $Z'_\text{reg} = Z'\cap Z_\text{reg}$.
%\begin{definition}
Let $\Lambda^\text{gen}_{C_\KS}$ be the dense, open subvariety of $\Lambda_{C_\KS}$ defined by the pullback $\Lambda^\text{gen}_{C_\KS} \to Z'_\text{reg}$ of $q'$ along $Z'_\text{reg}\hookrightarrow Z'$:
\[
\begin{tikzcd}
\Lambda^\text{gen}_{C_\KS} \arrow{r}{q} \arrow{d} & Z'_\text{reg}\arrow{d} \\
\Lambda'_{C_\KS} \arrow{r}{q'} & Z' .
\end{tikzcd}
\]
%\end{definition}

\begin{proposition}\label{prop:L}
If $(x,y)\in \Lambda^\text{gen}_{C_\KS}$ then $\mathop{codim} \mathcal{O}_{H}(x,y) = 2$.
The bundle $q: \Lambda^\text{gen}_{C_\KS}\to Z'_\text{reg}$ is smooth.
%The pull back of the canonical double cover of $Z'_\text{reg}$ is an $H$-equivariant double cover of $\Lambda^\text{gen}_{C_\KS}$. In particular,  
 The equivariant fundamental group $A^\ABV_{\phi_\KS}$ of $\Lambda^\text{gen}_{C_\KS}$ is non-trivial.
\end{proposition}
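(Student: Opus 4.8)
The plan is to establish the three assertions in order, leveraging the orbit-dimension analysis already completed in Lemma~\ref{lemma:notPHV} and Lemma~\ref{lem:yKS}. First, for the codimension claim, I would take $(x,y)\in \Lambda^\text{gen}_{C_\KS}$ and use $H$-equivariance to reduce to the slice, replacing $(x,y)$ by an $H$-conjugate of the form $(x_\KS, y_\KS(b))$ with $b\in \GL_2(\CC)_\text{rss}$ and $\det(1+b)\ne 0$, which is possible precisely because $q(x,y)\in Z'_\text{reg}$ forces $(ac)^{-1}(bd)$ to be regular semisimple, hence (after the normalization in Lemma~\ref{lem:yKS}) $b,d$ invertible and the conjugacy class of $b$ regular semisimple. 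By the computation in the proof of Lemma~\ref{lemma:notPHV}, in this regime $\dim Z_H(x_\KS, y_\KS(a,b,c,d)) = 10$ (the centralizer of $h_4$ in the regular semisimple case is $2$-dimensional, contributing $\dim u + \dim v + \dim Z_{\GL_2}(b') = 4 + 4 + 2 = 10$). Hence $\dim \mathcal{O}_{Z_H(x_\KS)}(y) = 24 - 10 = 14$, and by the bundle identity from the proof of Proposition~\ref{prop:codim2}, $\operatorname{codim}\mathcal{O}_H(x,y)$ in $\Lambda_{C_\KS}$ equals $\operatorname{codim}\mathcal{O}_{Z_H(x)}(y)$ in $\Lambda'_x$, which is $16 - 14 = 2$.

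Second, for smoothness of $q : \Lambda^\text{gen}_{C_\KS} \to Z'_\text{reg}$, I would argue that $q$ factors $H$-equivariantly through the slice description: over the locus where $b,d$ are invertible, Lemma~\ref{lem:yKS} exhibits the fibre of $q'$ through the distinguished representatives as the single $Z_H(x_\KS)$-orbit through $y_\KS(b)$ together with the remaining free parameters, and the map $y_\KS(b)\mapsto [b]$ is just the restriction of the Steinberg quotient $\GL_2(\CC)\to Z$ to $\GL_2(\CC)_\text{rss}\to Z_\text{reg}$, which is a smooth (indeed a torsor under the Weyl group, étale-locally trivial) morphism. Pulling back along $Z'_\text{reg}\hookrightarrow Z'$ preserves smoothness, and spreading out over $C_\KS$ via the bundle structure $\Lambda_{C_\KS}\to C_\KS$ — each fibre being $H$-conjugate to $\Lambda'_{x_\KS}$ — gives smoothness of $q$ on all of $\Lambda^\text{gen}_{C_\KS}$. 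One must also check $\Lambda^\text{gen}_{C_\KS}$ is contained in $\Lambda^\text{reg}_{C_\KS}$ (so that it really is an open dense sub-bundle of $\Lambda_{C_\KS}$ and not something larger); this follows since on $\Lambda^\text{gen}_{C_\KS}$ the $H$-orbits have codimension $2$ in $\Lambda_{C_\KS}$, which is the minimum possible by Proposition~\ref{prop:codim2}, so these orbits cannot lie in the closure of $\Lambda_{C'}$ for any $C' > C_\KS$.

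Third, and this is where I expect the real work to be, I would compute the equivariant fundamental group $A^\ABV_{\phi_\KS} = \pi_1^H(\Lambda^\text{gen}_{C_\KS})$. Since $\Lambda^\text{gen}_{C_\KS}$ is an $H$-bundle over $C_\KS$ with fibre the generic part of $\Lambda'_{x_\KS}$ and structure group acting transitively on the base, the equivariant fundamental group over $H$ is the same as the $Z_H(x_\KS)$-equivariant fundamental group of the fibre $\Lambda^\text{gen}_{x_\KS}$. By the slice analysis, $\Lambda^\text{gen}_{x_\KS}$ fibres $Z_H(x_\KS)$-equivariantly over $Z'_\text{reg}$ with fibre a single $Z_H(x_\KS)$-orbit; because $Z'_\text{reg} \cong T'_\text{reg}//W$ and the generic $Z_H(x_\KS)$-orbit through $y_\KS(b)$ has stabilizer containing a torus (the centralizer of the regular semisimple $b$), the equivariant homotopy type is governed by $Z'_\text{reg}$, hence $A^\ABV_{\phi_\KS}$ surjects onto $\pi_1(Z'_\text{reg}) = \pi_1(T'_\text{reg}//W)$. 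The key point is that $T'_\text{reg}//W$ is a quotient of (an open subset of) a torus by $W = \ZZ/2$ acting by inversion, and $W$ acts with fixed points on $T'$; the quotient $Z_\text{reg}$ already has $\pi_1 = \ZZ/2$ coming from this ramified double cover (this is exactly the "quadratic character" alluded to in the introduction and in the statement that $\Evs_{C_\KS}\IC(\1_{C_\psi})$ is quadratic). So I would exhibit a nontrivial class in $\pi_1(Z'_\text{reg})$ — concretely, a loop in $Z'_\text{reg}$ that lifts to a non-closed path in $T'_\text{reg}$ swapping the two eigenvalues — pull it back to $\Lambda^\text{gen}_{C_\KS}$ via $q$, and verify it remains nontrivial $H$-equivariantly by checking it is not killed by the action of $\pi_0(H)$ (which is trivial, $H$ being connected). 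The main obstacle is making the identification $\pi_1^H(\Lambda^\text{gen}_{C_\KS}) \cong \pi_1(Z'_\text{reg})$ rigorous: one needs that the fibres of $q$ (orbit-bundles with torus-type stabilizers) are equivariantly simply connected, so that the long exact sequence of the fibration collapses and $q_*$ is an isomorphism on $\pi_1^H$; this requires knowing the precise stabilizer group scheme along $\Lambda^\text{gen}_{C_\KS}$, which the slice computation in Lemma~\ref{lemma:notPHV} provides (a $\GL_1^2$-type centralizer, which is connected, so the orbit is equivariantly a $K(\pi,1)$ with $\pi$ abelian and the fibration argument goes through).
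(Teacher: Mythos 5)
Your proposal is essentially correct, and for the first two assertions it agrees with the paper's proof: the codimension-$2$ claim is read off from Lemma~\ref{lemma:notPHV} and Proposition~\ref{prop:codim2} exactly as you describe, and smoothness of $q$ is a consequence of local triviality. (The paper proves local triviality somewhat differently than you do — it notes that every fibre $q^{-1}([t])$ is the single $H$-orbit $H/K$ with $K = Z_H(x_\KS, y_\KS(t))$ independent of $t$ — whereas you appeal to smoothness of the Steinberg quotient restricted to $\GL_2(\CC)_\text{rss}$; both are fine, though you should not call that restriction a "torsor under the Weyl group": the $W$-torsor is $T_\text{reg}\to Z_\text{reg}$, while $\GL_2(\CC)_\text{rss}\to Z_\text{reg}$ has fibres $\GL_2/T$.) For the third assertion, your route is genuinely different from the paper's and more elaborate. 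The paper simply exhibits the $H$-equivariant double cover $z' : \widetilde{\Lambda}^\text{gen}_{C_\KS} = \Lambda^\text{gen}_{C_\KS}\times_{Z'_\text{reg}} T'_\text{reg} \to \Lambda^\text{gen}_{C_\KS}$, observes it is an $H$-equivariant \'etale cover, and concludes $A^\ABV_{\phi_\KS}\ne 1$; this is shorter but leaves implicit why the pulled-back cover is not the trivial disconnected one, which requires knowing $\widetilde{\Lambda}^\text{gen}_{C_\KS}$ is connected (one can see this because $q'$ is smooth and surjective with connected fibres — single $H$-orbits — over the connected base $T'_\text{reg}$). Your approach instead computes the equivariant fundamental group completely: reduce to the fibre via the bundle $\Lambda^\text{gen}_{C_\KS}\to C_\KS$, use that $\pi_1^H(H/K)=\pi_0(K)=1$ because $K\cong T\times\Mat_{2,2}(\CC)^2$ is connected, run the long exact sequence of the fibration $q$ on homotopy quotients, and obtain $\pi_1^H(\Lambda^\text{gen}_{C_\KS})\cong\pi_1(Z'_\text{reg})$, which is nontrivial because it admits $\ZZ/2$ as a quotient classifying $z : T'_\text{reg}\to Z'_\text{reg}$. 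This buys a stronger and more self-contained statement (the identification of the full equivariant $\pi_1$ with $\pi_1(Z'_\text{reg})$, and no implicit connectedness claim), at the cost of more bookkeeping. Two small corrections you should make: $\pi_1(Z'_\text{reg})$ is not equal to $\ZZ/2$ — $Z'_\text{reg}$ is an open complement of divisors in $\mathbb{A}^2$ with a larger fundamental group — it merely surjects onto $\ZZ/2$; and the double cover $T'_\text{reg}\to Z'_\text{reg}$ is unramified (the ramified fixed locus $t_1=t_2$ has been removed), which is precisely why it is an honest \'etale cover and why a loop encircling the discriminant locus gives the nontrivial class.
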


\begin{proof}
The first point is baked into the definition of $\Lambda^\text{gen}_{C_\KS}$ using Proposition~\ref{prop:codim2}. 
For the second, observe that $q$ is locally trivializable and 
\[
q^{-1}([t]) = \mathcal{O}_H(x_\KS,y_\KS(t)) \iso H/K
\]
where $K= Z_{H}(x_\KS,y_\KS(t))$ is the group of $h= (h_0,h_1,h_2,h_3,h_4)\in H$ such that $h_0\in T(\CC)$ and
\[
h_1 =
\begin{pmatrix}
h_0 & u\\
0 & h_0
\end{pmatrix}, \quad
h_2 = 
\begin{pmatrix}
h_0 & 0\\
0 & h_0
\end{pmatrix}, \quad
h_3 =
\begin{pmatrix}
h_0 & v\\
0 & h_0
\end{pmatrix}, \quad
h_4 = h_0;
\]
this group $K$ does not depend on $t$.
Finally, consider the pull-back $z' : \widetilde{\Lambda}^\text{gen}_{C_\KS}\to \Lambda^\text{gen}_{C_\KS}$ of $z$ along $q$:
\[
\begin{tikzcd}
\arrow{d}[swap]{z'} \widetilde{\Lambda}^\text{gen}_{C_\KS} \arrow{r}{q'}&  \arrow[dashed]{dl} T'_\text{reg} \arrow{d}{z} \\
\Lambda^\text{gen}_{C_\KS} \arrow{r}{q}&  Z'_\text{reg}.
\end{tikzcd}
\]
Then $z' : \widetilde{\Lambda}^\text{gen}_{C_\KS}\to \Lambda^\text{gen}_{C_\KS}$ is an $H$-equivariant double cover. 
Since the equivariant fundamental group of a connected $H$-variety $S$ is the fundamental group for the site of $H$-equivariant etale covers of $S$, it follows that $A^\ABV_{\phi_\KS}$ is not trivial.
\end{proof}

\subsection{A curious ABV-packet}

\begin{corollary}\label{cor:main}
For any $p$-adic field $F$, the ABV-packet for $\phi_\KS$ contains exactly two representations:
\[
\Pi^\ABV_{\phi_\KS}(\GL_{16}(F)) := 
\{ \pi_\KS, \pi_\psi \}.
\]
\end{corollary}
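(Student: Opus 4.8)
The plan is to deduce Corollary~\ref{cor:main} directly from Theorem~\ref{thm:geometric} together with the dictionary between the representation theory of $\GL_{16}(F)$ at infinitesimal parameter $\lambda$ and the category $\Perv_H(V)$. Since $H^1(F,\GL_{16})$ is trivial, $\GL_{16}$ has no nontrivial pure inner forms, so the ABV-packet $\Pi^\ABV_{\phi_\KS}$, a priori a set of representations of $\GL_{16}(F)$ and its pure inner forms, consists only of representations of $\GL_{16}(F)$ itself. Recall from Section~\ref{ssec:background} and \cite{CFMMX} that, for a Langlands parameter $\phi$ with infinitesimal parameter $\lambda$ and associated $H$-orbit $C$ in $V$, the ABV-packet $\Pi^\ABV_\phi(\GL_{16}(F))$ is the set of irreducible admissible $\pi$ whose corresponding simple perverse sheaf $\IC(\1_{C_\pi})$ has $\Lambda_C$ in its characteristic cycle, equivalently those $\pi$ for which $\Evs_C\IC(\1_{C_\pi})$ is a nonzero equivariant local system on $\Lambda^\text{gen}_C$. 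In particular every member of $\Pi^\ABV_{\phi_\KS}(\GL_{16}(F))$ automatically has infinitesimal parameter $\lambda$, since $\Evs_{C_\KS}$ is a functor out of $\Perv_H(V)$ and $V=V_\lambda$.

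Carrying this out: the local Langlands correspondence for $\GL_{16}$ restricted to infinitesimal parameter $\lambda$ (Section~\ref{ssec:ranktriangles}) is a bijection between irreducible admissible representations of $\GL_{16}(F)$ with infinitesimal parameter $\lambda$ and $H$-orbits in $V$, and since every stabilizer $Z_H(x)$ is connected, these orbits are in bijection with the simple objects $\IC(\1_C)$ of $\Perv_H(V)$, of which there are $1138$. Running over these, Theorem~\ref{thm:geometric} says $\Evs_{C_\KS}\IC(\1_C)$ is nonzero precisely for $C=C_\KS$ and $C=C_\psi$, and of rank $1$ in both cases; hence $\Pi^\ABV_{\phi_\KS}(\GL_{16}(F))=\{\pi_\KS,\pi_\psi\}$ with $\pi_\KS\leftrightarrow C_\KS$ and $\pi_\psi\leftrightarrow C_\psi$. (That $\pi_\KS$ lies in its own ABV-packet is forced, since an ABV-packet always contains the {\it L}-packet of its parameter, a singleton for $\GL_{16}$.) For the refined statement on packet coefficients, one combines the second half of Theorem~\ref{thm:geometric} with Proposition~\ref{prop:L}: $A^\ABV_{\phi_\KS}\iso\mathbb{Z}/2\mathbb{Z}$, the constant local system $\Evs_{C_\KS}\IC(\1_{C_\KS})$ corresponds to its trivial character and hence to $\pi_\KS$, while the quadratic local system $\Evs_{C_\KS}\IC(\1_{C_\psi})$ corresponds to its nontrivial character and hence to $\pi_\psi$.

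The proof contains no genuinely difficult step once Theorem~\ref{thm:geometric} is available; all of the geometry is packaged there. The only points that require care are bookkeeping: confirming that the bijection $\pi\leftrightarrow C_\pi\leftrightarrow\IC(\1_{C_\pi})$ is precisely the one used to normalize the ABV-packet construction of \cite{CFMMX} (including the passage from irreducible submodules to irreducible quotients adopted in Section~\ref{ssec:ranktriangles}), that pure inner forms of $\GL_{16}$ contribute nothing, and that membership in $\Pi^\ABV_{\phi_\KS}$ can be tested by nonvanishing of $\Evs_{C_\KS}$ on $\Lambda^\text{gen}_{C_\KS}$ rather than on all of $\Lambda_{C_\KS}$. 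Each of these is standard given \cite{CFMMX}, so the real content of the corollary rests entirely on Theorem~\ref{thm:geometric}.
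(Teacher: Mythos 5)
Your proof is correct and takes the same route as the paper: the paper's own proof simply cites \cite{CFMMX}*{Definition 1}, notes that $\GL_{16}$ has no nontrivial pure inner forms, and declares the result a direct consequence of Theorem~\ref{thm:geometric}. Your version unpacks the bookkeeping (the orbit--representation--sheaf dictionary, the restriction to infinitesimal parameter $\lambda$, and the test via $\Evs_{C_\KS}$ on $\Lambda^{\text{gen}}_{C_\KS}$) that the paper's citation compresses, but the logical content is identical.
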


\begin{proof}
Using \cite{CFMMX}*{Definition 1}, this corollary is a direct consequence of Theorem~\ref{thm:geometric}, which we will prove in Section~\ref{sec:geometric}, relying on tools explained in Appendix~\ref{sec:computations}.
We note that general linear groups have no pure inner forms.
\end{proof}

\section{The main geometric result}\label{sec:geometric}

Recall the functor 
$
\Evs_C : \Perv_{H}(V) \to \Loc_{H}(\Lambda_C^\text{gen})
$
from \cite{CFMMX}*{Section 7.9}.

\begin{theorem}\label{thm:geometric}
\[
\Evs_{C_\KS} \IC(\1_{C}) = 
\begin{cases} 
\1_{\Lambda^\text{gen}_{C_\KS}} & C= C_\KS,\\
\mathcal{L}_{\Lambda^\text{gen}_{C_\KS}} & C= C_\psi.\\
0, & \text{otherwise},
\end{cases}
\]
where $\mathcal{L}_{\Lambda^\text{gen}_{C_\KS}}$ is the rank-$1$ local system defined by the double cover $z' :  \widetilde{\Lambda}^\text{gen}_{C_\KS} \to \Lambda^\text{gen}_{C_\KS}$ appearing in the proof of Proposition~\ref{prop:L}. 
\end{theorem}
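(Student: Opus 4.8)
\emph{Plan.} The plan is to compute $\Evs_{C_\KS}\IC(\1_C)$ one orbit $C$ at a time, using the following features of $\Evs_{C_\KS}$ established in \cite{CFMMX}*{Section 7.9}: it is exact and $H$-equivariant; $\Evs_{C_\KS}\mathcal{P}$ is an equivariant local system on $\Lambda^\text{gen}_{C_\KS}$; its rank equals the coefficient of $\overline{\Lambda_{C_\KS}}$ in the characteristic cycle $\ChC(\mathcal{P})$; and it is computable from $\mathcal{P}$ restricted to a transverse slice $S$ to $C_\KS$ at $x_\KS$, where for $\mathcal{P}=\IC(\1_C)$ it becomes a microlocal vanishing-cycles invariant at the vertex of the slice $\overline{C}\cap S$. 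Since the singular support of $\IC(\1_C)$ is contained in $\bigcup_{C'\leq C}\overline{\Lambda_{C'}}$, we immediately obtain $\Evs_{C_\KS}\IC(\1_C)=0$ whenever $C_\KS\not\leq C$. It therefore remains to analyse the finitely many orbits $C$ with $C_\KS\leq C$, which we enumerate from the rank-triangle conditions of Section~\ref{ssec:ranktriangles}.

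The orbit $C=C_\KS$ is immediate: at a generic conormal vector, $\IC(\1_{C_\KS})$ is the constant sheaf on the smooth variety $C_\KS$, and the microlocal vanishing cycles of a constant sheaf on a smooth variety along a generic covector is one-dimensional with trivial monodromy, so $\Evs_{C_\KS}\IC(\1_{C_\KS})=\1_{\Lambda^\text{gen}_{C_\KS}}$. For the remaining orbits $C$ with $C_\KS<C$ we must show $\Evs_{C_\KS}\IC(\1_C)$ vanishes unless $C=C_\psi$; by the rank property this is the assertion that $\overline{\Lambda_{C_\KS}}$ occurs in $\ChC(\IC(\1_C))$ only for $C\in\{C_\KS,C_\psi\}$, and then with coefficient $1$. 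We verify this with the symbolic computation of Appendix~\ref{sec:computations}, which produces the intersection cohomology of the slices $\overline{C}\cap S$ and the induced microlocal multiplicities at the vertex, the outcome being that the vertex of $S$ contributes only for $C\in\{C_\KS,C_\psi\}$ and with multiplicity one. For $C=C_\psi$ this recovers \cite{KS}*{Theorem~7.2.1}, which asserts only that the coefficient is nonzero, and sharpens it to coefficient exactly $1$; see Remark~\ref{rem:KS}.

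It remains to identify the rank-$1$ local system $\Evs_{C_\KS}\IC(\1_{C_\psi})$. Since $q\colon\Lambda^\text{gen}_{C_\KS}\to Z'_\text{reg}$ is $H$-invariant (the $\GL_2(\CC)$-conjugacy class of $(ac)^{-1}(bd)$ is $H$-invariant) with fibres single $H$-orbits $H/K$ and $K$ connected, this local system is pulled back along $q$ from a finite-order rank-$1$ local system $\mathcal{M}$ on $Z'_\text{reg}$. We compute $\mathcal{M}$ by an explicit analysis of the vanishing cohomology of the Kashiwara--Saito singularity in its family over $Z'_\text{reg}$: the one-dimensional vanishing cohomology is realized as the anti-invariant line for the $W$-action interchanging the two eigenvalues of $(ac)^{-1}(bd)$, so $\mathcal{M}$ is trivialized by the double cover $z\colon T'_\text{reg}\to Z'_\text{reg}$ and a loop interchanging the two eigenvalues acts by $-1$. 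Hence $\mathcal{M}=\mathcal{L}_{Z'_\text{reg}}$ is the sign character of $z$, and $\Evs_{C_\KS}\IC(\1_{C_\psi})=q^*\mathcal{L}_{Z'_\text{reg}}=\mathcal{L}_{\Lambda^\text{gen}_{C_\KS}}$, the local system attached to the double cover $z'$ of Proposition~\ref{prop:L}.

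Two steps carry the weight. The most laborious is the orbit-by-orbit verification that $\overline{\Lambda_{C_\KS}}$ appears in $\ChC(\IC(\1_C))$ for no $C$ other than $C_\KS$ and $C_\psi$: this is precisely the locus of the Kashiwara--Saito phenomenon and of the failure of Lusztig's hope \cite{Lusztig:Quivers}*{13.7}, and it relies essentially on the algorithms of Appendix~\ref{sec:computations}. The subtler point is the last one: the rank of $\Evs_{C_\KS}\IC(\1_{C_\psi})$ does not by itself determine the local system, and computing its monodromy — showing the eigenvalue-swap loop acts by $-1$, so that we obtain the nontrivial double cover $z'$ rather than the constant sheaf — is exactly what forces the ABV-packet $\Pi^\ABV_{\phi_\KS}(\GL_{16}(F))$ to fail to be a singleton.
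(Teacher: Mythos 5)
There is a genuine gap in the $C=C_\KS$ case, and it is the foundation on which the rest of your argument leans. You claim $\Evs_{C_\KS}\IC(\1_{C_\KS})=\1_{\Lambda^\text{gen}_{C_\KS}}$ is ``immediate'' because the microlocal vanishing cycles of a constant sheaf on a smooth variety at a generic covector have trivial monodromy. That is not a theorem: the rank is automatically $1$, but the monodromy is a character of $A^\ABV_{\phi_\KS}$ of order at most $2$ (\cite{CFMMX}*{Section 7.10}), and here $A^\ABV_{\phi_\KS}$ is \emph{nontrivial} (Proposition~\ref{prop:L}), so there is a genuine dichotomy. The criterion of \cite{CFMMX}*{Theorem 7.7.5} says the character is trivial if and only if the Hessian determinant of $f$ is locally a square at a generic point of $\Lambda^\text{gen}_{C_\KS}$; this is a computational fact, verified by the Macaulay2 work in Section~\ref{ssec:case-KS} and Appendix~\ref{app:Hessian}. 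The paper explicitly cautions at the end of Section~\ref{ssec:case-psi} that such Hessian determinants ``are often not perfect squares,'' with examples from \cite{CFMMX} where the answer is the nontrivial character. Your ``immediate'' is therefore an unjustified assumption, not a proof.

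Two further problems: first, enumerating all $C$ with $C_\KS\leq C$ is not tractable --- the paper needs Proposition~\ref{prop:ordering} (nonvanishing of $\Evs_{C_\KS}\IC(\1_C)$ forces $C_\KS\leq C$ \emph{and} $C_\KS\leq\widehat{C}$) together with Lemma~\ref{lem:Kittsix} to cut the list to six orbits beyond $C_\KS$ and then duality to halve it to four explicit cases $C_r,C_m,C_R,C_\psi$; without that reduction your plan is not executable. Second, for the identification of $\Evs_{C_\KS}\IC(\1_{C_\psi})$ you assert that the vanishing cohomology over $Z'_\text{reg}$ ``is realized as the anti-invariant line for the $W$-action,'' but this is the conclusion, not an argument. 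The paper establishes it by constructing the double cover $\rho'''_\psi$ from the singular locus of $\tilde f_\psi^{-1}(0)$, checking via a second Hessian computation that the local system pushed forward has rank $1$ and is constant upstairs, and then proving in Section~\ref{ssec:L} that $\rho'''_\psi$ is $H$-equivariantly isomorphic to $z'$ --- none of which follows from general principles or from the cited result of Kashiwara--Saito, which only gives a lower bound on the characteristic-cycle multiplicity, not the monodromy.
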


The proof of Theorem~\ref{thm:geometric} will occupy all of Section~\ref{sec:geometric}.
Some of the supporting computations for the proof are explained in Appendix~\ref{sec:computations}.

\begin{corollary}\label{cor:KS}
The rank of $\Lambda_{C_\KS}$ in the characteristic cycles of $\IC(\1_{C_\psi})$ is $1$ and the rank of $\Lambda_{C_\psi}$ in the characteristic cycles of $\IC(\1_{C_\psi})$ is $1$.
\end{corollary}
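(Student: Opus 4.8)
The plan is to read Corollary~\ref{cor:KS} off Theorem~\ref{thm:geometric} via the standard dictionary between the functors $\Evs_C$ and characteristic cycles. Recall from the microlocal formalism of \cite{CFMMX}*{Section 7.9} that for any $\mathcal{P}\in\Perv_H(V)$ one has
\[
\operatorname{CC}(\mathcal{P}) = \sum_{C} \rank\left(\Evs_C\mathcal{P}\right)\, [\overline{\Lambda_C}],
\]
the sum running over the $H$-orbits $C$ in $V$; in other words the multiplicity of (the closure of) $\Lambda_C$ in $\operatorname{CC}(\mathcal{P})$ -- what the statement of the corollary calls the \emph{rank} of $\Lambda_C$ -- is exactly the rank of the equivariant local system $\Evs_C\mathcal{P}$ on $\Lambda^\text{gen}_C$.

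First I would handle the coefficient of $\Lambda_{C_\KS}$. From the rank triangles of Table~\ref{table:ranks}, together with the closure-order description in Section~\ref{ssec:ranktriangles}, we have $C_\KS\subseteq\overline{C_\psi}$ with $\dim C_\KS = 32 < 40 = \dim C_\psi$, so $C_\KS$ is a genuine boundary stratum of $\overline{C_\psi}$ and $\Lambda_{C_\KS}$ may occur in $\operatorname{CC}(\IC(\1_{C_\psi}))$. Specializing the displayed formula to $\mathcal{P}=\IC(\1_{C_\psi})$ and $C=C_\KS$, Theorem~\ref{thm:geometric} tells us $\Evs_{C_\KS}\IC(\1_{C_\psi}) = \mathcal{L}_{\Lambda^\text{gen}_{C_\KS}}$, which has rank $1$ by Proposition~\ref{prop:L}; hence the coefficient of $\Lambda_{C_\KS}$ in $\operatorname{CC}(\IC(\1_{C_\psi}))$ is $1$.

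For the coefficient of $\Lambda_{C_\psi}$ in $\operatorname{CC}(\IC(\1_{C_\psi}))$ -- the top term -- I would invoke the elementary fact that the characteristic cycle of $\IC(\mathcal{L})$ contains the conormal bundle of the open stratum supporting $\mathcal{L}$ with multiplicity $\rank\mathcal{L}$: over the smooth locus $C_\psi$ the complex $\IC(\1_{C_\psi})$ is $\1_{C_\psi}$ up to shift, whose characteristic cycle over $C_\psi$ is $[T^*_{C_\psi}V]$ with coefficient one, and passing to closures yields $[\overline{\Lambda_{C_\psi}}]$ with coefficient $1$. In the language above this is just the statement $\rank\left(\Evs_{C_\psi}\IC(\1_{C_\psi})\right) = 1$ -- the constant rank-$1$ local system -- which is again part of the microlocal formalism of \cite{CFMMX}, and is the $C_\KS$-case already recorded in Theorem~\ref{thm:geometric} with the roles of $C_\psi$ and $C_\KS$ interchanged. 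The only delicate point, and the one I would take care with, is matching the shift and twist conventions of \cite{CFMMX} so that ``rank of $\Evs_C\mathcal{P}$'' equals ``multiplicity in $\operatorname{CC}(\mathcal{P})$'' with no stray sign or shift; once that bookkeeping is in place the corollary is immediate from Theorem~\ref{thm:geometric}.
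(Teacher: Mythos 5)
Your proposal matches the paper's proof: both read the corollary directly off Theorem~\ref{thm:geometric}, using the dictionary from \cite{CFMMX}*{Section 7.11} identifying $\rank\Evs_C\mathcal{P}$ with the multiplicity of $\overline{\Lambda_C}$ in $\operatorname{CC}(\mathcal{P})$, together with the fact that the top stratum contributes with multiplicity one. One small inaccuracy worth flagging: your parenthetical claim that $\rank\Evs_{C_\psi}\IC(\1_{C_\psi})=1$ is ``the $C_\KS$-case already recorded in Theorem~\ref{thm:geometric} with roles interchanged'' is not right, since Theorem~\ref{thm:geometric} only computes $\Evs_{C_\KS}$ and says nothing about $\Evs_{C_\psi}$; the paper instead cites \cite{CFMMX}*{Proposition 7.6.2}, which is the general statement $\rank\Evs_C\IC(\1_C)=1$ -- precisely the elementary top-stratum fact you also invoke, just sourced correctly.
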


\begin{proof}
By \cite{CFMMX}*{Proposition 7.13}, the rank of $\Evs_{C_\psi} \IC(\1_{C_\psi})$ is $1$. 
By Theorem~\ref{thm:geometric}, the rank of $\Ev_{C_\KS} \IC(\1_{C_\psi})$ is $1$. 
By \cite{CFMMX}*{Section 7.11}, this proves the corollary.
\end{proof}

\begin{remark}\label{rem:KS}
By \cite{KS}*{Theorem 7.2.1}, the rank of $\Lambda_{C_\KS}$ in the characteristic cycles of $\IC(\1_{C_\psi})$ is at least $1$ and the rank of $\Lambda_{C_\psi}$ in the characteristic cycles of $\IC(\1_{C_\psi})$ is at least $1$. 
Thus, Theorem~\ref{thm:geometric} is stronger than \cite{KS}*{Theorem 7.2.1}.
The remark following \cite{KS}*{Theorem 7.2.1} promises -- but does not prove -- that the characteristic cycles of $\IC(\1_{C_\psi})$ is exactly $\Lambda_{C_\KS}$ and $\Lambda_{C_\psi}$. 
This is consistent with Theorem~\ref{thm:geometric}.
%%%%%
\end{remark}

For use in the arguments below, we define $y_\KS:  T'_\text{reg} \to \Lambda^\text{gen}_{x_\KS}$ by 
\begin{equation}\label{eqn:yKSt}
y_\KS(t_1,t_2) \ceq y_\KS\left(\begin{smallmatrix} t_1 & 1 \\ 0 & t_2 \end{smallmatrix}\right).
\end{equation}
%\todo{In regards to (5): I think we talked about this once, and if I recall correctly, we can use a diagonal matrix in eqn (15) but the code portion was already done using this matrix, so would have been a hassle to change it}
Then $q(y_\KS(t_1,t_2)) = z(t_1,t_2)$ for $\diag(t_1,t_2)\in T'_\text{reg}$. 
The results of Section~\ref{ssec:generic} show that $y_\KS(T'_\text{reg})$ is a parametrized slice of $\Lambda^\text{gen}_{x_\KS}$, intersecting each $Z_H(x_\KS)$-orbit exactly twice.
%The orbit $\mathcal{O}_H((x_\KS,y_\KS(t))$ is transverse to $y_\KS:  T'_\text{reg} \to \Lambda^\text{gen}_{x_\KS}$ at $(x_\KS,y_\KS(t))$.
%\begin{remark}
We use this parametrized slice of $\Lambda^\text{gen}_{x_\KS}$ because it makes the singularities of the cover $\rho_\psi : \widetilde{C}_\psi \to \overline{C}_\psi$ above $x_\KS\in C_\KS\subset \overline{C}_\psi$ easily accessible in the affine chart for $\widetilde{C}_\psi$ that intersects $\rho_\psi^{-1}(x_\KS)$ appearing in Section~\ref{ssec:case-psi}. 
We return to the consequences of this choice in Section~\ref{ssec:L}.
%\end{remark}

\subsection{$\Evs_{C_\KS}\IC(\1_{C_\KS})$}\label{ssec:case-KS}

We start the proof of Theorem~\ref{thm:geometric} with
\begin{equation}\label{KS}
\Evs_{C_\KS}\IC(\1_{C_\KS}) = \1_{\Lambda^\text{gen}_{C_\KS}} .
\end{equation}
The local system $\Evs_{C_\KS}\IC(\1_{C_\KS})$ is studied in \cite{CFMMX}*{Section 7.10}, where it is shown that it is a rank-$1$ local system corresponding to a character of $A^\ABV_{\phi_\KS}$ of order at most $2$.
Equation~\eqref{KS} is equivalent to the claim that this character is trivial.
Pick $(x,y) = (x_\KS,y_\KS(t))\in \Lambda^\text{gen}_{C_\KS}$ so that we may identify $A^\ABV_{\phi_\KS}$ with the equivariant fundamental group of $\Lambda^\text{gen}_{C_\KS}$ at $(x,y)$ and find the character as a representation of the one-dimensional vector space 
$\Evs_{C_\KS}\IC(\1_{C_\KS})_{(x,y)}$. 
Use \cite{CFMMX}*{Theorem 7.19} to see that the character of $A^\ABV_{\phi_\KS}$ associated to $\Evs_{C_\KS}\IC(\1_{C_\KS})$ is trivial if and only if the Hessian determinant of $f$ at $(x,y)$ is a square in the local coordinate ring of the singular locus at this point. 
To that end, we employ the following method, using Macaulay2 code documented in Appendix~\ref{app:Hessian}.
\begin{enumerate}
\item Use the Jacobian of the ideal defining the closure of $C_\KS\times C^*_\KS$ to identify a system of variables, from the polynomial ring over which $C_\KS\times C^*_\KS$ is defined, that may be used as local coordinates for the (completed) local ring at the point $(x_\KS,y_\KS(t))$.
\item Use implicit partial differentiation to solve for the partial derivatives of the other variables with respect to the chosen system of local coordinates.
\item Use partial differentiation with the chain rule to compute the Hessian of $f$ as a function in the completed local ring. Recall the Hessian is a symmetric matrix of functions from the completed local ring.
\item Verify that the rank of the Hessian evaluated at $(x,y)$ is $e_{C_\KS} = \dim C_\KS - \codim C_\KS^*  = 32-(48-32)=16$.
\item Find a $16\times 16$  minor of the Hessian whose rank when evaluated at $(x,y)$ is 16.
\item Find an ordering of these $16$ variables that makes it clear the determinant of this minor is the square of a function in the local ring. 
\end{enumerate}
Using this method, we found that the Hessian determinant is locally the perfect square of a function and hence $\Evs_{C_\KS}\IC(\1_{C_\KS})$ is the constant local system.

\begin{remark}
These calculations verify the claim about points of $ \Lambda^\text{gen}_{C_\KS}$ even if $(x,y)$ is not generic because the rank of the Hessian is $16$ and the calculations
give us the same conclusion on an open neighbourhood of $(x,y)$ in $ \Lambda^\text{gen}_{C_\KS}$, which must include generic vectors.
\end{remark}

\subsection{Reduction of the problem}
\label{ssec:reduction}

For each orbit $C$ in $V$, there exists a unique orbit $C^*$ for $V^*$ with
$ \overline{\Lambda_C} \iso \overline{\Lambda_{C^*}}$; see, for example, \cite[Corollary 2]{Pyasetskii}.
The map $C \mapsto C^*$ defines a bijection between the set of $H$-orbits in $V$ and the set of $H$-orbits in $V^*$.
Another bijection between $H$-orbits in $V^*$ and $H$-orbits in $V$, called transposition,
is $t: V^*\to V$ defined by $y=(y_1,y_2,y_3,y_4) \mapsto \,^ty\ceq (\,^ty_4,\,^ty_3,\,^ty_2,\,^ty_1)$, where $\,^ty_i$ is the transpose of the matrix $y_i$. 
Now set 
\[
{\widehat C} \ceq \,^tC^*.
\]
Then $C\mapsto {\widehat C}$ is an involution on the set of $H$-orbits in $V$.
The involution $C\mapsto {\widehat C}$ on $H$-orbits in $V$ is the Zelevinsky involution on multisegments; we refer the reader to \cite{KZ} for more in this direction. 
The orbits $C_\KS$ and $C_\psi$ are self-dual:
\[
\widehat{C}_\KS = C_\KS\qquad\text{ and }\qquad\widehat{C}_\psi = C_\psi.
\]

\begin{proposition}\label{prop:ordering}
If $\Evs_{C}\IC(\1_{C'}) \neq 0$ then
%\begin{enumerate}
%\item 
$\Evs_{\widehat C}\IC(\1_{\widehat C'}) \neq 0$ and
%\item  
$C\leq C'$ and $ {\widehat C} \leq {\widehat C'}$.
%\end{enumerate}
\end{proposition}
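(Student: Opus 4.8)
\textbf{Strategy.} The statement has two parts: a "duality" symmetry $\Evs_C\IC(\1_{C'})\neq 0 \Rightarrow \Evs_{\widehat C}\IC(\1_{\widehat C'})\neq 0$, and a "support" constraint $C\leq C'$ and $\widehat C\leq \widehat{C'}$. The plan is to deduce the first from the compatibility of the vanishing-cycles functor $\Evs$ with the two involutions on $V$ coming from Fourier transform (the duality $C\mapsto C^*$, via \cite{Pyasetskii}) and transposition ($t:V^*\to V$); and to deduce the second from the standard fact that $\Evs_C$ only sees the singular support, which for a simple perverse sheaf $\IC(\1_{C'})$ is contained in $\overline{\Lambda_{C'}}$.

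\textbf{Step 1: the support constraint.} By \cite[Proposition 6.3.1]{CFMMX} and the discussion in Section~\ref{ssec:generic}, the local system $\Evs_C\IC(\1_{C'})$ is obtained by restricting the vanishing cycles of $\IC(\1_{C'})$ along a transverse slice to $\Lambda^\text{gen}_C$, and this is nonzero only if the singular support (characteristic cycle) of $\IC(\1_{C'})$ meets $\Lambda_C^\text{gen}\subset \Lambda_C$. Since the characteristic cycle of $\IC(\1_{C'})$ is supported on $\overline{\Lambda_{C'}}$, and $\Lambda_C\cap\overline{\Lambda_{C'}}\neq\emptyset$ forces $\overline{\Lambda_C}\subseteq\overline{\Lambda_{C'}}$ (as the $\Lambda_C$ are the irreducible components of the conormal variety $\Lambda$, stratified by $C$), we get $\Lambda_C\subseteq\overline{\Lambda_{C'}}$. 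Projecting to $V$ along $\Lambda\to V$ gives $C\subseteq\overline{C'}$, i.e. $C\leq C'$. Combined with the first part (applied to $\widehat C,\widehat{C'}$) this yields $\widehat C\leq\widehat{C'}$, so the second conclusion follows once the first is established.

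\textbf{Step 2: the duality symmetry.} Here I would invoke the behaviour of $\Evs$ under the Fourier transform $\Ft:\Perv_H(V)\to\Perv_H(V^*)$. By \cite{CFMMX}*{Section 7} (this is part of the construction of $\Evs$ via vanishing cycles of the function $f=\langle x,y\rangle$ on $V\times V^*$, which is manifestly symmetric in $V$ and $V^*$), one has a canonical identification $\Evs_C\,\mathcal{P}\iso \Evs_{C^*}\,\Ft(\mathcal{P})$ up to shift, where $C^*$ is Pyasetskii's dual orbit; in particular $\Evs_C\IC(\1_{C'})\neq 0$ iff $\Evs_{C^*}\Ft(\IC(\1_{C'}))\neq 0$. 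Next, Fourier transform permutes the simple perverse sheaves $\IC(\1_{C'})$ on $V$ with those on $V^*$; for the $A_n$ quiver this permutation, followed by the transposition isomorphism $t:V^*\simeq V$ (which is an equivalence $\Perv_H(V^*)\simeq\Perv_H(V)$ sending $\IC(\1_{D})\mapsto\IC(\1_{\,^tD})$), realizes the Zelevinsky involution $C'\mapsto\widehat{C'}=\,^t(C')^*$ on multisegments --- this is exactly the content recalled in Section~\ref{ssec:reduction}, citing \cite{KZ}. Finally $t$ intertwines $\Evs_{C^*}$ on $V^*$ with $\Evs_{\,^tC^*}=\Evs_{\widehat C}$ on $V$. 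Composing these three identifications gives $\Evs_C\IC(\1_{C'})\neq 0 \iff \Evs_{\widehat C}\IC(\1_{\widehat{C'}})\neq 0$, which is the first conclusion.

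\textbf{Main obstacle.} The delicate point is Step 2: pinning down precisely how $\Evs$ transforms under Fourier transform and under transposition, and checking that the composite is the Zelevinsky involution on the nose (not just up to relabelling), with all the shifts bookkept so that "nonzero" is preserved. The compatibility of $\Evs$ with $\Ft$ should follow from the symmetry of the defining function $f(x,y)=\langle x,y\rangle$ and the microlocal description of $\Evs$ in \cite{CFMMX}*{Section 7.9--7.11}; the identification of $\Ft$ composed with $t$ with the Zelevinsky involution is classical for type-$A$ quivers but I would want to cite it carefully (it is implicit in Section~\ref{ssec:reduction}). Everything else --- the support estimate, the equivalence of orbit closure and multisegment order --- is routine given the earlier sections.
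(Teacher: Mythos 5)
Your Step 2 is essentially the paper's argument: the paper reformulates $\Evs_C\IC(\1_{C'})\neq 0$ as "$\Lambda_C$ appears in the characteristic cycle of $\IC(\1_{C'})$" (via \cite[Proposition 7.4.1]{CFMMX} and \cite[Proposition 8.6.4]{KS_sheaves}), and then invokes \cite[Proposition 7.2]{evens1997} together with \cite[Theorem 5.5.5]{KS_sheaves} to transport this condition across the Fourier transform to the dual side. That is the same Fourier-transform-plus-transposition mechanism you describe; the paper simply fixes the identifications you flagged as "delicate" by citing those two results rather than rederiving the compatibility of $\Evs$ with $\Ft$. The ordering statement $C\leq C'$ the paper then dispatches in one line directly from the definition of $\Evs_C$, since $\IC(\1_{C'})$ is supported on $\overline{C'}$, $\Lambda^{\text{gen}}_C$ lies over $C$, and $\overline{C'}$ is a union of $H$-orbits.

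However, your Step 1 contains a genuine error, and it is a pointed one given what this paper is about. You assert that the characteristic cycle of $\IC(\1_{C'})$ is supported on $\overline{\Lambda_{C'}}$, and that $\Lambda_C\cap\overline{\Lambda_{C'}}\neq\emptyset$ forces $\Lambda_C\subseteq\overline{\Lambda_{C'}}$. Both claims are false. The variety $\overline{\Lambda_{C'}}$ is irreducible (it is the closure of the irreducible conormal bundle $\Lambda_{C'}$), so it cannot contain any other irreducible component $\Lambda_C$ with $C\neq C'$; and distinct irreducible components of a variety can of course meet without one containing the other. In fact the central theme of the paper --- the Kashiwara-Saito phenomenon --- is precisely that the characteristic cycle of $\IC(\1_{C_\psi})$ contains the extra component $\Lambda_{C_\KS}$, which is \emph{not} contained in $\overline{\Lambda_{C_\psi}}$, contrary to Lusztig's hope. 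The correct containment is that the characteristic cycle of $\IC(\1_{C'})$ lies in the full conormal variety of $\overline{C'}$, namely $T^*_{\overline{C'}}(V) = \bigcup_{C\leq C'}\Lambda_C$, which is reducible. From this (or, more directly, from the observation that $\Evs_C$ evaluates data over the orbit $C$, while $\IC(\1_{C'})$ is supported over $\overline{C'}$, a union of orbits) the conclusion $C\leq C'$ does follow; but your stated justification would not survive scrutiny and should be replaced by the support/conormal-variety argument.
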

\begin{proof}
By combining \cite{CFMMX}*{Proposition 7.8} and \cite{KS_sheaves}*{Proposition 8.6.4} we have that $\Evs_{C}\IC(\1_{C'})$ is non-zero if and only if $\Lambda_C$ is contained in the characteristic cycles of $\IC(\1_{C'})$.
Combining \cite[Proposition 7.2]{evens1997} and \cite[Theorem 5.5.5]{KS_sheaves} it follows that $\Lambda_C$ is contained in the characteristic cycles of $\IC(\1_{C'}) $ if and only if
$\Lambda_{\widehat{C}}$  is contained in the characteristic cycles of $\IC(\1_{\widehat{C}'}) $.
Thus $\Evs_{C}\IC(\1_{C'}) \neq 0$ if and only if $\Evs_{\widehat C}\IC(\1_{\widehat C'}) \neq 0$.
By the definition of $\Evs_{C}$ it is clear that $\Evs_{C}\IC(\1_{C'}) \neq 0$ implies $C\leq C'$ and correspondingly that $\Evs_{\widehat C}\IC(\1_{\widehat C'}) \neq 0$ implies
$ {\widehat C} \leq {\widehat C'}.$
\end{proof}

\begin{remark}
Proposition~\ref{prop:ordering} is equivalent to the following statement:
\[
a^* \left(\left(\RPhi_{f} \left(\IC(\1_{C})\boxtimes\1^!_{C^*}\right) \right)\vert_{\Lambda^\text{gen}_{C}}\right)
= 0 \iff
\left(\RPhi_{f}\left( \IC(\1_{C^*})\boxtimes\1^!_{C}\right)\right) \vert_{\Lambda^\text{gen}_{C^*}} = 0,
\]
where $a: T^*(V)\to T^*(V^*)$ is defined by $a(x,y) = (y,-x)$ using $(V^*)^*\iso V$.
%\todo{I am not 100\% certain the equivalence is true, primarily because the microlocal fundamental groups are potentially non-trivial so there could hypothetically be twisting sheaves. It is definitely true if you weaken to $LHS !=0$ iff $RHS!=0$. \\ Good point! Fixed. -CC}
\end{remark}

\begin{lemma}\label{lem:Kittsix}
Besides $C_\KS$, there are exactly six $H$-orbits $C$ that satisfy the conditions
\[
C_\KS \leq C\quad \text{and} \quad C_\KS \leq \widehat{C}.
\]
These six orbits are listed in Table~\ref{table:Kittsix}.
\end{lemma}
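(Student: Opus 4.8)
The plan is to translate both inequalities into entrywise conditions on rank triangles and then carry out a finite search. Recall from Section~\ref{ssec:ranktriangles} that $H$-orbits in $V$ are in bijection with rank triangles having top row $2,4,4,4,2$ (equivalently, with multisegments of support $\underline m_\lambda$), that $C\le C'$ holds exactly when $r_{ij}(C)\le r_{ij}(C')$ for all $1\le j\le i\le 4$, and that the involution $C\mapsto\widehat C$ is the Zelevinsky involution, which can be computed either as the transpose of the Pyasetskii dual, as in Section~\ref{ssec:reduction}, or by the combinatorial algorithm on the associated multisegment (see \cite{KZ}). The rank triangle of $C_\KS$ is the right-hand triangle in Table~\ref{table:ranks}.

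First I would extract the constraints imposed by $C_\KS\le C$. Comparing entrywise with the rank triangle of $C_\KS$ and using the intrinsic inequalities that any rank triangle satisfies --- each $r_{ij}$ is bounded above by the two entries immediately above it, and $r_{ik}-r_{ij}\le r_{lk}-r_{lj}$ for $l<i$, $k<j$ --- one finds that $r_{44}=r_{11}=2$ are forced, that $r_{33},r_{22}\in\{2,3,4\}$, that $r_{32}\ge 2$, and that $r_{43},r_{21},r_{42},r_{31},r_{41}$ are otherwise subject only to nonnegativity and the rank-triangle inequalities. This confines $C$ to a short, explicit list of orbits, which one then enumerates directly.

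Next, for each $C$ on this list I would compute $\widehat C$ --- either from the rank triangle via $\widehat C={}^tC^*$, or by passing to the multisegment, applying the Zelevinsky involution, and returning --- and test whether $r_{ij}(C_\KS)\le r_{ij}(\widehat C)$ for all $i,j$. The orbits passing both tests are, besides $C_\KS$ itself, exactly the six orbits recorded in Table~\ref{table:Kittsix}; one of them is $C_\psi$, in accordance with $C_\KS\le C_\psi$ and $\widehat C_\psi=C_\psi\ge C_\KS$ --- which is also forced by Proposition~\ref{prop:ordering} together with the inclusion of $\Lambda_{C_\KS}$ in the characteristic cycles of $\IC(\1_{C_\psi})$.

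The main obstacle is that the Zelevinsky involution is in general neither order-preserving nor order-reversing, so the condition $C_\KS\le\widehat C$ cannot be deduced from $C_\KS\le C$ and genuinely has to be tested orbit by orbit; the verification is thus a finite but not wholly transparent computation, which one carries out using the infrastructure behind Appendix~\ref{ssec:generalcomputations} and whose outcome is recorded in Table~\ref{table:Kittsix}.
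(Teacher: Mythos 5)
Your proposal is correct and takes essentially the same route as the paper, which simply runs the finite search by iterating over all $1138$ orbits in Macaulay2 (Appendix~\ref{ssec:generalcomputations}) and keeping those $C$ with $C_\KS < C$ and $C_\KS < \widehat C$; you describe the same computation with a modest refinement (first filter by the entrywise rank-triangle test $C_\KS\le C$, then compute $\widehat C={}^tC^*$ and test the second condition), and you correctly identify the key point that the involution $C\mapsto\widehat C$ is neither order-preserving nor order-reversing, so the second test genuinely must be run orbit by orbit. The only caveat is that the intermediate list $\{C : C_\KS\le C\}$ is not especially short, so in practice one does exactly what the paper's code does and lets the machine enumerate.
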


\begin{proof}
The proof is given by a direct calculation using Macaulay2 as explained in Appendix~\ref{ssec:generalcomputations}.
\end{proof}

\begin{table}[htp]
\caption{The six $H$-orbits $C\subset V$ that satisfy the relations $C_\KS < C$ and $C_\KS< \widehat{C}$.}
\label{table:Kittsix}
\begin{center}
$
%\begin{smaller}
\begin{array}{ccc}
C_L & C_\psi & C_R\\ 
\begin{tikzpicture}
\node(A) at (0,0) {$2$};
\node(B) at (\rtwidth,0) {$4$};
\node(C) at (2*\rtwidth,0) {$4$};
\node(L) at (3*\rtwidth,0) {$4$};
\node(M) at (4*\rtwidth,0) {$2$};
%line under first row
\draw (0,-0.5*\rtheight) -- (4*\rtwidth,-0.5*\rtheight);
\node(D) at (0.5*\rtwidth,-\rtheight) {$2$};
\node(E) at (1.5*\rtwidth,-\rtheight) {$4$};
\node(d) at (2.5*\rtwidth,-\rtheight) {$2$};
\node(e) at (3.5*\rtwidth,-\rtheight) {$2$};
\node(F) at (\rtwidth,-2*\rtheight) {$2$};
\node(G) at (2*\rtwidth,-2*\rtheight) {$2$};
\node(H) at (3*\rtwidth,-2*\rtheight) {$0$};
\node(I) at (1.5*\rtwidth,-3*\rtheight) {$0$};
\node(J) at (2.5*\rtwidth,-3*\rtheight) {$0$};
\node(K) at (2*\rtwidth,-4*\rtheight) {$0$};
\end{tikzpicture}
&
\begin{tikzpicture}
\node(A) at (0,0) {$2$};
\node(B) at (\rtwidth,0) {$4$};
\node(C) at (2*\rtwidth,0) {$4$};
\node(L) at (3*\rtwidth,0) {$4$};
\node(M) at (4*\rtwidth,0) {$2$};
%line under first row
\draw (0,-0.5*\rtheight) -- (4*\rtwidth,-0.5*\rtheight);
\node(D) at (0.5*\rtwidth,-\rtheight) {$2$};
\node(E) at (1.5*\rtwidth,-\rtheight) {$3$};
\node(d) at (2.5*\rtwidth,-\rtheight) {$3$};
\node(e) at (3.5*\rtwidth,-\rtheight) {$2$};
\node(F) at (\rtwidth,-2*\rtheight) {$1$};
\node(G) at (2*\rtwidth,-2*\rtheight) {$2$};
\node(H) at (3*\rtwidth,-2*\rtheight) {$1$};
\node(I) at (1.5*\rtwidth,-3*\rtheight) {$1$};
\node(J) at (2.5*\rtwidth,-3*\rtheight) {$1$};
\node(K) at (2*\rtwidth,-4*\rtheight) {$0$};
\end{tikzpicture}
&
\begin{tikzpicture}
\node(A) at (0,0) {$2$};
\node(B) at (\rtwidth,0) {$4$};
\node(C) at (2*\rtwidth,0) {$4$};
\node(L) at (3*\rtwidth,0) {$4$};
\node(M) at (4*\rtwidth,0) {$2$};
%line under first row
\draw (0,-0.5*\rtheight) -- (4*\rtwidth,-0.5*\rtheight);
\node(D) at (0.5*\rtwidth,-\rtheight) {$2$};
\node(E) at (1.5*\rtwidth,-\rtheight) {$2$};
\node(d) at (2.5*\rtwidth,-\rtheight) {$4$};
\node(e) at (3.5*\rtwidth,-\rtheight) {$2$};
\node(F) at (\rtwidth,-2*\rtheight) {$0$};
\node(G) at (2*\rtwidth,-2*\rtheight) {$2$};
\node(H) at (3*\rtwidth,-2*\rtheight) {$2$};
\node(I) at (1.5*\rtwidth,-3*\rtheight) {$0$};
\node(J) at (2.5*\rtwidth,-3*\rtheight) {$0$};
\node(K) at (2*\rtwidth,-4*\rtheight) {$0$};
\end{tikzpicture}
\\
C_l & C_m & C_r\\
\begin{tikzpicture}
\node(A) at (0,0) {$2$};
\node(B) at (\rtwidth,0) {$4$};
\node(C) at (2*\rtwidth,0) {$4$};
\node(L) at (3*\rtwidth,0) {$4$};
\node(M) at (4*\rtwidth,0) {$2$};
%line under first row
\draw (0,-0.5*\rtheight) -- (4*\rtwidth,-0.5*\rtheight);
\node(D) at (0.5*\rtwidth,-\rtheight) {$2$};
\node(E) at (1.5*\rtwidth,-\rtheight) {$3$};
\node(d) at (2.5*\rtwidth,-\rtheight) {$2$};
\node(e) at (3.5*\rtwidth,-\rtheight) {$2$};
\node(F) at (\rtwidth,-2*\rtheight) {$1$};
\node(G) at (2*\rtwidth,-2*\rtheight) {$2$};
\node(H) at (3*\rtwidth,-2*\rtheight) {$0$};
\node(I) at (1.5*\rtwidth,-3*\rtheight) {$0$};
\node(J) at (2.5*\rtwidth,-3*\rtheight) {$0$};
\node(K) at (2*\rtwidth,-4*\rtheight) {$0$};
\end{tikzpicture}
&
\begin{tikzpicture}
\node(A) at (0,0) {$2$};
\node(B) at (\rtwidth,0) {$4$};
\node(C) at (2*\rtwidth,0) {$4$};
\node(L) at (3*\rtwidth,0) {$4$};
\node(M) at (4*\rtwidth,0) {$2$};
%line under first row
\draw (0,-0.5*\rtheight) -- (4*\rtwidth,-0.5*\rtheight);
\node(D) at (0.5*\rtwidth,-\rtheight) {$2$};
\node(E) at (1.5*\rtwidth,-\rtheight) {$3$};
\node(d) at (2.5*\rtwidth,-\rtheight) {$3$};
\node(e) at (3.5*\rtwidth,-\rtheight) {$2$};
\node(F) at (\rtwidth,-2*\rtheight) {$1$};
\node(G) at (2*\rtwidth,-2*\rtheight) {$2$};
\node(H) at (3*\rtwidth,-2*\rtheight) {$1$};
\node(I) at (1.5*\rtwidth,-3*\rtheight) {$0$};
\node(J) at (2.5*\rtwidth,-3*\rtheight) {$0$};
\node(K) at (2*\rtwidth,-4*\rtheight) {$0$};
\end{tikzpicture}
&
\begin{tikzpicture}
\node(A) at (0,0) {$2$};
\node(B) at (\rtwidth,0) {$4$};
\node(C) at (2*\rtwidth,0) {$4$};
\node(L) at (3*\rtwidth,0) {$4$};
\node(M) at (4*\rtwidth,0) {$2$};
%line under first row
\draw (0,-0.5*\rtheight) -- (4*\rtwidth,-0.5*\rtheight);
\node(D) at (0.5*\rtwidth,-\rtheight) {$2$};
\node(E) at (1.5*\rtwidth,-\rtheight) {$2$};
\node(d) at (2.5*\rtwidth,-\rtheight) {$3$};
\node(e) at (3.5*\rtwidth,-\rtheight) {$2$};
\node(F) at (\rtwidth,-2*\rtheight) {$0$};
\node(G) at (2*\rtwidth,-2*\rtheight) {$2$};
\node(H) at (3*\rtwidth,-2*\rtheight) {$1$};
\node(I) at (1.5*\rtwidth,-3*\rtheight) {$0$};
\node(J) at (2.5*\rtwidth,-3*\rtheight) {$0$};
\node(K) at (2*\rtwidth,-4*\rtheight) {$0$};
\end{tikzpicture}
\end{array}
%\end{smaller}
$
\end{center}
\end{table}%

%By \cite{CFMMX}*{Proposition 7.6.2}, 
%$
%\Evs_{C_\KS} \IC(\1_{C_\KS}) = \mathcal{T}_{\Lambda^\text{gen}_{C_\KS}}
%$
%for rank-1 local system $\mathcal{T}_{\Lambda^\text{gen}_{C_\KS}}$ appearing in \cite{CFMMX}*{Section 7.10}; we remark that this local system corresponds to a character of order at most $2$ of the $A^\ABV_{\phi_\KS}$ appearing in Section~\ref{ssec:generic}.
%\todo[inline,backgroundcolor=blue!50]{
%The above feels out of place, I feel it is a remark that belongs where ever we claim that $\Evs_{C_\KS} \IC(\1_{C_\KS})  = \1$
%
%Because I don't know what to write in 3.3 maybe we could just state it as a claim here and cite the Appendix as proof or we could move this whole discussion to 3.3.
%}

The proof of Theorem~\ref{thm:geometric} is now reduced to showing 
%$\Evs_{C_\KS} \IC(\1_{C_\KS}) = \1_{\Lambda^\text{gen}_{C_\KS}}$ and
 $\Evs_{C_\KS} \IC(\1_{C}) =0$ for the six orbits $C$ from Lemma~\ref{lem:Kittsix}
 and
 $
\Evs_{C_\KS} \IC(\1_{C_\psi}) = \mathcal{L}_{\Lambda^\text{gen}_{C_\KS}}.
$
We have already seen that the multisegments for $C_{\psi}$ and  $C_\KS$ are self-dual, hence $\widehat{C}_{\psi} = C_{\psi}$ and $\widehat{C}_\KS = C_\KS$. 
A simple calculation, illustrated in Appendix~\ref{ssec:generalcomputations},  shows that $C_{R} = \widehat{C}_{L}$ and $C_r = \widehat{C}_l$.
It now follows from Proposition~\ref{prop:ordering} that
\[
\Evs_{C_\KS} \IC(\1_{C_L}) = 0 \qquad\iff \qquad \Evs_{C_\KS} \IC(\1_{C_R}) =0
\]
and
\[
\Evs_{C_\KS} \IC(\1_{C_l}) = 0 \qquad\iff \qquad \Evs_{C_\KS} \IC(\1_{C_r}) =0.
\]
Therefore, the proof of Theorem~\ref{thm:geometric} reduces to the following four statements:
\begin{eqnarray}
\label{r}
\Evs_{C_\KS}\IC(\1_{C_r}) &=& 0\\
\label{m}
\Evs_{C_\KS}\IC(\1_{C_m}) &=& 0 \\
\label{R}
\Evs_{C_\KS}\IC(\1_{C_R}) &=& 0\\
%\end{eqnarray}
%and
%\begin{eqnarray}
\label{psi}
\Evs_{C_\KS}\IC(\1_{C_\psi }) &= & \mathcal{L}_{\Lambda^\text{gen}_{C_\KS}} .
\end{eqnarray}

\subsection{Overview of the calculations}\label{ssec:overview}

We now explain the strategy of the proofs of Equations~\eqref{r}, \eqref{m}, \eqref{R} and \eqref{psi}.

We have seen that the rank triangle for every $H$-orbit $C$ records the equations that define $C$ as a variety in $V$:
\[
C = \{ x\in V \tq \rank(x_i \cdots x_j) = r_{ij}, 1\leq j\leq i\leq 4 \},
\]
where $r_{ij}$ refer to the terms appearing in the rank triangle for $C$.
The closure of $C$ in $V$ is then given by 
\[
\overline{C} = \{ x\in V \tq \rank(x_i \cdots x_j) \leq r_{ij}, 1\leq j\leq i\leq 4 \}.
\]
The rank triangle can also be used to construct a smooth $H$-variety $\widetilde{C}$ and an $H$-equivariant proper morphism
\[
\rho: {\widetilde C} \to {\overline C}.
\]
We show how this is done in each of the cases $C_r$, $C_m$, $C_R$, $C_\psi$ in Sections~\ref{ssec:case-r} through \ref{ssec:case-psi}.

Now let $C$ be any of the orbits $C_r$, $C_m$, $C_R$ or $C_\psi$. Then, by construction, $\rho : \widetilde{C} \to \overline{C}$ is an isomorphism over $C$. It follows from the decomposition theorem \cite{BBD}*{Th\'eor\`eme 6.2.5}, \cite{deCM} that the push forward $\rho_! \1_{\widetilde{C}}[\dim \widetilde{C}]$ is a semisimple complex containing $\IC(\1_{C})$:
so
\[
\rho_! \1_{\widetilde{C}}[\dim \widetilde{C}] = \IC(\1_{C}) \oplus \mathop{\bigoplus}\limits_{C',i} m_i(C',C) \IC(\1_{C'})[d_{C',C,i}],
\]
for non-negative integers $m_i(C',C)$ and integers $d_{C',C,i}$, where $C'$ ranges over all orbits $C'< C$ and $i$ ranges over $\ZZ$.
%We compute these integers and find that they are $0$ or $1$ in all cases considered here.
%\todo{line 1769: I don't think we actually compute any to be $0$. Many of the terms in the decomposition vanish, for a different reason; namely, exactness of Ev plus the orbit not being in our list of 7}

\begin{remark}
In fact, each $\rho$ is semi-small, so the shift integers $d_{C,C',i}$ are all $0$, and we can calculate all strata $C'$ that are relevant to the cover, in the sense of \cite{deCM}*{Definition 4.2.3}, namely those $C'$ for which $m_i(C',C) \ne 0$; 
we only actually need to verify this for a small number of orbits in one case, see Remark \ref{rem:check15}.
The code which does these checks is documented in Appendix~\ref{ssec: MVC appendix}. We expect that the procedure we use to construct $\rho$ will always yield a semi-small cover.
\end{remark}

By \cite{CFMMX}*{Proposition 7.8}, 
\begin{equation}\label{eqn:Evs}
\Evs_{C_\KS} \rho_! \1_{\widetilde{C}}[\dim \widetilde{C}] = \Evs_{C_\KS} \IC(\1_{C}) \oplus \mathop{\bigoplus}\limits_{C',i} m_i(C',C) \Evs_{C_\KS} \IC(\1_{C'})[d_{C',C,i}],
\end{equation}
where the sum is taken over the $H$-orbits $C'$ such that $C_\KS\leq C'$ and $C_\KS \leq \widehat{C'}$ and $C'<C$; by Lemma~\ref{lem:Kittsix} there are at most 7 such $C'$.
Our strategy to calculate $\Evs_{C_\KS} \IC(\1_{C})$ is to use Equation~\eqref{eqn:Evs} inductively, starting with small orbits $C$, and in each case calculate $\Evs_{C_\KS} \rho_! \1_{\widetilde{C}}[\dim \widetilde{C}]$.

Using \cite{CFMMX}*{Lemma 7.11}, the left-hand side of Equation~\eqref{eqn:Evs} is given by
\[
\Evs_{C_\KS} \rho_! \1_{\widetilde{C}} [\dim \widetilde{C}]
=
\rho''_!
\left(
\left(
\RPhi_{\tilde{f}} \1_{\widetilde{C}\times C^*_\KS} 
\right)
\big\vert_{\mathcal{\tilde O}}
\right)
[\dim \widetilde{C}-\codim C_\KS^* ],
\]
where ${\tilde f} : \widetilde{C} \times C^*_\KS \to \mathbb{A}^1$ is defined by ${\tilde f}({\tilde x},y) = \KPair{\rho({\tilde x})}{y}$, $\KPair{}{} : V\times V^*\to \mathbb{A}^1$ is the usual pairing, $\rho'=\rho'$, $\mathcal{\tilde O} \ceq (\rho')^{-1}(\Lambda^\text{gen}_{C_\KS})$ and $\rho'' : \mathcal{\tilde O}\to \Lambda^\text{gen}_{C_\KS}$ is the pullback of $\rho'$ along $\Lambda^\text{gen}_{C_\KS} \to {\overline C}\times C_\KS^*$; see the following diagram:
\[
\begin{tikzcd}
U \arrow{d} & \arrow{l} U\times C_\KS^* \arrow{d} &   &   & \\
\widetilde{C} \arrow{d}{\rho} & \arrow{l} \widetilde{C}\times C_\KS^* \arrow{d}{\rho'} & \arrow{l} \mathcal{\tilde O} \arrow{d}{\rho''} & \arrow{l} \sing {\tilde f}^{-1}(0) \cap \mathcal{\tilde O} \arrow{dl}{\rho'''}  \\
\overline{C} &  \arrow{l} \overline{C}\times C_\KS^* \arrow{d}{\KPair{}{}} &  \arrow{l} \Lambda^\text{gen} _{C_\KS} \arrow{d} & \\
& \mathbb{A}^1 &  \arrow{l} \{ 0 \}  & 
\end{tikzcd}
\]
Both ${\tilde f}$ and $\rho''$ are $H$-equivariant.
By \cite{CFMMX}*{Proposition 7.20 and Lemma 7.21}, we know that
\[
\RPhi_{\tilde{f}}[-1] \1_{\widetilde{C}\times C^*_\KS} [\dim C+\dim C^*_\KS]
\]
is a perverse sheaf supported on the singular locus of $\tilde{f}$ which is contained in $\overline{\mathcal{\tilde O}}$ (note that, {\it a priori} it was supported on  ${\tilde f}^{-1}(0)$). %\todo{note changes here}
We are interested in the restriction of this perverse sheaf to $\mathcal{\tilde O}$ which, when shifted by $-\dim V$, is a finite rank local system with support $(\sing{\tilde f}^{-1}(0))\cap \mathcal{\tilde O}$.
Set
\begin{equation}\label{eqn:M}
\mathcal{M} \ceq \left(\RPhi_{\tilde{f}}[-1] \1_{\widetilde{C}\times C^*_\KS}\right) \vert_{ (\sing{\tilde f}^{-1}(0))\cap \mathcal{\tilde O}} [\dim C-\codim C^*_\KS].
\end{equation}
Then 
\[
\Evs_{C_\KS} \rho_! \1_{\widetilde{C}} [\dim \widetilde{C}]
=
\rho'''_!
\mathcal{M}
\]
where $\rho'''$ is the restriction of $\rho''$ to $(\sing{\tilde f}^{-1}(0))\cap \mathcal{\tilde O}$.
%\todo{line 1807: the second $\rho'''$ should be $\rho''$, right?}
In each of the four cases appearing in Equations~\eqref{r} through \eqref{psi}, we use the Jacobian condition for smoothness to study $(\sing{\tilde f}^{-1}(0))\cap \mathcal{\tilde O}$.
We use the following general strategy.
\begin{enumerate}
\item We compute, by hand, the fibre of $\rho$ above $x_\KS$. In all cases, we find that the fibre is a single point or projective.

\item We choose an affine chart $U$ for $\widetilde{C}$ that intersects $\rho^{-1}(x_\KS)$. 
We form the Jacobian for the system of equations that describe $(U \times C_{\KS}^*) \cap \tilde{f}^{-1}(0)$. We then perform Steps 4-7 for each of these Jacobians. In the case that $\rho^{-1}(x_\KS)$ is projective we will need to vary $U$ over an atlas for $\widetilde{C}$.

\item We compute the generic rank of the Jacobian from Step 2. 
%Since this rank calculation does not depend on the affine chart used, it only needs to be performed once. %This is also the generic rank of $M$. %Note that $m$ must be less than or equal to this generic rank.

\item We evaluate the Jacobian at the generic element $(x_\KS,y_\KS(t_1,t_2))$ and further add the conditions on the variables in the Jacobian that correspond to $U \cap \rho^{-1}(x_\KS)$.
The Jacobian is now a matrix $M$ over some polynomial ring $\mathbb{Q}[t_1,t_2, X_1,\dots, X_s]$, where the $X_i$ are variables used to describe the fibre from Step 2; in the case that the fibre is a single point, there are no $X_i$.
\iffalse
\item[$\bullet$] If the fibre of $\rho$ above $x_\KS$ is $(\mathbb{P}^1)^{\times n}$ for some $n\geq 1$, then the $X_i$ are exactly the variables used to describe the fibre. In this case, $r=\frac{n}{2}$.
\fi
\item Next, we perform elementary row and column operations on the matrix $M$. In each case, we are left with a block diagonal matrix whose blocks consist of an identity matrix $I_m$ of some size $m$, and a matrix $B$ over $\mathbb{Q}[t_1,t_2,X_1,\dots,X_s]$.

\item The conditions that describe when the rank of $M$ is strictly less than the generic rank from Step 4 are now encoded in the matrix $B$. This gives simple equations for $\sing({\tilde f}^{-1}(0)\cap (U\times C_\KS^*))\cap (\rho')^{-1}(x_\KS,y_\KS(t_1,t_2))$.

\item Return to Step 2 until $\widetilde{C}\cap\rho^{-1}(x_{\KS})$ has been covered by affine charts $U$ for $\widetilde{C}$. This determines $(\sing{\tilde f}^{-1}(0))\cap (\rho')^{-1}(x_\KS,y_\KS(t_1,t_2))$.
Finally, letting $H$ act on this set we find $(\sing{\tilde f}^{-1}(0))\cap \mathcal{\tilde O}$.
\end{enumerate}

In the cases where $C$ is either $C_r$, $C_m$ or $C_R$, the intersection $(\sing{\tilde f}^{-1}(0))\cap \mathcal{\tilde O}$ is empty, so
$
\Evs_{C_\KS} \rho_! \1_{\widetilde{C}}[\dim \widetilde{C}]
= \rho'''_! \mathcal{M}
=0.
$
This gives Equations~\eqref{r}, \eqref{m} and \eqref{R}. In the only remaining case, when $C=C_\psi$, we find that the intersection 
$
(\sing{\tilde f}^{-1}(0))\cap \mathcal{\tilde O}
$
is non-empty and the map $\rho_\psi'''$ is $2:1$.
We find the rank of the local system $\mathcal{M}$ and the corresponding representation of the fundamental group $\Lambda^\text{gen}_{C_\KS}$ by computing the Hessian of ${\tilde f}$ near a singular point of ${\tilde f}$ in $\mathcal{\tilde O}$.
After verifying that the rank of this Hessian is the codimension of the singular locus, we conclude that the rank of $\mathcal{M}$ is $1$. Further, the determinant of this Hessian determinant then identifies the cover which trivializes $\mathcal{M}$.
The strategy to compute the Hessian locally near a point in the singular locus of ${\tilde f}$ is the same strategy employed in Section~\ref{ssec:case-KS} and documented in detail in Appendix~\ref{app:Hessian}.

\begin{remark}
All of the calculations we are describing are performed without finding a Gr\"{o}bner basis for the ideal defining  ${\mathcal{\tilde O}}$ as computing this would have been intractable.
\end{remark}

\subsection{$\Evs_{C_\KS}\IC(\1_{C_r})$}\label{ssec:case-r}

In this section we prove Equation~\eqref{r}; see Appendix~\ref{Assec: Cr} for a detailed description of the code used for the computations in this section.

We begin by defining $\rho_r : \widetilde{C}_r \to \overline{C}_r$.
%
%\todo{Optional: Two consecutive ``recalls''. We aren't displaying this here, so maybe reword.}
Recall the rank triangle for $C_r$ from Table~\ref{table:Kittsix}.
\iffalse%%%%
\[
\begin{array}{c}
\underline{m}_r = \{2\{-2,-1\},2\{-1,0,1\},\{0\},\{0,1,2\},\{1,2\} \}\\
\\
C_r \\ 
\begin{tikzpicture}
\node(A) at (0,0) {$2$};
\node(B) at (1,0) {$4$};
\node(C) at (2,0) {$4$};
\node(L) at (3,0) {$4$};
\node(M) at (4,0) {$2$};
%line under first row
\draw (0,-0.25) -- (4,-0.25);
\node(D) at (0.5,-0.5) {$2$};
\node(E) at (1.5,-0.5) {$2$};
\node(d) at (2.5,-0.5) {$3$};
\node(e) at (3.5,-0.5) {$2$};
\node(F) at (1,-1) {$0$};
\node(G) at (2,-1) {$2$};
\node(H) at (3,-1) {$1$};
\node(I) at (1.5,-1.5) {$0$};
\node(J) at (2.5,-1.5) {$0$};
\node(K) at (2,-2) {$0$};
\end{tikzpicture}
\end{array}
\]
\fi%%%%
%We use the same method described in Section~\ref{ssec:case-R} to construct a proper cover $\rho_r:\widetilde{C}_r\rightarrow\overline{C}_r$ and realizes $\widetilde{C}_r$ as a smooth variety. The diagram appearing in Table~\ref{table:Cr} defines a subvariety of $\overline{C}_r\times\prod_{i=0}^4\mathcal{F}(E_{\lambda_i})$ by the relations: 
%\iffalse%%%
As we saw in Section~\ref{ssec:ranktriangles},
the top row describes the infinitesimal parameter of the representations by listing the multiplicites of the eigenvalues of $\lambda(\Frob)$; these eigenvalues are, from right to left in the rank triangle, $\lambda_0=q^{-2}$, $\lambda_1 = q^{-1}$, $\lambda_2 = q^{0}$, $\lambda_3= q^{1}$, $\lambda_4 = q^{2}$ and the multiplicities of these eigenvalues are, in the same order, $2$, $4$, $4$, $4$, $2$.
For each eigenvalue $\lambda_i$, consider the complete flag variety $\mathcal{F}(E_{\lambda_i})$ for $\GL(E_{\lambda_i})$, which is to say, consider the projective variety of complete flags in $E_{\lambda_i}$; points on this variety are given by a chain of subspaces of $E_{\lambda_i}$
\[
0 \subset E_{\lambda_i}^1 \subset E_{\lambda_i}^2 \subset \cdots \subset E_{\lambda_i},
\]
where $\dim E_{\lambda_i}^j = j$.
The action of $H$ on the flag variety is defined by letting $h\in H$ send this chain to
\[
0 \subset h_i(E_{\lambda_i}^1) \subset h_i(E_{\lambda_i}^2) \subset \cdots \subset h_i(E_{\lambda_i}),
\]
We make one complete chain for each eigenspace and then use the rank-triangle to introduce relations on these flags, thus defining a subvariety of $\prod_{i=0}^4 \mathcal{F}(E_{\lambda_i})$, as follows.
%\todo[inline]{Is $GL$ supposed to be the generalized flag variety?\\ Clifton wrote this, but I think we mean $\GL$ to be a complete flag variety, and I think our use of complete here means generalized. So, yes? \\ Yes. Thanks. Fixed now.}
\begin{enumerate}
\item
Looking at the second row of the rank triangle for $C_r$ we see the ranks $(2,2,3,2)$. This means, for any $x=(x_4,x_3,x_2,x_1)\in C_r$, we have $\rank  x_1 = 2$, $\rank x_2 = 3$, $\rank x_3 = 2$ and $\rank x_4 = 2$. 
\begin{enumerate}
\item Since $x_1 : E_{\lambda_0} \to E_{\lambda_1}$ and $\rank x_1 = 2$, we declare that the image of $x_1$ should be contained in $E_{\lambda_1}^2$, which is to say $x_1(E_{\lambda_0})\subseteq E_{\lambda_1}^2$.
\item Since $x_2 : E_{\lambda_1} \to E_{\lambda_2}$ and $\rank x_2 = 3$, we declare that the image of $x_2$ should be contained in $E_{\lambda_2}^3$, so $x_2(E_{\lambda_1})\subseteq E_{\lambda_2}^3$. 
\item Since $x_3 : E_{\lambda_2} \to E_{\lambda_3}$ and $\rank x_3 = 2$, we declare that the image of $x_3$ should be contained in $E_{\lambda_3}^2$, so $x_3(E_{\lambda_2})\subseteq E_{\lambda_3}^2$. 
\item Since $x_4 : E_{\lambda_3} \to E_{\lambda_4}$ and $\rank x_4 = 2$, we declare that the image of $x_4$ should be contained in $E_{\lambda_4}^2$. This is no condition at all since $E_{\lambda_4}^2= E_{\lambda_4}$.
\end{enumerate}
\item
Looking at the third row of the rank triangle for $C_r$ we see the ranks $(0,2,1)$. This means $\rank  x_2 x_1\leq 1$, $\rank x_3 x_2 \leq 2$ and $\rank x_4 x_3 \leq 0$.
\begin{enumerate}
\item Since $x_2 x_1 : E_{\lambda_0} \to E_{\lambda_2}$ and $\rank x_2 x_1 \leq 1$, we declare $x_2(E_{\lambda_1}^2)\subseteq E_{\lambda_2}^1$. Note that this is compatible with the condition
$x_2 x_1(E_{\lambda_0})\subseteq E_{\lambda_2}^1$ from the rank triangle and condition $x_1(E_{\lambda_0})\subseteq E_{\lambda_1}^2$ from above.

\item Since $x_3 x_2 : E_{\lambda_1} \to E_{\lambda_3}$ and $\rank x_3 x_2 \leq 2$, we declare $x_3(E_{\lambda_2}^3)\subseteq E_{\lambda_3}^2$. Note that this is compatible with $x_3x_2(E_{\lambda_1})\subseteq E_{\lambda_3}^2$ and $x_2(E_{\lambda_1})\subseteq E_{\lambda_2}^3$.

\item Since $x_4 x_3 : E_{\lambda_2} \to E_{\lambda_4}$ and $\rank x_4 x_3 \leq 0$, we declare 
$x_4(E_{\lambda_3}^2)=0$. Note that this is compatible with $x_4 x_3(E_{\lambda_2})=0$ and $x_3(E_{\lambda_2})\subseteq E_{\lambda_3}^2$.
\end{enumerate}
\item
Looking at the fourth row of the rank triangle for $C_r$ we see the ranks $(0,0)$. This means $\rank  x_3 x_2 x_1 \leq 0$ and $\rank x_4 x_3 x_2 \leq 0$, or equivalently, $x_3 x_2 x_1 = 0$ and $x_4 x_3 x_2 = 0$.
\begin{enumerate}
\item Since $x_3 x_2 x_1 : E_{\lambda_0} \to E_{\lambda_3}$ and $x_3 x_2 x_1 = 0$, we declare $x_3(E_{\lambda_2}^1)=0$. Note that this is compatible with $x_3 x_2 x_1(E_{\lambda_0})=0$ and $x_2 x_1(E_{\lambda_0})\subseteq E_{\lambda_2}^1$.

\item Since $x_4 x_3 x_2 : E_{\lambda_1} \to E_{\lambda_4}$ and $ x_4 x_3 x_2  = 0$, we declare $x_4(E_{\lambda_3}^2)=0$, which is not a new condition.
\end{enumerate}
\item
Looking at the bottom entry in the rank triangle for $C_r$ we see the rank $(0)$. This means $\rank  x_4 x_3 x_2 x_1 \leq 0$ or equivalently, $x_4 x_3 x_2 x_1 = 0$. Since $x_3 x_2 x_1 = 0$, this gives no new condition.
\end{enumerate}
 The diagram appearing in Table~\ref{table:Cr} defines a variety $\widetilde{C}_r$ in two steps, as follows. First, consider the subvariety of $\overline{C}_r\times\prod_{i=0}^4\mathcal{F}(E_{\lambda_i})$ defined by the relations: 
%\fi%%%%%
\begin{align*}
&x_1(E_{\lambda_0}) \subseteq E_{\lambda_1}^2  & &x_2(E_{\lambda_1})\subseteq E_{\lambda_2}^3 & &x_3(E_{\lambda_2}) \subseteq E_{\lambda_3}^2\\
&x_2(E_{\lambda_1}^2)\subseteq E_{\lambda_2}^1 & &x_3(E_{\lambda_2}^3)\subseteq E_{\lambda_3}^2\\
&x_3(E_{\lambda_2}^1)=0 & &x_4(E_{\lambda_3}^2)=0.
\end{align*}
%\todo{Should we include this? : Note that this realizes $\widetilde{C}$ as a projective variety in a product of partial flag varieties for $\GL_{2}$ and $\GL_{4}$.}
Second, let $\widetilde{C}_r$ be the quotient defined by now removing the subspaces not appearing in the relations above, which is to say, by removing the vector spaces $E_{\lambda_0}^1$, $E_{\lambda_1}^1$, $E_{\lambda_1}^3$, $E_{\lambda_2}^2$, $E_{\lambda_3}^1$, $E_{\lambda_3}^3$ and $E_{\lambda_4}^1$. 
%Let $\widetilde{C}_r$ be the subvariety in the product of $\overline{C}_r$ and the partial flag variety defined by the equations above and summarized in Table~\ref{table:Cr}.
Now define $\rho_r : \widetilde{C_r} \to \overline{C_r}$ by projection, so $\rho_r(x,E) = x$ where $E$ is the partial flag $(E_{\lambda_0}, E_{\lambda_1}^2, E_{\lambda_1},E_{\lambda_2}^1, E_{\lambda_2}^3, E_{\lambda_2}, E_{\lambda_3}^2)$. Then $\rho_r$ is $H$-equivariant.

\begin{table}[tp]
\caption{Equations defining $\widetilde{C}_r$.}
\label{table:Cr}
\begin{center}
\begin{tikzpicture}
% nodes
\node (A) at (0, 0)    {$0$};
\node (B) at (0.5,0)   {$\subset$};
\node (C) at (1,0)     {$E_{\lambda_0}^1$};
\node (a) at (1.5,0)   {$\subset$};
\node (b) at (2,0)     {$E_{\lambda_0}^2$};
\node (D) at (2.5,0)   {$=$};
\node (E) at (3,0)     {$E_{\lambda_0}$};
\node (F) at (0, -1.25)   {0};
\node (G) at (0.5,-1.25)  {$\subset$};
\node (H) at (1,-1.25)    {$E_{\lambda_1}^1$};
\node (I) at (1.5,-1.25)  {$\subset$};
\node (J) at (2,-1.25)    {$E_{\lambda_1}^2$};
\node (K) at (2.5,-1.25)  {$\subset$};
\node (L) at (3,-1.25)    {$E_{\lambda_1}^3$};
\node (M) at (3.5,-1.25)  {$\subset$};
\node (N) at (4,-1.25)    {$E_{\lambda_1}^4$};
\node (u) at (4.5,-1.25)  {$=$};
\node (uu)at (5,-1.25)    {$E_{\lambda_1}$};
\node (O) at (0, -2.5)   {0};
\node (P) at (0.5,-2.5)  {$\subset$};
\node (Q) at (1,-2.5)    {$E_{\lambda_2}^1$};
\node (R) at (1.5,-2.5)  {$\subset$};
\node (S) at (2,-2.5)    {$E_{\lambda_2}^2$};
\node (T) at (2.5,-2.5)  {$\subset$};
\node (U) at (3,-2.5)    {$E_{\lambda_2}^3$};
\node (V) at (3.5,-2.5)  {$\subset$};
\node (W) at (4,-2.5)    {$E_{\lambda_2}^4$};
\node (ww)at (4.5,-2.5)  {$=$};
\node (xx)at (5,-2.5)    {$E_{\lambda_2}$};
\node (X) at (0, -3.75)   {0};
\node (Y) at (0.5,-3.75)  {$\subset$};
\node (Z) at (1,-3.75)    {$E_{\lambda_3}^1$};
\node (AA) at (1.5,-3.75)  {$\subset$};
\node (BB) at (2,-3.75)    {$E_{\lambda_3}^2$};
\node (CC) at (2.5,-3.75)  {$\subset$};
\node (DD) at (3,-3.75)    {$E_{\lambda_3}^3$};
\node (EE) at (3.5,-3.75)  {$\subset$};
\node (FF) at (4,-3.75)    {$E_{\lambda_3}^4$};
\node (ff)at  (4.5,-3.75)  {$=$};
\node (gg)at  (5,-3.75)    {$E_{\lambda_3}$};
\node (g) at (0, -5)    {$0$};
\node (h) at (0.5,-5)   {$\subset$};
\node (i) at (1,-5)     {$E_{\lambda_4}^1$};
\node (j) at (1.5,-5)   {$\subset$};
\node (k) at (2,-5)     {$E_{\lambda_4}^2$};
\node (l) at (2.5,-5)   {$=$};
\node (m) at (3,-5)     {$E_{\lambda_4}$};

% arrows
\draw[->] (b) to node [right] {} (J);
\draw[->] (J) to node [right] {} (Q);
\draw[->] (Q) to node [right] {} (X);
\draw[->] (N) to node [right] {} (U);
\draw[->] (BB) to node [right] {} (g);
\draw[->] (U) to node [right] {} (BB);
%\draw[->] (FF) to node [right] {} (k);
\draw[->] (W) to node [right] {} (BB);
\end{tikzpicture}
\end{center}
\end{table}%
\begin{remark}
By construction, $\rho_r$ induces an isomorphism over $C_r$. One can calculate the dimensions of the fibres for each orbit $C<C_r$ to verify the map is semi-small; see Appendix~\ref{Assec: Cr}.
In this way we find from the decomposition theorem that
\[ {\rho_r}_! \1_{\widetilde{C}_r}[\dim \widetilde{C}_r] 
= \IC(\1_{C_r}) \oplus \mathop{\bigoplus}\limits_{C<C_r} m(C,C_r) \IC(\1_{C}) . \]
The $C<C_r$ for which $m(C,C_r)\neq 0$ are:
\begin{center}
\begin{tikzpicture}
\node(A) at (0,0) {$2$};
\node(B) at (\rtwidth,0) {$4$};
\node(C) at (2*\rtwidth,0) {$4$};
\node(L) at (3*\rtwidth,0) {$4$};
\node(M) at (4*\rtwidth,0) {$2$};
%line under first row
\draw (0,-0.5*\rtheight) -- (4*\rtwidth,-0.5*\rtheight);
\node(D) at (0.5*\rtwidth,-\rtheight) {$2$};
\node(E) at (1.5*\rtwidth,-\rtheight) {$2$};
\node(d) at (2.5*\rtwidth,-\rtheight) {$2$};
\node(e) at (3.5*\rtwidth,-\rtheight) {$1$};
\node(F) at (\rtwidth,-2*\rtheight) {$0$};
\node(G) at (2*\rtwidth,-2*\rtheight) {$1$};
\node(H) at (3*\rtwidth,-2*\rtheight) {$1$};
\node(I) at (1.5*\rtwidth,-3*\rtheight) {$0$};
\node(J) at (2.5*\rtwidth,-3*\rtheight) {$0$};
\node(K) at (2*\rtwidth,-4*\rtheight) {$0$};
\end{tikzpicture}
\qquad
\begin{tikzpicture}
\node(A) at (0,0) {$2$};
\node(B) at (\rtwidth,0) {$4$};
\node(C) at (2*\rtwidth,0) {$4$};
\node(L) at (3*\rtwidth,0) {$4$};
\node(M) at (4*\rtwidth,0) {$2$};
%line under first row
\draw (0,-0.5*\rtheight) -- (4*\rtwidth,-0.5*\rtheight);
\node(D) at (0.5*\rtwidth,-\rtheight) {$2$};
\node(E) at (1.5*\rtwidth,-\rtheight) {$2$};
\node(d) at (2.5*\rtwidth,-\rtheight) {$2$};
\node(e) at (3.5*\rtwidth,-\rtheight) {$2$};
\node(F) at (\rtwidth,-2*\rtheight) {$0$};
\node(G) at (2*\rtwidth,-2*\rtheight) {$1$};
\node(H) at (3*\rtwidth,-2*\rtheight) {$1$};
\node(I) at (1.5*\rtwidth,-3*\rtheight) {$0$};
\node(J) at (2.5*\rtwidth,-3*\rtheight) {$0$};
\node(K) at (2*\rtwidth,-4*\rtheight) {$0$};
\end{tikzpicture}.
\end{center}
As we know by  Lemma~\ref{lem:Kittsix} that for these two $C$ that $\Evs_{C_\KS}\IC(\1_{C}) = 0$ we can conclude that  Equation~\eqref{eqn:Evs} ultimately takes the form
\begin{equation}\label{eqn:Evs-r}
 \Evs_{C_\KS} {\rho_r}_! \1_{\widetilde{C}_r}[\dim \widetilde{C}_r]  =  \Evs_{C_\KS} \IC(\1_{C_r}) 
\end{equation}
Below, we calculate the left hand side and find that $\Evs_{C_\KS} {\rho_r}_! \1_{\widetilde{C}_r}[\dim \widetilde{C}_r] =0$. From this it follows from Equation~\eqref{eqn:Evs}, without using the above, that we have
$ \Evs_{C_\KS} \IC(\1_{C_r})=0.$
\end{remark}

%%%%%

We now begin the calculation showing that the left-hand side of Equation~\eqref{eqn:Evs-r} is $0$. As in Section~\ref{ssec:overview}, we define $\tilde{f}_{r} : \widetilde{C}_{r} \times C^*_\KS \to \A^1$ by $\tilde{f}_{r}(x,E,y) = (x\vert y)$, where  $( - \, \vert \, - ) : V \times V^* \to \A^1$ is the dot-product pairing.

\iffalse%%%% using 
\[
\Evs_{C_\KS} (\rho_r)_! \1_{\widetilde{C}_r}[\dim \widetilde{C}_r]
=
(\rho_{r}'')_! 
\left(
 \mathcal{M}_{(\sing{\tilde f}_{r}^{-1}(0))\cap \mathcal{\tilde O}_{r}}
\right)\vert_{\Lambda^\text{gen}_{C_\KS}}.
\]
\fi%%%%
By a direct computation, we find that the fibre $\rho_r^{-1}(x_{\KS})$ of $\rho_r$ above $x_\KS$ is:
\[
E_{\lambda_1}^2=\begin{bmatrix}
1 & 0\\
0 & 1\\
0 & 0\\
0 & 0
\end{bmatrix},\quad
E_{\lambda_2}^1=\begin{bmatrix}
0\\
0\\
a\\
b
\end{bmatrix}, \quad
E_{\lambda_2}^3=\begin{bmatrix}
1 & 0 & 0\\
0 & 1 & 0\\
0 & 0 & a\\
0 & 0 & b
\end{bmatrix}
,\quad
E_{\lambda_3}^2=\begin{bmatrix}
1 & 0\\
0 & 1\\
0 & 0 \\
0 & 0
\end{bmatrix};
\]
thus, $\rho_r^{-1}(x_{\KS}) \iso \mathbb{P}^1$; we use the global coordinate $[a:b]$ for $\rho_r^{-1}(x_{\KS})$.

Introduce local coordinates for $\widetilde{C}_r$ for an affine chart $U$ that contains the affine part $a\ne 0$ of $\rho_r^{-1}(x_{\KS})$ under the isomorphism $\rho_r^{-1}(x_{\KS}) \iso \{ [a:b] \}$ above:
\[
E_{\lambda_1}^2=\begin{bmatrix}
1 & 0\\
0 & 1\\
a_1 & a_2\\
a_3 & a_4
\end{bmatrix},\quad
E_{\lambda_2}^1=\begin{bmatrix}
b_1\\
b_2\\
1\\
b_3
\end{bmatrix}, \quad
E_{\lambda_2}^3=\begin{bmatrix}
1 & 0 & 0\\
0 & 1 & 0\\
0 & 0 & 1\\
c_1 & c_2 & c_3
\end{bmatrix},\quad
E_{\lambda_3}^2=\begin{bmatrix}
1 & 0\\
0 & 1\\
d_1 & d_2\\
d_3 & d_4
\end{bmatrix}.
\]
Form the Jacobian for the equations that describe $U\times C_\KS^*$ and $\tilde{f}_r^{-1}(0)$. The generic rank of this Jacobian matrix is
\[
%(\text{number of variables}) - \dim(\widetilde{C}_r\times C_\KS^\ast\cap \tilde{f}_r^{-1}(0)) = 110 - (35+32-1)=44.
(\text{number of variables}) - \dim\tilde{f}_r^{-1}(0) = 110 - (35+32-1)=44.
\] 
We evaluate the  Jacobian at the generic point $(x_\KS,y_\KS(t_1,t_2))$, with $\diag(t_1,t_2)\in T'_\text{reg}$, and further add the conditions that describe $U\cap\rho_r^{-1}(x_{\KS})$. Now the Jacobian becomes a matrix $M_r$ over $\mathbb{Q}[t_1,t_2,c_3]$. Performing elementary row and column operations on $M_r$ results in a $45\times 45$ block diagonal matrix whose blocks consist of a $43\times 43$ identity matrix and the matrix 
\[
B_r\ceq\begin{bmatrix}
t_1c_3 + c_3^2 & -t_1-c_3\\
t_2c_3^2 & -t_2c_3
\end{bmatrix}.
\]
%\todo{Optional: strictly speaking you missed a column operation, the first column is $-c_3$ times second so $C1+c_3C2$ is an invertible column operation. We probably don't want to do anything about this, either here or where it happens in other cases as you are reporting what M2 did and actually coding this would have been a pain.}
The rank of $M_r$ will drop from $44$ to $43$ if and only if $B_r=0$. If $B_r=0$ then $-t_1-c_3=0$ and $-t_2c_3=0$, so $t_1=-c_3$ and hence $t_1t_2=-t_2c_3=0$; 
but $t_1t_2\neq 0$ since $\diag(t_1,t_2)\in T'_\text{reg}$, so $B_r\neq 0$ and the rank of $M_r$ is $44$. This shows that 
\[
(\sing{\tilde f}_r^{-1}(0))\cap (U\times C_{\KS}^*)  \cap(\rho_r')^{-1}(x_\KS,y_\KS(t_1,t_2))=\emptyset.
\]

Since the fibre of $\rho_r$ above $x_\KS$ is isomorphic to $\mathbb{P}^1$, we must also check for singularities in an affine chart for $\widetilde{C}_r$ that contains the affine part $b\neq 0$ of $\rho_r^{-1}(x_{\KS})$. Smoothness is verified using the same approach; Appendix~\ref{Assec: Cr} shows how this calculation is made.

We conclude that $\tilde{f}_r^{-1}(0)$ is smooth on $(\rho')^{-1}(x_\KS,y_\KS(t_1t_2))$, and so, letting $H$ act we get
$
\text{sing}(\tilde{f}_r^{-1}(0))\cap \mathcal{\tilde O}_r = \emptyset.
$
It follows that $\Evs_{C_\KS} {\rho_r}_! \1_{\widetilde{C}_r}=0$.
As explained above, Equation~\eqref{r} follows immediately.

\subsection{$\Evs_{C_\KS}\IC(\1_{C_m})$}\label{ssec:case-m}

We now provide the proof of Equation~\eqref{m}; see Appendix~\ref{Assec: Cm} for a detailed description of the code used for the computations in this section.
\iffalse
\[
\begin{array}{c}
\underline{m}_m \\
\{\{-2,-1\},\{-2,-1,0\},2\{-1,0,1\},\{0,1,2\},\{1,2\} \} \\
\\
C_m \\ 
\begin{tikzpicture}
\node(A) at (0,0) {$2$};
\node(B) at (1,0) {$4$};
\node(C) at (2,0) {$4$};
\node(L) at (3,0) {$4$};
\node(M) at (4,0) {$2$};
%line under first row
\draw (0,-0.25) -- (4,-0.25);
\node(D) at (0.5,-0.5) {$2$};
\node(E) at (1.5,-0.5) {$3$};
\node(d) at (2.5,-0.5) {$3$};
\node(e) at (3.5,-0.5) {$2$};
\node(F) at (1,-1) {$1$};
\node(G) at (2,-1) {$2$};
\node(H) at (3,-1) {$1$};
\node(I) at (1.5,-1.5) {$0$};
\node(J) at (2.5,-1.5) {$0$};
\node(K) at (2,-2) {$0$};
\end{tikzpicture}
\end{array}
\]
\fi%%%%
%Recall the rank triangle for $C_m$ from Table~\ref{table:Kittsix}. 

As in Section~\ref{ssec:case-r}, we use the rank triangle for $C_m$ to construct a proper cover $\rho_m:\widetilde{C}_m\rightarrow\overline{C}_m$. %which is semi-small, and realizes $\widetilde{C}_m$ as a smooth variety. 
The diagram appearing in Table~\ref{table:Cm} defines a subvariety of $\overline{C}_m\times\prod_{i=1}^4\mathcal{F}(E_{\lambda_i})$ by the relations:
\begin{align*}
&x_1(E_{\lambda_0})\subseteq E_{\lambda_1}^2 & &x_2(E_{\lambda_1})\subseteq E_{\lambda_2}^3 & &x_3(E_{\lambda_2})\subseteq E_{\lambda_3}^3\\
&x_2(E_{\lambda_1}^2)\subseteq E_{\lambda_2}^1 & &x_3(E_{\lambda_2}^3)\subseteq E_{\lambda_3}^2 & &x_4(E_{\lambda_3}^3)\subseteq E_{\lambda_4}^1\\
&x_3(E_{\lambda_2}^1)=0 & &x_4(E_{\lambda_3}^2)=0.
\end{align*}
Let $\widetilde{C}_m$ be the variety produced by removing the subspaces not appearing in the above relations; in this case, by removing $E_{\lambda_0}^1$, $E_{\lambda_1}^1$, $E_{\lambda_1}^3$,$E_{\lambda_2}^2$ and $E_{\lambda_3}^1$. 
%Let $\widetilde{C}_m$ be the subvariety in the product of $\overline{C}_m$ and this partial flag variety defined by the equations above and summarized in Table~\ref{table:Cm}.
Let $\rho_m : \widetilde{C}_m \to \overline{C}_m$ be the proper cover defined by projection $\rho_m(x,E) = x$ where $E$ is the partial flag $(E_{\lambda_0}, E_{\lambda_1}^2, E_{\lambda_1}, E_{\lambda_2}^1, E_{\lambda_2}^3, E_{\lambda_2}, E_{\lambda_3}^2, E_{\lambda_3}^3, E_{\lambda_4}^1)$.

\begin{table}[tp]
\caption{Equations defining $\widetilde{C}_m$.}
\label{table:Cm}
\begin{center}
\begin{tikzpicture}
% nodes
\node (A) at (0, 0)    {$0$};
\node (B) at (0.5,0)   {$\subset$};
\node (C) at (1,0)     {$E_{\lambda_0}^1$};
\node (a) at (1.5,0)   {$\subset$};
\node (b) at (2,0)     {$E_{\lambda_0}^2$};
\node (D) at (2.5,0)   {$=$};
\node (E) at (3,0)     {$E_{\lambda_0}$};
\node (F) at (0, -1.25)   {0};
\node (G) at (0.5,-1.25)  {$\subset$};
\node (H) at (1,-1.25)    {$E_{\lambda_1}^1$};
\node (I) at (1.5,-1.25)  {$\subset$};
\node (J) at (2,-1.25)    {$E_{\lambda_1}^2$};
\node (K) at (2.5,-1.25)  {$\subset$};
\node (L) at (3,-1.25)    {$E_{\lambda_1}^3$};
\node (M) at (3.5,-1.25)  {$\subset$};
\node (N) at (4,-1.25)    {$E_{\lambda_1}^4$};
\node (u) at (4.5,-1.25)  {$=$};
\node (uu)at (5,-1.25)    {$E_{\lambda_1}$};
\node (O) at (0, -2.5)   {0};
\node (P) at (0.5,-2.5)  {$\subset$};
\node (Q) at (1,-2.5)    {$E_{\lambda_2}^1$};
\node (R) at (1.5,-2.5)  {$\subset$};
\node (S) at (2,-2.5)    {$E_{\lambda_2}^2$};
\node (T) at (2.5,-2.5)  {$\subset$};
\node (U) at (3,-2.5)    {$E_{\lambda_2}^3$};
\node (V) at (3.5,-2.5)  {$\subset$};
\node (W) at (4,-2.5)    {$E_{\lambda_2}^4$};
\node (ww)at (4.5,-2.5)  {$=$};
\node (xx)at (5,-2.5)    {$E_{\lambda_2}$};
\node (X) at (0, -3.75)   {0};
\node (Y) at (0.5,-3.75)  {$\subset$};
\node (Z) at (1,-3.75)    {$E_{\lambda_3}^1$};
\node (AA) at (1.5,-3.75)  {$\subset$};
\node (BB) at (2,-3.75)    {$E_{\lambda_3}^2$};
\node (CC) at (2.5,-3.75)  {$\subset$};
\node (DD) at (3,-3.75)    {$E_{\lambda_3}^3$};
\node (EE) at (3.5,-3.75)  {$\subset$};
\node (FF) at (4,-3.75)    {$E_{\lambda_3}^4$};
\node (ff)at  (4.5,-3.75)  {$=$};
\node (gg)at  (5,-3.75)    {$E_{\lambda_3}$};
\node (g) at (0, -5)    {$0$};
\node (h) at (0.5,-5)   {$\subset$};
\node (i) at (1,-5)     {$E_{\lambda_4}^1$};
\node (j) at (1.5,-5)   {$\subset$};
\node (k) at (2,-5)     {$E_{\lambda_4}^2$};
\node (l) at (2.5,-5)   {$=$};
\node (m) at (3,-5)     {$E_{\lambda_4}$};

% arrows
\draw[->] (b) to node [right] {} (J);
\draw[->] (J) to node [right] {} (Q);
\draw[->] (Q) to node [right] {} (X);
\draw[->] (N) to node [right] {} (U);
\draw[->] (BB) to node [right] {} (g);
\draw[->] (U) to node [right] {} (BB);
\draw[->] (DD) to node [right] {} (i);
%\draw[->] (FF) to node [right] {} (k);
\draw[->] (W) to node [right] {} (DD);
\end{tikzpicture}
\end{center}
\end{table}

\begin{remark}
As in the previous case, by construction, $\rho_m$ is an isomorpism over $C_m$. We may again verify the map is semi-small, see Appendix~\ref{Assec: Cm}.
So again we have from the decomposition theorem that
\[ {\rho_m}_! \1_{\widetilde{C}_m}[\dim \widetilde{C}_m] 
= \IC(\1_{C_m}) \oplus \mathop{\bigoplus}\limits_{C<C_m} m(C,C_m) \IC(\1_{C}) . \]
 The $C<C_m$ for which $m(C,C_m)\neq 0$ are:

\begin{center}
\begin{tikzpicture}
\node(A) at (0,0) {$2$};
\node(B) at (\rtwidth,0) {$4$};
\node(C) at (2*\rtwidth,0) {$4$};
\node(L) at (3*\rtwidth,0) {$4$};
\node(M) at (4*\rtwidth,0) {$2$};
%line under first row
\draw (0,-0.5*\rtheight) -- (4*\rtwidth,-0.5*\rtheight);
\node(D) at (0.5*\rtwidth,-\rtheight) {$1$};
\node(E) at (1.5*\rtwidth,-\rtheight) {$2$};
\node(d) at (2.5*\rtwidth,-\rtheight) {$3$};
\node(e) at (3.5*\rtwidth,-\rtheight) {$2$};
\node(F) at (\rtwidth,-2*\rtheight) {$0$};
\node(G) at (2*\rtwidth,-2*\rtheight) {$1$};
\node(H) at (3*\rtwidth,-2*\rtheight) {$1$};
\node(I) at (1.5*\rtwidth,-3*\rtheight) {$0$};
\node(J) at (2.5*\rtwidth,-3*\rtheight) {$0$};
\node(K) at (2*\rtwidth,-4*\rtheight) {$0$};
\end{tikzpicture}
\quad
\begin{tikzpicture}
\node(A) at (0,0) {$2$};
\node(B) at (\rtwidth,0) {$4$};
\node(C) at (2*\rtwidth,0) {$4$};
\node(L) at (3*\rtwidth,0) {$4$};
\node(M) at (4*\rtwidth,0) {$2$};
%line under first row
\draw (0,-0.5*\rtheight) -- (4*\rtwidth,-0.5*\rtheight);
\node(D) at (0.5*\rtwidth,-\rtheight) {$1$};
\node(E) at (1.5*\rtwidth,-\rtheight) {$3$};
\node(d) at (2.5*\rtwidth,-\rtheight) {$2$};
\node(e) at (3.5*\rtwidth,-\rtheight) {$1$};
\node(F) at (\rtwidth,-2*\rtheight) {$0$};
\node(G) at (2*\rtwidth,-2*\rtheight) {$1$};
\node(H) at (3*\rtwidth,-2*\rtheight) {$1$};
\node(I) at (1.5*\rtwidth,-3*\rtheight) {$0$};
\node(J) at (2.5*\rtwidth,-3*\rtheight) {$0$};
\node(K) at (2*\rtwidth,-4*\rtheight) {$0$};
\end{tikzpicture}
\quad
\begin{tikzpicture}
\node(A) at (0,0) {$2$};
\node(B) at (\rtwidth,0) {$4$};
\node(C) at (2*\rtwidth,0) {$4$};
\node(L) at (3*\rtwidth,0) {$4$};
\node(M) at (4*\rtwidth,0) {$2$};
%line under first row
\draw (0,-0.5*\rtheight) -- (4*\rtwidth,-0.5*\rtheight);
\node(D) at (0.5*\rtwidth,-\rtheight) {$1$};
\node(E) at (1.5*\rtwidth,-\rtheight) {$3$};
\node(d) at (2.5*\rtwidth,-\rtheight) {$2$};
\node(e) at (3.5*\rtwidth,-\rtheight) {$2$};
\node(F) at (\rtwidth,-2*\rtheight) {$0$};
\node(G) at (2*\rtwidth,-2*\rtheight) {$1$};
\node(H) at (3*\rtwidth,-2*\rtheight) {$1$};
\node(I) at (1.5*\rtwidth,-3*\rtheight) {$0$};
\node(J) at (2.5*\rtwidth,-3*\rtheight) {$0$};
\node(K) at (2*\rtwidth,-4*\rtheight) {$0$};
\end{tikzpicture}
\quad
\begin{tikzpicture}
\node(A) at (0,0) {$2$};
\node(B) at (\rtwidth,0) {$4$};
\node(C) at (2*\rtwidth,0) {$4$};
\node(L) at (3*\rtwidth,0) {$4$};
\node(M) at (4*\rtwidth,0) {$2$};
%line under first row
\draw (0,-0.5*\rtheight) -- (4*\rtwidth,-0.5*\rtheight);
\node(D) at (0.5*\rtwidth,-\rtheight) {$1$};
\node(E) at (1.5*\rtwidth,-\rtheight) {$3$};
\node(d) at (2.5*\rtwidth,-\rtheight) {$3$};
\node(e) at (3.5*\rtwidth,-\rtheight) {$2$};
\node(F) at (\rtwidth,-2*\rtheight) {$0$};
\node(G) at (2*\rtwidth,-2*\rtheight) {$2$};
\node(H) at (3*\rtwidth,-2*\rtheight) {$1$};
\node(I) at (1.5*\rtwidth,-3*\rtheight) {$0$};
\node(J) at (2.5*\rtwidth,-3*\rtheight) {$0$};
\node(K) at (2*\rtwidth,-4*\rtheight) {$0$};
\end{tikzpicture}
\end{center}

\begin{center}
\begin{tikzpicture}
\node(A) at (0,0) {$2$};
\node(B) at (\rtwidth,0) {$4$};
\node(C) at (2*\rtwidth,0) {$4$};
\node(L) at (3*\rtwidth,0) {$4$};
\node(M) at (4*\rtwidth,0) {$2$};
%line under first row
\draw (0,-0.5*\rtheight) -- (4*\rtwidth,-0.5*\rtheight);
\node(D) at (0.5*\rtwidth,-\rtheight) {$2$};
\node(E) at (1.5*\rtwidth,-\rtheight) {$2$};
\node(d) at (2.5*\rtwidth,-\rtheight) {$2$};
\node(e) at (3.5*\rtwidth,-\rtheight) {$1$};
\node(F) at (\rtwidth,-2*\rtheight) {$0$};
\node(G) at (2*\rtwidth,-2*\rtheight) {$1$};
\node(H) at (3*\rtwidth,-2*\rtheight) {$1$};
\node(I) at (1.5*\rtwidth,-3*\rtheight) {$0$};
\node(J) at (2.5*\rtwidth,-3*\rtheight) {$0$};
\node(K) at (2*\rtwidth,-4*\rtheight) {$0$};
\end{tikzpicture}
\quad
\begin{tikzpicture}
\node(A) at (0,0) {$2$};
\node(B) at (\rtwidth,0) {$4$};
\node(C) at (2*\rtwidth,0) {$4$};
\node(L) at (3*\rtwidth,0) {$4$};
\node(M) at (4*\rtwidth,0) {$2$};
%line under first row
\draw (0,-0.5*\rtheight) -- (4*\rtwidth,-0.5*\rtheight);
\node(D) at (0.5*\rtwidth,-\rtheight) {$2$};
\node(E) at (1.5*\rtwidth,-\rtheight) {$2$};
\node(d) at (2.5*\rtwidth,-\rtheight) {$2$};
\node(e) at (3.5*\rtwidth,-\rtheight) {$2$};
\node(F) at (\rtwidth,-2*\rtheight) {$0$};
\node(G) at (2*\rtwidth,-2*\rtheight) {$1$};
\node(H) at (3*\rtwidth,-2*\rtheight) {$1$};
\node(I) at (1.5*\rtwidth,-3*\rtheight) {$0$};
\node(J) at (2.5*\rtwidth,-3*\rtheight) {$0$};
\node(K) at (2*\rtwidth,-4*\rtheight) {$0$};
\end{tikzpicture}
\quad
\begin{tikzpicture}
\node(A) at (0,0) {$2$};
\node(B) at (\rtwidth,0) {$4$};
\node(C) at (2*\rtwidth,0) {$4$};
\node(L) at (3*\rtwidth,0) {$4$};
\node(M) at (4*\rtwidth,0) {$2$};
%line under first row
\draw (0,-0.5*\rtheight) -- (4*\rtwidth,-0.5*\rtheight);
\node(D) at (0.5*\rtwidth,-\rtheight) {$2$};
\node(E) at (1.5*\rtwidth,-\rtheight) {$2$};
\node(d) at (2.5*\rtwidth,-\rtheight) {$3$};
\node(e) at (3.5*\rtwidth,-\rtheight) {$2$};
\node(F) at (\rtwidth,-2*\rtheight) {$0$};
\node(G) at (2*\rtwidth,-2*\rtheight) {$2$};
\node(H) at (3*\rtwidth,-2*\rtheight) {$1$};
\node(I) at (1.5*\rtwidth,-3*\rtheight) {$0$};
\node(J) at (2.5*\rtwidth,-3*\rtheight) {$0$};
\node(K) at (2*\rtwidth,-4*\rtheight) {$0$};
\end{tikzpicture}
\end{center}
Again we know by  Lemma~\ref{lem:Kittsix} that for these $C$ that $\Evs_{C_\KS}\IC(\1_{C}) = 0$ hence we can conclude that  Equation~\eqref{eqn:Evs} ultimately takes the form
\begin{equation}\label{eqn:Evs-m}
 \Evs_{C_\KS} {\rho_m}_! \1_{\widetilde{C}_m}[\dim \widetilde{C}_m]  =  \Evs_{C_\KS} \IC(\1_{C_m}) .
\end{equation}
Below, we calculate the left hand side and find that $\Evs_{C_\KS} {\rho_m}_! \1_{\widetilde{C}_m}[\dim \widetilde{C}_m] =0$ and again conclude that we have
$ \Evs_{C_\KS} \IC(\1_{C_m})=0.$
\end{remark}
%%%%

%%%%%

By a direct computation, we find that the fibre $\rho_m^{-1}(x_{\KS})$ of $\rho_m$ above $x_\KS$ is:
\[
\begin{array}{lll}
E_{\lambda_1}^2 = \begin{bmatrix}
1 & 0\\
0 & 1\\
0 & 0\\
0 & 0
\end{bmatrix},\quad&
%\]
%\[
E_{\lambda_2}^1=\begin{bmatrix}
0\\
0\\
a\\
b
\end{bmatrix} ,\quad&
E_{\lambda_2}^3=\begin{bmatrix}
1 & 0 & 0\\
0 & 1 & 0\\
0 & 0 & a\\
0 & 0 & b
\end{bmatrix},
\\\\[-3pt]
E_{\lambda_3}^2=\begin{bmatrix}
1 & 0\\
0 & 1\\
0 & 0\\
0 & 0
\end{bmatrix}, \quad&
 E_{\lambda_3}^3=\begin{bmatrix}
1 & 0 & 0\\
0 & 1 & 0\\
0 & 0 & c\\
0 & 0 & d
\end{bmatrix},\quad&
%\]
%\[
E_{\lambda_4}^1=\begin{bmatrix}
c\\
d
\end{bmatrix};
\end{array}
\]
thus, $\rho_m^{-1}(x_{\KS})\cong\mathbb{P}^1\times\mathbb{P}^1$.
%where $[a : b]\cong\mathbb{P}^1$ and $[c:d]\cong\mathbb{P}^1$. 
%
Introduce local coordinates for $\widetilde{C}_R$ for an affine chart $U$ that contains the affine part $a\ne 0$ and $c\neq 0$ of $\rho_m^{-1}(x_{\KS})$ under the isomorphism $\rho_m^{-1}(x_{\KS}) \iso \{ [a:b] \}\times \{[c:d]\}$ above:
\[\begin{array}{lll}
E_{\lambda_1}^2=\begin{bmatrix}
1 & 0\\
0 & 1\\
a_1 & a_2\\
a_3 & a_4
\end{bmatrix},\quad&
%\]
%\[
E_{\lambda_2}^1=\begin{bmatrix}
b_1\\
b_2\\
1\\
b_3
\end{bmatrix}, \quad&
E_{\lambda_2}^3=\begin{bmatrix}
1 & 0 & 0\\
0 & 1 & 0\\
0 & 0 & 1\\
c_1 & c_2 & c_3
\end{bmatrix},
\\\\[-3pt]
E_{\lambda_3}^2=\begin{bmatrix}
1 & 0\\
0 & 1\\
d_1 & d_2\\
d_3 & d_4
\end{bmatrix}, \quad&
E_{\lambda_3}^3=\begin{bmatrix}
1 & 0 & 0\\
0 & 1 & 0\\
0 & 0 & 1\\
e_1 & e_2 & e_3
\end{bmatrix},\quad&
%\]
%\[
E_{\lambda_4}^1=\begin{bmatrix}
1\\
g
\end{bmatrix}.\end{array}
\]
Next, we form the Jacobian for the equations that describe $U\times C_\KS^*$ and $\tilde{f}_m^{-1}(0)$. The generic rank of this Jacobian matrix is
\[
%(\text{number of variables}) - \dim(\widetilde{C}_m\times C_\KS^\ast\cap \tilde{f}_m^{-1}(0)) = 114 - (37+32-1)=46.
(\text{number of variables}) - \dim\tilde{f}_m^{-1}(0) = 114 - (37+32-1)=46.
\] 
We evaluate the  Jacobian at the generic point $(x_\KS,y_\KS(t_1,t_2))$, with $\diag(t_1,t_2)\in T'_\text{reg}$, and further add the conditions that describe $U\cap\rho_m^{-1}(x_{\KS})$. 
We are left with a matrix $M_m$ over $\mathbb{Q}[t_1,t_2,c_3,g]$. Performing elementary row and column operations on $M_m$ results in a $48\times 47$ block diagonal matrix whose blocks consists of a $45\times 45$ identity matrix and the matrix

\[
B_m\ceq\begin{bmatrix}
c_3-g & -c_3+g\\
-t_1-c_3 & t_1+c_3\\
-t_2c_3 & t_2c_3
\end{bmatrix}.
\]

\noindent The rank of $M_m$ will drop from $46$ to $45$ if and only if $B_m=0$. If $B_m=0$ we get the relations $ t_1+c_3=0$ and $t_2c_3=0$, so $t_1=-c_3$ and hence $t_1t_2=-t_2c_3=0$; but, $t_1t_2\neq 0$ since $\diag(t_1,t_2)\in T'_\text{reg}$, so $B_m\neq 0$ and the rank of $M_m$ is $46$. This shows that 
\[
(\sing{\tilde f}_m^{-1}(0))\cap (U\times C_{\KS}^*)  \cap(\rho_m')^{-1}(x_\KS,y_\KS(t_1,t_2))=\emptyset.
\]

Since $\rho_m^{-1}(x_\KS)\iso \mathbb{P}^1\times\mathbb{P}^1$ we must also check for singularities in a collection of affine charts for $\widetilde{C}_m$ that, together with $U$, cover $\widetilde{C}_m\cap\rho_m^{-1}(x_\KS)$. Smoothness is verified using the same method above, but with a different affine choice for the $E_{\lambda_i}^k$ in $\widetilde{C}_m$. There are $3$ such affine choices that need to be made; Appendix~\ref{Assec: Cm} shows how these extra calculations were made.
We conclude that $\tilde{f}_m^{-1}(0)$ is smooth on $\mathcal{\tilde O}_m$, that is,
$
\text{sing}(\tilde{f}_m^{-1}(0))\cap\mathcal{\tilde O}_m=\emptyset.
$
From this, Equation~\eqref{m} follows immediately:
$
\Evs_{C_\KS}\IC(\1_{C_m})=0.
$

\subsection{$\Evs_{C_\KS}\IC(\1_{C_R})$}\label{ssec:case-R}

In this section we prove Equation~\eqref{R}; see Appendix~\ref{Assec: CR} for a detailed description of the code used for the computations in this section. 

Recall the rank triangle for $C_R$ from Table~\ref{table:Kittsix}.
\iffalse%%%%
\[
\begin{array}{c}
\underline{m}_R\\
\{ 2\{-2,-1\},2\{-1,0,1\},2\{0,1,2\}\} \\
\\ 
C_R\\ 
\begin{tikzpicture}
\node(A) at (0,0) {$2$};
\node(B) at (1,0) {$4$};
\node(C) at (2,0) {$4$};
\node(L) at (3,0) {$4$};
\node(M) at (4,0) {$2$};
%line under first row
\draw (0,-0.25) -- (4,-0.25);
\node(D) at (0.5,-0.5) {$2$}; 
\node(E) at (1.5,-0.5) {$2$};
\node(d) at (2.5,-0.5) {$4$};
\node(e) at (3.5,-0.5) {$2$};
\node(F) at (1,-1) {$0$};
\node(G) at (2,-1) {$2$};
\node(H) at (3,-1) {$2$};
\node(I) at (1.5,-1.5) {$0$};
\node(J) at (2.5,-1.5) {$0$};
\node(K) at (2,-2) {$0$};
\end{tikzpicture}
\end{array}
\]
\fi%%%%
\begin{table}[tp]
\caption{Equations defining $\widetilde{C}_R$.}
\label{table:CR}
\begin{center}
\begin{tikzpicture}
% nodes
\node (A) at (0, 0)    {$0$};
\node (B) at (0.5,0)   {$\subset$};
\node (C) at (1,0)     {$E_{\lambda_0}^1$};
\node (a) at (1.5,0)   {$\subset$};
\node (b) at (2,0)     {$E_{\lambda_0}^2$};
\node (D) at (2.5,0)   {$=$};
\node (E) at (3,0)     {$E_{\lambda_0}$};
\node (F) at (0, -1.25)   {0};
\node (G) at (0.5,-1.25)  {$\subset$};
\node (H) at (1,-1.25)    {$E_{\lambda_1}^1$};
\node (I) at (1.5,-1.25)  {$\subset$};
\node (J) at (2,-1.25)    {$E_{\lambda_1}^2$};
\node (K) at (2.5,-1.25)  {$\subset$};
\node (L) at (3,-1.25)    {$E_{\lambda_1}^3$};
\node (M) at (3.5,-1.25)  {$\subset$};
\node (N) at (4,-1.25)    {$E_{\lambda_1}^4$};
\node (u) at (4.5,-1.25)  {$=$};
\node (uu)at (5,-1.25)    {$E_{\lambda_1}$};
\node (O) at (0, -2.5)   {0};
\node (P) at (0.5,-2.5)  {$\subset$};
\node (Q) at (1,-2.5)    {$E_{\lambda_2}^1$};
\node (R) at (1.5,-2.5)  {$\subset$};
\node (S) at (2,-2.5)    {$E_{\lambda_2}^2$};
\node (T) at (2.5,-2.5)  {$\subset$};
\node (U) at (3,-2.5)    {$E_{\lambda_2}^3$};
\node (V) at (3.5,-2.5)  {$\subset$};
\node (W) at (4,-2.5)    {$E_{\lambda_2}^4$};
\node (ww)at (4.5,-2.5)  {$=$};
\node (xx)at (5,-2.5)    {$E_{\lambda_2}$};
\node (X) at (0, -3.75)   {0};
\node (Y) at (0.5,-3.75)  {$\subset$};
\node (Z) at (1,-3.75)    {$E_{\lambda_3}^1$};
\node (AA) at (1.5,-3.75)  {$\subset$};
\node (BB) at (2,-3.75)    {$E_{\lambda_3}^2$};
\node (CC) at (2.5,-3.75)  {$\subset$};
\node (DD) at (3,-3.75)    {$E_{\lambda_3}^3$};
\node (EE) at (3.5,-3.75)  {$\subset$};
\node (FF) at (4,-3.75)    {$E_{\lambda_3}^4$};
\node (ff)at  (4.5,-3.75)  {$=$};
\node (gg)at  (5,-3.75)    {$E_{\lambda_3}$};
\node (g) at (0, -5)    {$0$};
\node (h) at (0.5,-5)   {$\subset$};
\node (i) at (1,-5)     {$E_{\lambda_4}^1$};
\node (j) at (1.5,-5)   {$\subset$};
\node (k) at (2,-5)     {$E_{\lambda_4}^2$};
\node (l) at (2.5,-5)   {$=$};
\node (m) at (3,-5)     {$E_{\lambda_4}$};

% arrows
\draw[->] (b) to node [right] {} (J);
\draw[->] (J) to node [right] {} (S);
\draw[->] (S) to node [right] {} (X);
%\draw[->] (N) to node [right] {} (W);
\draw[->] (BB) to node [right] {} (g);
\draw[->] (W) to node [right] {} (BB);
%\draw[->] (FF) to node [right] {} (k);
\end{tikzpicture}
\end{center}
\end{table}
As in Section~\ref{ssec:case-r}, we use this rank triangle to construct a proper cover $\rho_R:\widetilde{C}_R\rightarrow\overline{C}_R$ which is semi-small, and realizes $\widetilde{C}_R$ as a smooth variety. The diagram appearing in Table~\ref{table:CR} defines a subvariety of $\overline{C}_R\times\prod_{i=0}^4\mathcal{F}(E_{\lambda_i})$ by the relations:
\begin{align*}
&x_1(E_{\lambda_0})\subseteq E_{\lambda_1}^2 & &x_3(E_{\lambda_2}) \subseteq E_{\lambda_3}^2\\
&x_2(E_{\lambda_1}^2)\subseteq E_{\lambda_2}^2 & &x_4(E_{\lambda_3}^2)=0\\
&x_3(E_{\lambda_2}^2)=0.
\end{align*}
We remove the subspaces not appearing in the relations above; in this case, remove the vector spaces $E_{\lambda_0}^1, E_{\lambda_1}^1, E_{\lambda_1}^3,E_{\lambda_2}^1,E_{\lambda_2}^3, E_{\lambda_3}^1, E_{\lambda_3}^3$ and $E_{\lambda_4}^1$. Let $\widetilde{C}_R$ be the subvariety in the product of $\overline{C}_R$ and this partial flag variety defined by the equations above and summarized in Table~\ref{table:CR}. For elements in $\widetilde{C}_R$ we use the notation $(x,E)$, where $x\in\overline{C}_R$ and $E$ is a point in the partial flag variety described above.

\begin{remark}
As in the previous case, by construction, $\rho_R$ is an isomorpism over $C_R$. In this case, the map is small and hence $m_i(C,C_R)= 0$ for all $C<C_R$; see Appendix~\ref{Assec: CR}.
\end{remark}
Once again, Equation~\eqref{eqn:Evs} takes the form
\begin{equation}\label{eqn:Evs-R}
 \Evs_{C_\KS} {\rho_R}_! \1_{\widetilde{C_R}}[\dim \widetilde{C_R}] 
= \Evs_{C_\KS} \IC(\1_{C_R}) 
\end{equation} 
%%%
By a direct computation, we find that the fibre $\rho_R^{-1}(x_{\KS})$ of $\rho_R$ above $x_\KS$ is:
\[
E_{\lambda_1}^2=\begin{bmatrix}
1 & 0\\
0 & 1\\
0 & 0\\
0 & 0
\end{bmatrix} ,
\quad
E_{\lambda_2}^2=\begin{bmatrix}
0 & 0\\
0 & 0\\
1 & 0\\
0 & 1
\end{bmatrix},
\quad
E_{\lambda_3}^2=
\begin{bmatrix}
1 & 0\\
0 & 1\\
0 & 0\\
0 & 0
\end{bmatrix}.
\]
And so, $\rho_R : \widetilde{C}_{R} \to \overline{C}_{R}$ is an isomorphism over $C_\KS$. 
Now choose local coordinates by introducing the following variables for an affine chart $U$ in $\widetilde{C}_R$ that contains $\rho_R^{-1}(x_{\KS})$, by specifying flags $E_{\lambda_i}^k$ as follows:
\[
E_{\lambda_1}^2=\begin{bmatrix}
1 & 0\\
0 & 1\\
a_1 & a_2\\
a_3 & a_4
\end{bmatrix},
\quad
E_{\lambda_2}^2=
\begin{bmatrix}
b_1 & b_2\\
b_3 & b_4\\
1 & 0\\
0 & 1
\end{bmatrix},
\quad
E_{\lambda_3}^2=
\begin{bmatrix}
1 & 0\\
0 & 1\\
c_1 & c_2\\
c_3 & c_4
\end{bmatrix}.
\]
We compute the Jacobian for the system of equations that describe $U \times C_\KS^*$ and $\tilde{f}_{R}^{-1}(0)$.  
The generic rank of this Jacobian is 
\[
%(\text{number of variables}) - \dim((\widetilde{C}_R\times C_\KS^\ast) \cap \tilde{f}_R^{-1}(0))=  108 - (36 + 32 -1) = 41.
(\text{number of variables}) - \dim\tilde{f}_R^{-1}(0) =  108 - (36 + 32 -1) = 41.
\]
We evaluate the  Jacobian at the generic point $(x_\KS,y_\KS(t_1,t_2))$, with $\diag(t_1,t_2)\in T'_\text{reg}$, and further add the conditions that correspond to $\rho_R(x_\KS,E) = x_{\KS}$.
We are left with a matrix $M_R$ over $\mathbb{Q}[t_1,t_2]$. Performing elementary row and column operations on $M_R$ results in a $41\times 41$ identity matrix. We conclude that $M_R$ has rank $41$.
In this case, it is not necessary to consider more than one affine $U\subset {\widetilde C_R}$, so
 $\tilde{f}_{R}^{-1}(0)$ is smooth on $(\rho_R')^{-1}(x_\KS,y_\KS(t_1,t_2))$, and so
$
(\sing\tilde{f}^{-1}_R(0))\cap \mathcal{\tilde O}_R = \emptyset.
$
From this, Equation~\eqref{R} follows immediately:
$
\Evs_{C_\KS}\IC(\1_{C_R})=0.
$

\subsection{$\Evs_{C_\KS}\IC(\1_{C_\psi})$}\label{ssec:case-psi}

In this section we prove Equation~\eqref{psi}; see Appendix~\ref{Assec: Cpsi} for a detailed description of the code used for the computations in this section.
 
Recall the rank triangle for $C_\psi$ from Table~\ref{table:Kittsix}. We use this rank triangle to define $\rho_\psi:\widetilde{C}_\psi\to\overline{C}_\psi$ as follows.
\iffalse%%%
\[
\begin{array}{c}
\underline{m}_\psi \\
\{ [-2,-1],[-2,1],[-1,0],[-1,2],[0,1],[1,2] \}\\
\\
C_\psi \\
\begin{tikzpicture}
\node(A) at (0,0) {$2$};
\node(B) at (1,0) {$4$};
\node(C) at (2,0) {$4$};
\node(L) at (3,0) {$4$};
\node(M) at (4,0) {$2$};
%line under first row
\draw (0,-0.25) -- (4,-0.25);
\node(D) at (0.5,-0.5) {$2$};
\node(E) at (1.5,-0.5) {$3$};
\node(d) at (2.5,-0.5) {$3$};
\node(e) at (3.5,-0.5) {$2$};
\node(F) at (1,-1) {$1$};
\node(G) at (2,-1) {$2$};
\node(H) at (3,-1) {$1$};
\node(I) at (1.5,-1.5) {$1$};
\node(J) at (2.5,-1.5) {$1$};
\node(K) at (2,-2) {$0$};
\end{tikzpicture}
\end{array}
\]
\fi%%%%

\begin{table}[tp]
\caption{Equations defining $\widetilde{C}_\psi$.}
\label{table:Cpsi}
\begin{center}
\begin{tikzpicture}
% nodes
\node (A) at (0, 0)    {$0$};
\node (B) at (0.5,0)   {$\subset$};
\node (C) at (1,0)     {$E_{\lambda_0}^1$};
\node (a) at (1.5,0)   {$\subset$};
\node (b) at (2,0)     {$E_{\lambda_0}^2$};
\node (D) at (2.5,0)   {$=$};
\node (E) at (3,0)     {$E_{\lambda_0}$};
\node (F) at (0, -1.25)   {0};
\node (G) at (0.5,-1.25)  {$\subset$};
\node (H) at (1,-1.25)    {$E_{\lambda_1}^1$};
\node (I) at (1.5,-1.25)  {$\subset$};
\node (J) at (2,-1.25)    {$E_{\lambda_1}^2$};
\node (K) at (2.5,-1.25)  {$\subset$};
\node (L) at (3,-1.25)    {$E_{\lambda_1}^3$};
\node (M) at (3.5,-1.25)  {$\subset$};
\node (N) at (4,-1.25)    {$E_{\lambda_1}^4$};
\node (u) at (4.5,-1.25)  {$=$};
\node (uu)at (5,-1.25)    {$E_{\lambda_1}$};
\node (O) at (0, -2.5)   {0};
\node (P) at (0.5,-2.5)  {$\subset$};
\node (Q) at (1,-2.5)    {$E_{\lambda_2}^1$};
\node (R) at (1.5,-2.5)  {$\subset$};
\node (S) at (2,-2.5)    {$E_{\lambda_2}^2$};
\node (T) at (2.5,-2.5)  {$\subset$};
\node (U) at (3,-2.5)    {$E_{\lambda_2}^3$};
\node (V) at (3.5,-2.5)  {$\subset$};
\node (W) at (4,-2.5)    {$E_{\lambda_2}^4$};
\node (ww)at (4.5,-2.5)  {$=$};
\node (xx)at (5,-2.5)    {$E_{\lambda_2}$};
\node (X) at (0, -3.75)   {0};
\node (Y) at (0.5,-3.75)  {$\subset$};
\node (Z) at (1,-3.75)    {$E_{\lambda_3}^1$};
\node (AA) at (1.5,-3.75)  {$\subset$};
\node (BB) at (2,-3.75)    {$E_{\lambda_3}^2$};
\node (CC) at (2.5,-3.75)  {$\subset$};
\node (DD) at (3,-3.75)    {$E_{\lambda_3}^3$};
\node (EE) at (3.5,-3.75)  {$\subset$};
\node (FF) at (4,-3.75)    {$E_{\lambda_3}^4$};
\node (ff)at  (4.5,-3.75)  {$=$};
\node (gg)at  (5,-3.75)    {$E_{\lambda_3}$};
\node (g) at (0, -5)    {$0$};
\node (h) at (0.5,-5)   {$\subset$};
\node (i) at (1,-5)     {$E_{\lambda_4}^1$};
\node (j) at (1.5,-5)   {$\subset$};
\node (k) at (2,-5)     {$E_{\lambda_4}^2$};
\node (l) at (2.5,-5)   {$=$};
\node (m) at (3,-5)     {$E_{\lambda_4}$};

% arrows
\draw[->] (b) to node [right] {} (J);
\draw[->] (J) to node [right] {} (Q);
\draw[->] (Q) to node [right] {} (Z);
\draw[->] (Z) to node [right] {} (g);
\draw[->] (N) to node [right] {} (U);
\draw[->] (U) to node [right] {} (BB);
\draw[->] (BB) to node [right] {} (i);
\draw[->] (W) to node [right] {} (DD);
\draw[->] (DD) to node [right] {} (i);
%\draw[->] (FF) to node [right] {} (k);
\end{tikzpicture}
\hspace{0.5cm}
\end{center}
\end{table}

\begin{enumerate}
\item
Looking at the second row of the rank triangle for $C_\psi$ we see the ranks $(2,3,3,2)$. This means, for any $x=(x_4,x_3,x_2,x_1)\in C_\psi$, we have $\rank  x_1 = 2$, $\rank x_2 = 3$, $\rank x_3 = 3$ and $\rank x_4 = 2$. 
\begin{enumerate}
\item Since $x_1 : E_{\lambda_0} \to E_{\lambda_1}$ and $\rank x_1=2$, we declare $x_1(E_{\lambda_0}) \subseteq E_{\lambda_1}^2$.
\item Since $x_2 : E_{\lambda_1} \to E_{\lambda_2}$ and $\rank x_2 = 3$, we declare $x_2(E_{\lambda_1}) \subseteq E_{\lambda_2}^3$.
\item Since $x_3 : E_{\lambda_2} \to E_{\lambda_3}$ and $\rank x_3 = 3$, we declare $x_3(E_{\lambda_2})  \subseteq E_{\lambda_3}^3$.
\item Since $x_4 : E_{\lambda_3} \to E_{\lambda_4}$ and $\rank x_4 = 2$, we declare $x_4(E_{\lambda_3})  \subseteq E_{\lambda_4}^2$; of course, this is no condition at all since $E_{\lambda_4}^2=E_{\lambda_4}$.
\end{enumerate}
\item
Looking at the third row of the rank triangle for $C_\psi$ we see the ranks $(1,2,1)$. This means $\rank  x_2 x_1 \leq 1$, $\rank x_3 x_2 \leq 2$ and $\rank x_4 x_3 \leq 1$.
\begin{enumerate}
\item Since $x_2 x_1 : E_{\lambda_0} \to E_{\lambda_2}$ and $\rank x_2 x_1 \leq 1$, we declare $x_2(E_{\lambda_1}^2)  \subseteq E_{\lambda_2}^1$. This is compatible with  $x_2 x_1(E_{\lambda_0})  \subseteq E_{\lambda_2}^1$ and $x_1(E_{\lambda_0})  \subseteq E_{\lambda_1}^2$.
\item Since $x_3 x_2 : E_{\lambda_1} \to E_{\lambda_3}$ and $\rank x_3 x_2 \leq 2$, we declare $x_3(E_{\lambda_2}^3) \subseteq E_{\lambda_3}^2$. This is compatible with $x_3 x_2(E_{\lambda_1})  \subseteq E_{\lambda_3}^2$ and $x_2(E_{\lambda_1})  \subseteq E_{\lambda_2}^3$.
\item Since $x_4 x_3 : E_{\lambda_2} \to E_{\lambda_4}$ and $\rank x_4 x_3 \leq 1$, we declare $x_4(E_{\lambda_3}^3)  \subseteq E_{\lambda_4}^1$. This is compatible with $x_4 x_3(E_{\lambda_2}) \subseteq E_{\lambda_4}^1$ and $x_3(E_{\lambda_2})  \subseteq E_{\lambda_3}^3$.
\end{enumerate}
\item
Looking at the fourth row of the rank triangle for $C_\psi$ we see the ranks $(1,1)$. This means $\rank  x_3 x_2 x_1 \leq 1$ and $\rank x_4 x_3 x_2 \leq 1$.
\begin{enumerate}
\item Since $x_3 x_2 x_1 : E_{\lambda_0} \to E_{\lambda_3}$ and $\rank x_3 x_2 x_1 \leq 1$, we declare $x_3(E_{\lambda_2}^1)  \subseteq E_{\lambda_3}^1$. This is compatible with $x_3 x_2 x_1(E_{\lambda_0})  \subseteq E_{\lambda_3}^1$ and $x_2 x_1(E_{\lambda_0})  \subseteq E_{\lambda_2}^1$.
\item Since $x_4 x_3 x_2 : E_{\lambda_1} \to E_{\lambda_4}$ and $\rank x_4 x_3 x_2  \leq 1$, we declare $x_4(E_{\lambda_3}^2)  \subseteq E_{\lambda_4}^1$. This is compatible with $x_4 x_3 x_2(E_{\lambda_1} )  \subseteq E_{\lambda_4}^1$ and $x_3 x_2(E_{\lambda_1}) \subseteq E_{\lambda_3}^2$.
\end{enumerate}
\item
Looking at the bottom entry in the rank triangle for $C_\psi$ we see the rank $(0)$. This means $\rank  x_4 x_3 x_2 x_1 \leq 0$ or equivalently, $x_4 x_3 x_2 x_1 = 0$. We declare $x_4(E_{\lambda_3}^1)=0$, which is compatible with $x_3x_2x_1(E_{\lambda_0})\subseteq E_{\lambda_3}^1$ and $x_4x_3x_2x_1=0$.
\end{enumerate}
Now consider the subvariety of $\overline{C}_\psi\times \prod_{i=0}^4 \mathcal{F}(E_{\lambda_i})$ defined by the relations:
\begin{align*}
&x_1(E_{\lambda_0}) \subseteq E_{\lambda_1}^2 & &x_2(E_{\lambda_1}) \subseteq E_{\lambda_2}^3 & &x_3(E_{\lambda_2})  \subseteq E_{\lambda_3}^3\\
&x_2(E_{\lambda_1}^2)  \subseteq E_{\lambda_2}^1 & &x_3(E_{\lambda_2}^3) \subseteq E_{\lambda_3}^2 & &x_4(E_{\lambda_3}^3)  \subseteq E_{\lambda_4}^1\\
&x_3(E_{\lambda_2}^1)  \subseteq E_{\lambda_3}^1 & &x_4(E_{\lambda_3}^2)  \subseteq E_{\lambda_4}^1 \\
&x_4(E_{\lambda_3}^1)=0.
\end{align*}
We remove the subspaces not appearing in the relations above; in this case, remove $E_{\lambda_0}^1$, $E_{\lambda_1}^1$, $E_{\lambda_1}^3$ and $E_{\lambda_2}^2$. Let $\widetilde{C}_\psi$ be the subvariety in the product of $\overline{C}_\psi$ and this partial flag variety defined by the equations above and summarized in Table~\ref{table:Cpsi}.
We define $\rho_\psi(x,E) = x$ where $E$ is a partial flag.

\iffalse%%%
The projection morphism $\rho_\psi:\widetilde{C}_\psi\rightarrow \overline{C}_\psi$ is proper and semi-small; the relevant orbits to the decomposition theorem \cite{deCM} are $C_\psi$ and $C_\KS$: 
\begin{equation}\label{eqn: decomp Cpsi}
(\rho_\psi)_\ast\1_{\widetilde{C}_\psi}[40]=n_\psi\IC(\1_{C_\psi})\oplus n_\KS\IC(\1_{C_\KS})
\end{equation}
for non-negative integers $n_\psi$ and $n_\KS$.
We found that $\rho_\psi:\widetilde{C}_\psi\rightarrow\overline{C}_\psi$ is an isomorphism above $C_\psi$ and so $n_\psi=1$.
% m_\KS = \dim H^{-32}(\rho_\psi^{-1}(x_\KS))[40],
\fi%%%
By a direct computation, we find that the fibre $\rho_\psi^{-1}(x_{\KS})$ of $\rho_\psi$ above $x_\KS$ is
\[
E_{\lambda_1}^2=\begin{bmatrix}
1 & 0\\
0 & 1\\
0 & 0\\
0 & 0
\end{bmatrix},\quad
%\]
%\[
E_{\lambda_2}^1=\begin{bmatrix}
u\\
v\\
w\\
z
\end{bmatrix}, \quad
E_{\lambda_2}^3=\begin{bmatrix}
1 & 0 & 0\\
0 & 1 & 0\\
0 & 0 & c\\
0 & 0 & d
\end{bmatrix},
\]
\[
E_{\lambda_3}^1=\begin{bmatrix}
a\\
b\\
0\\
0
\end{bmatrix}, \quad
E_{\lambda_3}^2=\begin{bmatrix}
1 & 0\\
0 & 1\\
0 & 0\\
0 & 0 
\end{bmatrix}, \quad
E_{\lambda_3}^3=\begin{bmatrix}
1 & 0 & 0\\
0 & 1 & 0\\
0 & 0 & e\\
0 & 0 & h
\end{bmatrix},\quad
%\]
%\[
E_{\lambda_4}^1=\begin{bmatrix}
e\\
h
\end{bmatrix},
\]
subject to the conditions $av-bu=0$ and $cz-dw=0$; this is the product of $\mathbb{P}^1\cong \{ [e:h] \tq (e,h)\ne (0,0)\} $ with the variety
\[
F\ceq \{([a:b], [c:d], [u:v:w:z]) \in \mathbb{P}^1\times \mathbb{P}^1\times \mathbb{P}^3 \tq av-bu=0,\ cz-dw=0 \},
\]
which is the blow up of  $\mathbb{P}^3$ along the two copies of $\mathbb{P}^1$ given by $(u,v)=(0,0)$ and $(w,z)=(0,0)$.
\iffalse%%%
\[
E_{\lambda_1}^2=\begin{bmatrix}
1 & 0\\
0 & 1\\
0 & 0\\
0 & 0
\end{bmatrix},\quad
%\]
%\[
E_{\lambda_2}^1=\begin{bmatrix}
\beta a\\
\beta b\\
\alpha c\\
\alpha d
\end{bmatrix}, \quad
E_{\lambda_2}^3=\begin{bmatrix}
1 & 0 & 0\\
0 & 1 & 0\\
0 & 0 & c\\
0 & 0 & d
\end{bmatrix},
\]
\[
E_{\lambda_3}^1=\begin{bmatrix}
a\\
b\\
0\\
0
\end{bmatrix}, \quad
E_{\lambda_3}^2=\begin{bmatrix}
1 & 0\\
0 & 1\\
0 & 0\\
0 & 0 
\end{bmatrix}, \quad
E_{\lambda_3}^3=\begin{bmatrix}
1 & 0 & 0\\
0 & 1 & 0\\
0 & 0 & e\\
0 & 0 & h
\end{bmatrix},\quad
%\]
%\[
E_{\lambda_4}^1=\begin{bmatrix}
e\\
h
\end{bmatrix},
\]
where $[a:b]\cong\mathbb{P}^1, [c:d]\cong\mathbb{P}^1, [e:h]\cong\mathbb{P}^1,$ and $[\beta:\alpha]\cong\mathbb{P}^1$. 
\fi%%%%%%
\iffalse%%%%
Taking the stalk of Equation~\eqref{eqn: decomp Cpsi} at $x_\KS$ we get
\begin{equation}\label{eqn:cohom}
\begin{array}{rl}
\left((\rho_\psi)_!\1_{\widetilde{C}_\psi}[40]\right)_{x_{\KS}} &= \IC(\1_{C_\psi})_{x_{\KS}}\oplus n_{\KS}\IC(\1_{C_{\KS}})_{x_{\KS}}\\
& = \IC(\1_{C_\psi})_{x_{\KS}}\oplus n_{\KS}\1[32].
\end{array}
\end{equation}
To find $n_\KS$ we now calculate $\dim H^{-32}(\rho_\psi^{-1}(x_\KS))[40] = \dim H^{8}(\rho_\psi^{-1}(x_\KS))$.
Note that $H^\bullet(\rho_\psi^{-1}(x)) = H^\bullet(F)\otimes H^\bullet(\mathbb{P}^1)$ so $H^8(\rho_\psi^{-1}(x)) = H^6(F)$.
Since the blow up $F$ of $\mathbb{P}^3$ has a unique big cell, $\dim H^6(F)  =1$. It follows that $ \dim H^{8}(\rho_\psi^{-1}(x_\KS))=1$. Therefore,
\[
n_\KS
 =\dim H^{-32}(\rho_\psi^{-1}(x_\KS))[40]\\
 =1.
\]
Since $n_\psi=n_\KS=1$, Equation~\eqref{eqn: decomp Cpsi} becomes
\[
(\rho_\psi)_!\1_{\widetilde{C}_\psi}[40]=\IC(\1_{C_\psi})\oplus \IC(\1_{C_{\KS}}),
\]
%
and therefore
\begin{equation}\label{eqn:EvCpsi}
\Evs_{C_{\KS}}\left((\rho_\psi)_!\1_{\widetilde{C}_\psi}[40]\right)
=\Evs_{C_{\KS}}\IC(\1_{C_\psi})\oplus\1_{\Lambda^\text{gen}_{C_\KS}}.
\end{equation}
\fi%%%%

\begin{remark}\label{rem:check15}
Again, by construction, $\rho_\psi$ is an isomorpism over $C_\psi$. 
For the 15 orbits with $C_\KS < C < C_\psi$ we must verify that the cover is small in the sense of decomposition theorem. This check is performed using code as documented in Appendix~\ref{Assec: Cpsi}.
A cohomology calculation on the fibre above then gives that $m_0(C_\KS,C_\psi)=1$ and $m_i(C_\KS,C_\psi) = 0$ for $i\neq 0$. 
By checking all other orbits we verify the map is again semi-small. We find that there are no other $C<C_\psi$ for which $m_i(C,C_\psi)\neq 0$.
\end{remark}
Using that $ \Evs_{C_\KS} \IC(\1_{C_\KS})  =   \1_{\Lambda^\text{gen}_{C_\KS}}$,  Equation~\eqref{eqn:Evs} takes the form
\begin{equation}\label{eqn:Evs-psi}
\Evs_{C_\KS} {\rho_\psi}_! \1_{\widetilde{C}_\psi}[\dim \widetilde{C}_\psi] 
= \Evs_{C_\KS} \IC(\1_{C_\psi}) \oplus \1_{\Lambda^\text{gen}_{C_\KS}}.
\end{equation}
We calculate the left-hand side of Equation~\eqref{eqn:Evs-psi}.
Introduce local coordinates for $\widetilde{C}_\psi$ for an affine chart $U$ that contains the affine part $a\ne 0, c\ne 0, e\ne 0,$ and $u\neq 0$ of $\rho_\psi^{-1}(x_{\KS})$ under the isomorphism $\rho_\psi^{-1}(x_{\KS}) \iso \mathbb{P}^1\times F$ above:
\[
E_{\lambda_1}^2=\begin{bmatrix}
1 & 0\\
0 & 1\\
a_1 & a_2\\
a_3 & a_4
\end{bmatrix},\quad
%\]
%\[
E_{\lambda_2}^1=\begin{bmatrix}
1\\
b_1\\
b_2\\
b_3
\end{bmatrix}, \quad
E_{\lambda_2}^3=\begin{bmatrix}
1 & 0 & 0\\
0 & 1 & 0\\
0 & 0 & 1\\
c_1 & c_2 & c_3
\end{bmatrix},
\]
\[
E_{\lambda_3}^1=\begin{bmatrix}
1\\
d_1\\
d_2\\
d_3
\end{bmatrix}, \quad
E_{\lambda_3}^2=\begin{bmatrix}
1 & 0\\
0 & 1\\
e_1 & e_2\\
e_3 & e_4
\end{bmatrix}, \quad
E_{\lambda_3}^3=\begin{bmatrix}
1 & 0 & 0\\
0 & 1 & 0\\
0 & 0 & 1\\
f_1 & f_2 & f_3
\end{bmatrix},\quad
%\]
%\[
E_{\lambda_4}^1=\begin{bmatrix}
1\\
g
\end{bmatrix}.
\] 
Next, we form the Jacobian for the equations that describe $U\times C_\KS^\ast$ and $\tilde{f}_\psi^{-1}(0)$. The generic rank of this Jacobian matrix is
\[
%(\text{number of variables}) - \dim((\widetilde{C}_\psi\times C_\KS^\ast)\cap \tilde{f}_\psi^{-1}(0)) = 117 - (40+32-1)=46.
(\text{number of variables}) - \dim\tilde{f}_\psi^{-1}(0) = 117 - (40+32-1)=46.
\] 
We evaluate the  Jacobian at the generic point $(x_\KS,y_\KS(t_1,t_2))$, with $\diag(t_1,t_2)\in T'_\text{reg}$, and further add the conditions that describe $U\cap\rho_\psi^{-1}(x_{\KS})$. 
We are left with a matrix $M_\psi$ over $\mathbb{Q}[t_1,t_2,b_1,b_2,c_3,f_3]$.
\iffalse
where these variables refer to the coordinates in $\rho_\psi^{-1}(0)$ below:
\[
E_{\lambda_1}^2=\begin{bmatrix}
1 & 0\\
0 & 1\\
0 & 0\\
0 & 0
\end{bmatrix},\quad
%\]
%\[
E_{\lambda_2}^1=\begin{bmatrix}
1\\
b_1\\
b_2\\
c_3
\end{bmatrix}, \quad
E_{\lambda_2}^3=\begin{bmatrix}
1 & 0 & 0\\
0 & 1 & 0\\
0 & 0 & 1\\
0 & 0 & c_3
\end{bmatrix},
\]
\[
E_{\lambda_3}^1=\begin{bmatrix}
1\\
b_1\\
0\\
0
\end{bmatrix}, \quad
E_{\lambda_3}^2=\begin{bmatrix}
1 & 0\\
0 & 1\\
0 & 0\\
0 & 0
\end{bmatrix}, \quad
E_{\lambda_3}^3=\begin{bmatrix}
1 & 0 & 0\\
0 & 1 & 0\\
0 & 0 & 1\\
0 & 0 & f_3
\end{bmatrix},\quad
%\]
%\[
E_{\lambda_4}^1=\begin{bmatrix}
1\\
f_3
\end{bmatrix}.
\] 
\fi
Performing row and column operations on $M_\psi$ results in a $52\times 51$ diagonal block matrix whose blocks consist of a $45\times 45$ identity matrix and a $7\times 6$ matrix $B_\psi$, displayed in the Section~\ref{Assec: Cpsi}.
\iffalse%%%
\[
B_\psi = \begin{bmatrix}
    \vdots & \vdots & \vdots & \vdots & \vdots &\vdots\\
    v   & b_1v & f_3v &  b_1f_3v & v & f_3v\\
    \vdots & \vdots & \vdots & \vdots & \vdots & \vdots
\end{bmatrix},\quad
%\]
\text{with}\quad
%\[
v=\begin{bmatrix}
-b_1+f_3\\
-b_1+f_3\\
-c_3 + f_3\\
t_1b_2 + b_2c_3 +1\\
t_2b_2c_3 + b_1\\
t_1b_2+b_2c_3 +1\\
t_2b_2c_3 + b_1
\end{bmatrix}.
\]
\fi%%%
The rank of $M_\psi$ will drop from its generic rank of $46$ to $45$ if and only if $B_\psi=0$. If $B_\psi=0$, then we get the system of equations:
\begin{gather*}
-b_1+f_3=0,\qquad
-c_3+f_3=0,\\
t_1b_2+b_2c_3+1=0,\qquad
t_2b_2c_3+b_1=0.
\end{gather*}
There are exactly two solutions to the above system: if $c_3=0$, then
\[
b_1=f_3=c_3=0, b_2=-t_1^{-1}
\]
is a solution; and if $c_3\neq 0$, then
\[
b_1=f_3=c_3=t_2-t_1, b_2=-t_2^{-1}
\]
is a solution. This calculation shows that, in $(U\times C_{\KS}^*)\cap(\rho_\psi')^{-1}(x_\KS,y_\KS(t_1,t_2))$, there are exactly two singularities of $\tilde{f}_\psi^{-1}(0)$: one of these singularities is 
\[
E_{\lambda_1}^2=\begin{bmatrix}
1 & 0\\
0 & 1\\
0 & 0\\
0 & 0
\end{bmatrix},\quad
%\]
%\[
E_{\lambda_2}^1=\begin{bmatrix}
t_1\\
0\\
-1\\
0
\end{bmatrix}, \quad
E_{\lambda_2}^3=\begin{bmatrix}
1 & 0 & 0\\
0 & 1 & 0\\
0 & 0 & 1\\
0 & 0 & 0
\end{bmatrix},
\]
\[
E_{\lambda_3}^1=\begin{bmatrix}
1\\
0\\
0\\
0
\end{bmatrix}, \quad
E_{\lambda_3}^2=\begin{bmatrix}
1 & 0\\
0 & 1\\
0 & 0\\
0 & 0 
\end{bmatrix}, \quad
E_{\lambda_3}^3=\begin{bmatrix}
1 & 0 & 0\\
0 & 1 & 0\\
0 & 0 & 1\\
0 & 0 & 0
\end{bmatrix},\quad
%\]
%\[
E_{\lambda_4}^1=\begin{bmatrix}
1\\
0
\end{bmatrix},
\]
while the other is
\[
E_{\lambda_1}^2=\begin{bmatrix}
1 & 0\\
0 & 1\\
0 & 0\\
0 & 0
\end{bmatrix},\quad
%\]
%\[
E_{\lambda_2}^1=\begin{bmatrix}
t_2\\
t_2(t_2-t_1)\\
-1\\
-(t_2-t_1)
\end{bmatrix}, \quad
E_{\lambda_2}^3=\begin{bmatrix}
1 & 0 & 0\\
0 & 1 & 0\\
0 & 0 & 1\\
0 & 0 & t_2-t_1
\end{bmatrix},
\]
\[
E_{\lambda_3}^1=\begin{bmatrix}
1\\
t_2-t_1\\
0\\
0
\end{bmatrix}, \quad
E_{\lambda_3}^2=\begin{bmatrix}
1 & 0\\
0 & 1\\
0 & 0\\
0 & 0 
\end{bmatrix}, \quad
E_{\lambda_3}^3=\begin{bmatrix}
1 & 0 & 0\\
0 & 1 & 0\\
0 & 0 & 1\\
0 & 0 & t_2-t_1
\end{bmatrix},\quad
%\]
%\[
E_{\lambda_4}^1=\begin{bmatrix}
1\\
t_2-t_1
\end{bmatrix}.
\]
Using the same approach, we check for singularities in a further 15 affine charts for $\widetilde{C}_\psi$ that cover $\widetilde{C}_\psi\cap\rho_\psi^{-1}(x_{\KS})$, corresponding to different choices for the subspaces $E_{\lambda_i}^k$ in $\widetilde{C}_\psi$. In all the other cases, no new singularities appear. 
We conclude that $\vert (\sing{\tilde f}_\psi^{-1}(0))\cap (\rho_\psi')^{-1}(x_\KS,y_\KS(t_1,t_2)) \vert = 2$ for every $\diag(t_1,t_2)\in T'_\text{reg}$.
Recall by  \cite{CFMMX}*{Proposition 7.20 and Lemma 7.21} that $(\sing{\tilde f}_\psi^{-1}(0)) \subset \overline{\mathcal{\tilde O}_\psi}$ so that by letting $H$ act on $ (\sing{\tilde f}_\psi^{-1}(0))\cap (\rho_\psi')^{-1}(x_\KS,y_\KS(t_1,t_2))$,
and checking dimensions it follows that %\todo{note changes here}
\[
\rho_\psi''' : (\sing{\tilde f}_\psi^{-1}(0))\cap \mathcal{\tilde O}_\psi \to \Lambda^\text{gen}_{C_\KS}
\]
is a double cover.

Having found the singular locus of $\tilde{f}_\psi^{-1}(0)$ in $\mathcal{\tilde O}_\psi$, we next compute the rank of the Hessian of $\tilde{f}_\psi$ at a point in the singular locus and verify the determinant of a maximal minor of the Hessian is a square.  
This first calculation verifies the rank of the local system $\mathcal{M}_\psi$, defined by Equation~\eqref{eqn:M}, is $1$ and the later identifies the character of $A^\ABV_{\phi_\KS}$ associated to the local system.
We remark that because in an open neighbourhood of our point the singular locus of $\tilde f$ is contained in $\mathcal{\tilde O}_\psi$ the intersection with $ \mathcal{\tilde O}_\psi$ plays no role in these calculations.
 %\todo{note changes here}
We make these calculations by adapting the Hessian calculations described in Section~\ref{ssec:case-KS}, which in turn are adapted from {\cite{CFMMX}*{Theorem 7.19} and documented in Appendix \ref{app:Hessian}.
We use the Jacobian of the ideal defining $U\times C_\KS^*$ to identify a system of variables, from the coordinate ring of $U\times C_\KS^*$, that may be used as local coordinates for the (completed) local ring at the chosen point in the singular locus.
The rest of the computation follows the method of Section~\ref{ssec:case-KS}.
We find that the rank of the Hessian at the chosen point is $\dim C_\psi -\codim C^*_\KS  = 40-(48-32)=24$.
This shows that the rank of the Hessian is precisely the codimension (inside $U\times C_\KS^*$) of the singular locus, and from this we conclude that the rank of $\mathcal{M}_{\psi}$  is $1$. 
Though this was automatic in the cases under consideration in \cite{CFMMX}*{Theorem 7.19}, it should not be seen as automatic for this case as there are groups for which this will not occur.
Next, we verify that the Hessian determinant is the perfect square of a function so that we may conclude the representation associated to $\mathcal{M}_\psi$ is the trivial representation.
We note that these determinants are often not perfect squares, this occurs in several of the examples considered in \cite{CFMMX}.
We explain how to use our code to reproduce these calculations in Appendix \ref{app:Hessian}.
Unfortunately, these calculations are not particularly amenable to being displayed here.

With reference to Equation~\eqref{eqn:M}, it follows from these calculations that
\[
\mathcal{M}_\psi = \1_{(\sing{\tilde f}_\psi^{-1}(0))\cap \mathcal{\tilde O}_\psi}, 
\]
and therefore 
\begin{equation}\label{eqn:21}
\Evs_{C_\KS} {\rho_\psi}_! \1_{\widetilde{C}_\psi}[\dim \widetilde{C}_\psi]
=
{\rho'''_\psi}_! \1_{(\sing{\tilde f}_\psi^{-1}(0))\cap \mathcal{\tilde O}_\psi}\\
= 
\1_{\Lambda^\text{gen}_{C_\KS}} \oplus \mathcal{L}_{\Lambda^\text{gen}_{C_\KS}},
\end{equation}
where $\rank \mathcal{L}_{\Lambda^\text{gen}_{C_\KS}} =1$ and $\mathcal{L}_{\Lambda^\text{gen}_{C_\KS}}$ corresponds to a quadratic character of $A^\ABV_{\phi_\KS}$.  
Comparing \eqref{eqn:21} with \eqref{eqn:Evs-psi} gives
\begin{equation}
\Evs_{C_\KS}\IC(\1_{C_\psi}) \oplus \1_{\Lambda^{gen}_{C_\KS}} = \1_{\Lambda^{gen}_{C_\KS}} \oplus \mathcal{L}_{\Lambda^{gen}_{C_\KS}}.
\end{equation}
It follows that $\Evs_{C_\KS}\IC(\1_{C_\psi}) =\mathcal{L}_{\Lambda^{gen}_{C_\KS}}$,
where this local system is defined by the non-trivial character of the automorphism group of the double cover 
\[ \rho_\psi''' : (\sing{\tilde f}_\psi^{-1}(0))\cap \mathcal{\tilde O}_\psi \to \Lambda^\text{gen}_{C_\KS}. \]

\subsection{A study of quadratic covers of the generic conormal bundle}\label{ssec:L}

Two $H$-equivariant double covers of $\Lambda^\text{gen}_{C_\KS}$ appear in this paper: the first is 
\[
z' : \widetilde{\Lambda}^\text{gen}_{C_\KS} \ceq  {\Lambda}^\text{gen}_{C_\KS}\times_{Z'_\text{reg}} T'_\text{reg} \to {\Lambda}^\text{gen}_{C_\KS}
\]
appearing in Proposition~\ref{prop:L} and the second is 
\[
\rho_\psi''' : (\sing{\tilde f}_\psi^{-1}(0))\cap \mathcal{\tilde O}_\psi \to \Lambda^\text{gen}_{C_\KS}
\]
appearing in Section~\ref{ssec:case-psi}.
In this section we show that these two quadratic covers are isomorphic.
It will then follow that the local system $\mathcal{L}_{\Lambda^\text{gen}_{C_\KS}}$ appearing in Section~\ref{ssec:case-psi} is the local system defined by the non-trivial character of the automorphism group of the double cover $z' : \widetilde{\Lambda}^\text{gen}_{C_\KS}  \to {\Lambda}^\text{gen}_{C_\KS}$, thus concluding the proof of Theorem~\ref{thm:geometric}

From Section~\ref{ssec:generic} recall the cover $q: \Lambda^\text{gen}_{C_\KS} \to  Z'_\text{reg}$ given by $q(x,y) = [(ac)^{-1}(bd)]$, where $a,b,c,d\in\GL_2(\CC)$ are defined by Equation~\eqref{abcd}.
Define 
\[
\begin{array}{rcl}
{\dot q}: \Lambda^\text{gen}_{C_\KS} &\to&  \GL(E_{\lambda_3}/\ker y_3)\\
(x,y) &\mapsto& (ac)^{-1}(bd).
\end{array}
\]
Using this, consider the varieties 
%\todo{Below, I guess ${\dot q}(x,y)(E^3_{\lambda_3}/\ker y_3)$ means the restriction of ${\dot q}(x,y)\in \Aut(E_{\lambda_3}/\ker y_3)$ to $E^3_{\lambda_3}/\ker y_3\cap E^3_{\lambda_3}$. Why is $\ker y_3\cap E^3_{\lambda_3} = 0$? I think this might follow from the rank triangle conditions or the Killing form condition}
\[
\Lambda''_{C_\KS}
\ceq
\left\{ (x,y,E^3_{\lambda_3}) \in \Lambda^\text{gen}_{C_\KS}\times\operatorname{Gr}^3(E_{\lambda_3})\ \Big\vert
\begin{array}{c} 
\ker y_3\subset E^3_{\lambda_3}\\
{\dot q}(x,y)(E^3_{\lambda_3}/\ker y_3) \subset E^3_{\lambda_3}/\ker y_3
\end{array}
\right\}
\]
and
\[
\Lambda'''_{C_\KS}
\ceq\!
\left\{ (x,y,E^3_{\lambda_3},\lambda) \in \Lambda^\text{gen}_{C_\KS}\times\operatorname{Gr}^3(E_{\lambda_3}) \times \mathbb{G}_\text{m}\,\Big\vert\!
\begin{array}{c} 
\ker y_3\subset E^3_{\lambda_3}\\
({\dot q}(x,y)\!-\!\lambda)(E^3_{\lambda_3}/\ker y_3) = 0
\end{array}\!
\right\}.
\]
Note that in the definition of $\Lambda''_{C_\KS}$ the condition ${\dot q}(x,y)(E^3_{\lambda_3}/\ker y_3) \subset E^3_{\lambda_3}/\ker y_3$ encodes that $E^3_{\lambda_3}/\ker y_3$ is an eigenspace for ${\dot q}(x,y)$,
whereas in the definition of $\Lambda'''_{C_\KS}$  the condition $({\dot q}(x,y) - \lambda)(E^3_{\lambda_3}/\ker y_3) = 0$  encodes that $E^3_{\lambda_3}/\ker y_3$ is an eigenspace for ${\dot q}(x,y)$ with eigenvalue $\lambda$.
It is thus immediate that the natural map $\Lambda'''_{C_\KS} \rightarrow \Lambda''_{C_\KS}$ is an isomorphism.

We now claim that the natural map $(\sing{\tilde f}_\psi^{-1}(0))\cap \mathcal{\tilde O}_\psi \rightarrow \Lambda^\text{gen}_{C_\KS}\times\operatorname{Gr}^3(E_{\lambda_3})$
defined by projection to $\Lambda^\text{gen}_{C_\KS}$ and the three-dimensional vector space in the partial flag induces an isomorphism to $\Lambda''_{C_\KS}$.
To see this we first note that the map is equivariant.
We next note that by direct inspection the map induces an isomorphism over $(x_\KS,y_\KS(t_1,t_2))$.
Because $\Lambda^\text{gen}_{C_\KS}$ is the $H$ orbit of $(x_\KS,y_\KS(t_1,t_2))$ it follows that the induced map
\[ (\sing{\tilde f}_\psi^{-1}(0))\cap \mathcal{\tilde O}_\psi \rightarrow \Lambda''_{C_\KS} \]
is indeed an isomorphism. 
Now define 
\[
\begin{array}{rcl}
\Lambda'''_{C_\KS} &\to& \widetilde{\Lambda}^\text{gen}_{C_\KS}\\
(x,y,E^3_{\lambda_3},\lambda) &\mapsto& (x,y,\diag(\lambda,\det{\dot q}(x,y)/\lambda)).
\end{array}
\]
This is an $H$-equivariant isomorphism and clearly commutes with the $H$-equivariant projections $\Lambda''_{C_\KS} \to \Lambda^\text{gen}_{C_\KS}$ and $z' : \widetilde{\Lambda}^\text{gen}_{C_\KS}  \to \Lambda^\text{gen}_{C_\KS}$.

Combining the maps above, it follows that 
\[
z' : \widetilde{\Lambda}^\text{gen}_{C_\KS} \ceq  {\Lambda}^\text{gen}_{C_\KS}\times_{Z'_\text{reg}} T'_\text{reg} \to {\Lambda}^\text{gen}_{C_\KS}
\]
is isomorphic to 
\[
\rho_\psi''' : (\sing{\tilde f}_\psi^{-1}(0))\cap \mathcal{\tilde O}_\psi \to \Lambda^\text{gen}_{C_\KS}
\]
as covers $\Lambda^\text{gen}_{C_\KS}$. 
This concludes the proof of Theorem~\ref{thm:geometric}.

\subsection{Speculation on endoscopy for the Kashiwara-Saito representation}

%\todo[inline]{Though I don't specifically see any mistakes in the following I feel maybe we should need twisted endoscopy to get an element in the fundamental group.\\
%I am not convinced there isn't some strange error though.\\
%This comment can be safely deleted.
%}

Motivated by the theory of endoscopy, we note that the algebraic group
\begin{equation}\label{def:calS}
\mathcal{S}^\ABV_{\phi_\KS} \ceq Z_{H}(\{(x_\KS,y_\KS(t)) \tq t\in T'_\text{reg}\})
\end{equation}
determines non-trivial group homomorphisms 
\begin{equation}\label{StoA}
\mathcal{S}^\ABV_{\phi_\KS}\to \pi_0(\mathcal{S}^\ABV_{\phi_\KS}) \to A^\ABV_{\phi_\KS},
\end{equation}
defined as follows.
First note that $\mathcal{S}^\ABV_{\phi_\KS}$ has two connected components: the identity component is the group of $h= (h_0,h_1,h_2,h_3,h_4)\in H$ for which $h_0\in T$ and
\[
h_1 =
\begin{pmatrix}
h_0 & u\\
0 & h_0
\end{pmatrix}, \quad
h_2 = 
\begin{pmatrix}
h_0 & 0\\
0 & h_0
\end{pmatrix}, \quad
h_3 =
\begin{pmatrix}
h_0 & v\\
0 & h_0
\end{pmatrix}, \quad
h_4 = h_0;
\]
the other component is the coset represented by
$s_\KS \ceq (s_0,s_1,s_2,s_3,s_4)\in Z_{H}(x_\KS)$ for which $s_0 = \left(\begin{smallmatrix} 0 & 1 \\ -1 & 0 \end{smallmatrix}\right)$ and 
\[
s_1 =
\begin{pmatrix}
s_0 & 0\\
0 & s_0
\end{pmatrix}, \quad
s_2 = 
\begin{pmatrix}
s_0 & 0\\
0 & s_0
\end{pmatrix}, \quad
s_3 =
\begin{pmatrix}
s_0 & 0\\
0 & s_0
\end{pmatrix}, \quad
s_4 = s_0.
\]
As an element of $\GL_{16}(\CC)$, $s_\KS$ is the block matrix with $8$ copies of $s_0$ down the diagonal.
Now define $\pi_0(\mathcal{S}^\ABV_{\phi_\KS}) \to A^\ABV_{\phi_\KS}$ as follows.
The image of the component of $s_\KS$ is the non-trivial automorphism of the cover $z' : \widetilde{\Lambda}^\text{gen}_{C_\KS} \to \Lambda^\text{gen}_{C_\KS}$ defined by
\[
s_\KS : (x_\KS, y_\KS(t), t) \mapsto (x_\KS, y_\KS(t), s_0ts_0^{-1}).
\]

Now, building on \cite{CFMMX}*{Definition 3} and \cite{CFZ:unipotent}*{Definition 4}, consider the distribution
\begin{equation}\label{eqn:Thetas}
\Theta_{\phi_\KS,s} 
\ceq \sum_{\pi\in \Pi^\ABV_{\phi_\KS}(G(F))} (-1)^{\dim(\phi_\KS)-\dim(\phi_\pi)} \trace \langle s , \pi\rangle\ \Theta_\pi
\end{equation}
where we identify $s\in \mathcal{S}^\ABV_{\phi_\KS}$ with its image in $A^\ABV_{\phi_\KS}$ using Equation~\eqref{StoA}.
By Theorem~\ref{thm:geometric} and the work above, if $s_1\in \mathcal{S}^\ABV_{\phi_\KS}$ is in the component of $s_\KS$ then the value of the character $\langle - , \pi\rangle$ at $s_1$ is  $-1$, so
\begin{equation}
\Theta_{\phi_\KS,s_1} = \Theta_{\pi_\KS} - \Theta_{\pi_\psi},
\end{equation}
while if $s_0\in (\mathcal{S}^\ABV_{\phi_\KS})^\circ$ then
\begin{equation}
\Theta_{\phi_\KS,s_0} = \Theta_{\pi_\KS} + \Theta_{\pi_\psi}.
\end{equation}
We ask if $\phi_\KS$ factors as a Langlands parameter through $\dualgroup{G}' \ceq Z_{\GL_{16}(\CC)}(s)$, so $\phi_\KS = r\circ \phi'_{\KS}$ for some Langlands parameter $\phi'_{\KS}$ for $G'$, where $r: Z_{\GL_{16}(\CC)}(s) \hookrightarrow \GL_{16}(\CC)$ is inclusion. 
We further ask if for every $f$ in the Hecke algebra $C^\infty_c(\GL_{16}(F))$, whether it is true that
\[
\Theta_{\phi_\KS,s}(f) = \Theta_{\phi'_\KS}(f'),
\]
where $f'$ is the Langlands-Shelstad transfer to $C^\infty_c(G'(F))$ of $f$?
If the spectral endoscopic transfer is given by parabolic induction in this case, then this seems to be impossible for $s\in \mathcal{S}^\ABV_{\phi_\KS}$ in the component of $s_\KS$.  We thank the referee for this important observation.

%%%

\let\normalsize\small
   \appendix
 \small

\section{Macaulay2 instructions}\label{sec:computations}

The purpose of this appendix is to present the Macaulay2 code used in the proof of the geometric main result, Theorem~\ref{thm:geometric}. This code is available from the GitHub repository \href{https://github.com/CliftonCunningham/Voganish-Project}{Voganish-Project},
%\footnote{{\tt https://github.com/CliftonCunningham/Voganish-Project}}
 which includes:  
\begin{enumerate}
\item[] \code{KSrepresentations.m2},
\item[] \code{ComputeDuals.m2}, 
\item[] \code{NetworkFlow.m2}, 
\item[] \code{ComputeCover.m2}, 
\item[] \code{VanishingCycles.m2}, 
\item[] \code{VoganV.m2}, 
\item[] \code{PSNF.m2} (\code{partialSmithNormalForm}). 
\end{enumerate}
Detailed descriptions of each function used in this appendix can be found in these files. 
Our code for \code{ComputeDuals.m2} uses \code{NetworkFlow.m2} which, as the title suggests, relies on ideas from the theory of network flows; in particular, it does not use the Moeglin-Waldspurger algorithm \cite{MW:involution} for the corresponding duality on multisegments.
We remark that $V$ is an example of a Vogan variety; see \cite{CFMMX}*{Section 4.2} for the definition. There are several functions in the files listed above, some of which we do not describe in this appendix, that can be applied to arbitrary Vogan varieties.

%\todo[backgroundcolor=yellow!30]{Some recent code changes haven't been copied over to other repository.\\ We probably want to make sure we document the updated semi-small code and Hessian code in the same style as other code was.\\
%          I don't think we currently don't explain in great detail how the new semi-small code or Hessian works. I am not sure how much detail we want, or in what style.}

\subsection{General computations}\label{ssec:generalcomputations}

In this section, we introduce functions that perform the following tasks: list all rank triangles/orbits of a Vogan variety; display an orbit in terms of its rank conditions; compute the dual of an orbit; compute the dimension of an orbit and its closure; and determine if an orbit is contained in the closure of another. We also provide a proof of Lemma~\ref{lem:Kittsix}. The code in this section is particularly relevant to Sections~\ref{ssec:ranktriangles}, \ref{ssec:Conormalbundle}, and \ref{ssec:reduction}. Note that all functions used in this section can be applied to an arbitrary Vogan variety.

We obtain a complete list of rank triangles/$H$-orbits for $V$ using the eigenspace dimensions $(2,4,4,4,2)$ of $\lambda_{\KS}(\text{Fr})$:\\

\indent \code{i1 : L := getAllStrataR(\{2,4,4,4,2\});}\\

\noindent We store an $H$-orbit of $V$ as follows:\\

\indent \code{i2 : CKS := new RankConditions from (\{2,4,4,4,2\},\{\{2,2,2,2\},\{0,2,0\},\{0,0\},\{0\}\});}\\

\indent \code{i3 : Cpsi := new RankConditions from (\{2,4,4,4,2\},\{\{2,3,3,2\},\{1,2,1\},\{1,1\},\{0\}\});}\\

\indent \code{i4 : CL := new RankConditions from (\{2,4,4,4,2\},\{\{2,4,2,2\},\{2,2,0\},\{0,0\},\{0\}\});}\\

\noindent We compute the duals of our $H$-orbits:\\

\indent \code{i5 : computeDual CKS}\\
\

\indent \code{o5 = 2 \ 4 \ 4 \ 4 \ 2}\\
\hspace*{1.5cm} \code{2 \ 2 \ 2 \ 2}\\
\hspace*{1.9cm}\code{0 \ 2 \ 0}\\
\hspace*{2.2cm}\code{0 \ 0}\\
\hspace*{2.5cm}\code{0}\\

\indent \code{i6 : computeDual Cpsi}\\
\

\indent \code{o6 = 2 \ 4 \ 4 \ 4 \ 2}\\
\hspace*{1.5cm} \code{2 \ 3 \ 3 \ 2}\\
\hspace*{1.9cm}\code{1 \ 2 \ 1}\\
\hspace*{2.2cm}\code{1 \ 1}\\
\hspace*{2.5cm}\code{0}\\

\indent \code{i7 : CR := computeDual CL}\\
\

\indent \code{o7 = 2 \ 4 \ 4 \ 4 \ 2}\\
\hspace*{1.5cm} \code{2 \ 2 \ 4 \ 2}\\
\hspace*{1.9cm}\code{0 \ 2 \ 2}\\
\hspace*{2.2cm}\code{0 \ 0}\\
\hspace*{2.5cm}\code{0}\\

\noindent We form an ideal consisting of polynomial equations that describes the closure of an $H$-orbit:\\

\indent \code{i8: getEquations CKS;}\\

\noindent Recall that the dimension of an $H$-orbit coincides with the dimension of its closure. Thus, the dimension of an $H$-orbit can be obtained from the ideal that describes its closure:\\

\indent \code{i9: dim o8}\\

\indent \code{o9 = 32}\\

\indent \code{i10: dim getEquations Cpsi}\\

\indent \code{o10 = 40}\\

\indent \code{i11: dim getEquations CR}\\

\indent \code{o11 = 36}\\

\noindent Given any two orbits $C'$ and $C$, with $C'\neq C$, we can check if $C'$ is in the closure of $C$:\\

\indent \code{i12: CKS < Cpsi;}\\

\indent \code{o12 = true}\\

\noindent Here, we use the order defined in Section~\ref{ssec:ranktriangles} by $C' < C$ if and only if $C'\subsetneq \overline{C}$. We compute the list of $H$-orbits $C$ of $V$ that satisfy $C_{\KS}< C$ and $C_{\KS}< \widehat{C}$, thus providing a proof of Lemma~\ref{lem:Kittsix}:\\

\indent \code{i12: orbits6 := \{\};}\\

\indent \code{i13: for C in L do (if CKS < C and CKS < (computeDual C) then \\
\hspace*{4.1cm} orbits6 = append(orbits6, C))}\\

\indent \code{i14: orbits6}\\

\indent \code{o15 = \{2 \ 4 \ 4 \ 4 \ 2, 2 \ 4 \ 4 \ 4 \ 2, 2 \ 4 \ 4 \ 4 \ 2, 2 \ 4 \ 4 \ 4 \ 2,}\\
\hspace*{1.9cm} \code{2 \ 2 \ 3 \ 2 \ \ \ \ 2 \ 2 \ 4 \ 2 \ \ \ \ 2 \ 3 \ 2 \ 2 \ \ \ \ 2 \ 3 \ 3 \ 2}\\
\hspace*{2.3cm}\code{0 \ 2 \ 1 \ \ \ \ \ \ \ 0 \ 2 \ 2 \ \ \ \ \ \ \ 1 \ 2 \ 0 \ \ \ \ \ \ \ 1 \ 2 \ 1}\\
\hspace*{2.6cm}\code{0 \ 0 \ \ \ \ \ \ \ \ \ \ 0 \ 0 \ \ \ \ \ \ \ \ \ \ 0 \ 0 \ \ \ \ \ \ \ \ \ \ 0 \ 0}\\
\hspace*{2.9cm}\code{0 \ \ \ \ \ \ \ \ \ \ \ \ \ 0 \ \ \ \ \ \ \ \ \ \ \ \ \ 0 \ \ \ \ \ \ \ \ \ \ \ \ \ 0}\\
\indent\indent\indent \code{--------------------------------------------------------------------------------------------------------------------------------}\\
\indent\indent\indent\indent \code{2 \ 4 \ 4 \ 4 \ 2, 2 \ 4 \ 4 \ 4 \ 2\}}\\
\hspace*{1.9cm} \code{2 \ 3 \ 3 \ 2 \ \ \ \ 2 \ 4 \ 2 \ 2 }\\
\hspace*{2.3cm}\code{1 \ 2 \ 1 \ \ \ \ \ \ \ 2 \ 2 \ 0 }\\
\hspace*{2.6cm}\code{1 \ 1 \ \ \ \ \ \ \ \ \ \ 0 \ 0 }\\
\hspace*{2.9cm}\code{0 \ \ \ \ \ \ \ \ \ \ \ \ \ 0}\\

\noindent This computation, together with Proposition~\ref{prop:ordering}, allows us to reduce Theorem~\ref{thm:geometric} to Equations~\eqref{r}, \eqref{m}, \eqref{R} and \eqref{psi}.

%
%\indent \code{o12: getSupstrata2(L,CKS)}\\
%\fi
%

\subsection{Microlocal vanishing cycle calculations}
\label{ssec: MVC appendix}

In this section, we illustrate how to reproduce the Macaulay2 computations used in the proof of Equations~\eqref{r}, \eqref{m}, \eqref{R} and \eqref{psi}. We introduce functions that perform the following tasks: verify that our covers are semi-small; generate the relevant Jacobians needed in the proofs;  substitute particular points into these Jacobians; and perform elementary row and column operations on matrices over polynomial rings. The code in this section is relevant to Sections~\ref{ssec:overview} - \ref{ssec:case-psi}.

All functions in this section, with the exception of \code{partialSmithNormalForm} (\code{PSNF.m2}), are specific to $V$. Throughout this appendix $t_1$ refers to \code{y\_\{2,0,2\}} in the code, and $t_2$ to \code{y\_\{2,1,3\}}.

\subsubsection{$\Evs_{C_{\KS}}\IC(\1_{C_r})$}
\label{Assec: Cr}

The computations in this section are specific to Section~\ref{ssec:case-r}. 
\iffalse
The equations that describe the cover $\widetilde{C}_r$ of $\overline{C}_r$ are the generators of the ideal\\

\indent \code{i1 : coverCr();}\\

\noindent We compute that our cover $\rho_r:\widetilde{C}_r\rightarrow\overline{C}_r$ is semi-small: \\

\indent \code{i2 : smallCr()}\\

\indent \code{o2 = (\{2 \ 4 \ 4 \ 4 \ 2, 2 \ 4 \ 4 \ 4 \ 2, 2 \ 4 \ 4 \ 4 \ 2\}, \{\})}\\
\hspace*{1.9cm} \code{2 \ 2 \ 2 \ 1 \ \ \ \ 2 \ 2 \ 2 \ 2 \ \ \ \ 2 \ 2 \ 3 \ 2}\\
\hspace*{2.3cm}\code{0 \ 1 \ 1 \ \ \ \ \ \ \ 0 \ 1 \ 1 \ \ \ \ \ \ \ 0 \ 2 \ 1}\\
\hspace*{2.6cm}\code{0 \ 0 \ \ \ \ \ \ \ \ \ \ 0 \ 0 \ \ \ \ \ \ \ \ \ \ 0 \ 0}\\
\hspace*{2.9cm}\code{0 \ \ \ \ \ \ \ \ \ \ \ \ \ 0 \ \ \ \ \ \ \ \ \ \ \ \ \ 0}\\

\noindent The second list in \code{o2} tells us that
\[
2\dim(\rho_r^{-1}(x)) + \dim C_i \leq \dim\widetilde{C}_r, \quad x\in C_i,
\]
for all orbits $C_i\leq C_r$. Equality is obtained if $C_i$ belongs to the first list of \code{o2}. Together, these two statements tell us that $\rho_r:\widetilde{C}_r\rightarrow\overline{C}_r$ is semi-small.
\fi
The equations that describe the affine chart $U\subseteq \widetilde{C}_r$ are the generators of the ideal \code{coverCr()}. One can optionally verify that the cover is semi-small using \code{smallCr()}. The output lists the relevant strata for the cover. We compute the Jacobian for the equations that describe $U\times C_{\KS}^\ast$ and $\tilde{f}_r^{-1}(0)$:\\

\indent \code{i1 : JacCr();}\\

\noindent Next, we evaluate the Jacobian \code{o1} at the generic point $(x_\KS,y_\KS(t_1,t_2))$, with $\diag(t_1,t_2)\in T'_\text{reg}$, and further add the conditions that describe $U\cap\rho_r^{-1}(x_{\KS})$: \\

\indent \code{i2: subJacCr(o1);}\\

\noindent The resulting matrix \code{o2} is over $\mathbb{Q}[t_1,t_2,c_3]$.% \footref{fn:t1t2}.  
We perform elementary row and column operations on \code{o2}: \\

\indent \code{i3: partialSmithNormalForm(o2);}\\

\noindent The output \code{o3} is a $45\times 45$ block diagonal matrix whose blocks consist of a $43\times 43$ identity matrix and the matrix
\[
\begin{bmatrix}
t_1c_3+c_3^2 & -t_1-c_3\\
t_2c_3^2 & -t_2c_3
\end{bmatrix}.
\]

Recall that the fibre of $\rho_r$ above $C_{\KS}$ is $\mathbb{P}^1\cong \{[a:b]\}$. The above calculations are for verifying smoothness in the affine chart $U$ that contains the points $\{[1:b]\}$. We must also check smoothness in an affine chart for $\widetilde{C}_r$ that contains the points $\{[a:1]\}$. To do this, we use the affine chart $U'$ for $\widetilde{C}_r$, which is different from the one used in \code{JacCr()}, corresponding to:
\[
E_{\lambda_1}^2=\begin{bmatrix}
1 & 0\\
0 & 1\\
a_1 & a_2\\
a_3 & a_4
\end{bmatrix},\quad
E_{\lambda_2}^1=\begin{bmatrix}
b_1\\
b_2\\
b_3\\
1
\end{bmatrix}, \quad
E_{\lambda_2}^3=\begin{bmatrix}
1 & 0 & 0\\
0 & 1 & 0\\
c_1 & c_2 & c_3\\
0 & 0 & 1
\end{bmatrix},\quad
E_{\lambda_3}^2=\begin{bmatrix}
1 & 0\\
0 & 1\\
d_1 & d_2\\
d_3 & d_4
\end{bmatrix}.
\]

\noindent We then continue with the same approach illustrated above. Compute the Jacobian for the equations that describe $U'\times C_{\KS}^\ast$ and $\tilde{f}_r^{-1}(0)$:\\ 

\indent \code{i4: JacCr2();}\\

\noindent We evaluate the Jacobian \code{o4} at the generic point $(x_\KS,y_\KS(t_1,t_2))$, with $\diag(t_1,t_2)\in T'_\text{reg}$, and further add the conditions that describe $U'\cap\rho_r^{-1}(x_{\KS})$: \\

\indent \code{i5: subJacCr(o4);}\\

\noindent The resulting matrix \code{o5} is over $\mathbb{Q}[t_1,t_2]$.% \footref{fn:t1t2}. 
We perform  elementary row and column operations on \code{o5}:\\

\indent \code{i6: partialSmithNormalForm(o5);}\\

\noindent The output \code{o6} is a $45\times 45$ block diagonal matrix whose blocks consist of a $43\times 43$ identity matrix and the matrix
\[
\begin{bmatrix}
-t_1c_3-1 & t_1c_3^2+c_3\\
-t_2 & t_2c_3
\end{bmatrix}.
\]
The rank of $\code{o6}$ is $43$ if and only if $-t_1c_3-1=0$ and $-t_2=0$. But, $t_2\neq 0$ since $\diag(t_1,t_2)\in T'_{\text{reg}}$. And so the rank of \code{o6} is $44$. Since the rank of \code{o6} coincides with the generic rank of \code{o4}, we are smooth.

\subsubsection{$\Evs_{C_{\KS}}\IC(\1_{C_m})$}
\label{Assec: Cm}

The computations in this section are specific to Section~\ref{ssec:case-m}.
The equations that describe the affine chart $U\subseteq \widetilde{C}_m$ are the generators of the ideal \code{coverCm()}. One can optionally verify that the cover is semi-small using \code{smallCm()}.  The output lists the relevant strata for the cover. We compute the Jacobian for the equations that describe $U\times C_{\KS}^\ast$ and $\tilde{f}_m^{-1}(0)$:\\

\indent \code{i1 : JacCm();}\\

\noindent Next, we evaluate the Jacobian \code{o1} at the generic point $(x_\KS,y_\KS(t_1,t_2))$, with $\diag(t_1,t_2)\in T'_\text{reg}$, and further add the conditions that describe $U\cap\rho_m^{-1}(x_{\KS})$: \\

\indent \code{i2: subJacCm(o1);}\\

\noindent The resulting matrix \code{o2} is over $\mathbb{Q}[t_1,t_2,c_3,g]$. % \footref{fn:t1t2}. 
 We perform elementary row and column operations on \code{o2}:\\

\indent \code{i3: partialSmithNormalForm(o2);}\\

\noindent The output \code{o3} is a $48\times 47$ block diagonal matrix whose blocks consist of a $45\times 45$ identity matrix and the matrix

\[
\begin{bmatrix}
c_3-g & -c_3+g\\
-t_1-c_3 & t_1+c_3\\
-t_2c_3 & t_2c_3
\end{bmatrix}.
\]\\
Let $p : \mathbb{P}^1\times\mathbb{P}^1 \to \rho_m^{-1}(x_\KS)$ be the isomorphism $p([a:b],[c:d])\! =\! (E_{\lambda_1}^2, E_{\lambda_2}^1, E_{\lambda_2}^3, E_{\lambda_3}^2, E_{\lambda_4}^1)$ where
\[
E_{\lambda_1}^2 = \begin{bmatrix}
1 & 0\\
0 & 1\\
0 & 0\\
0 & 0
\end{bmatrix},\quad
%\]
%\[
E_{\lambda_2}^1=\begin{bmatrix}
0\\
0\\
a\\
b
\end{bmatrix} ,\quad
E_{\lambda_2}^3=\begin{bmatrix}
1 & 0 & 0\\
0 & 1 & 0\\
0 & 0 & a\\
0 & 0 & b
\end{bmatrix},
\]
\[
E_{\lambda_3}^2=\begin{bmatrix}
1 & 0\\
0 & 1\\
0 & 0\\
0 & 0
\end{bmatrix}, \quad E_{\lambda_3}^3=\begin{bmatrix}
1 & 0 & 0\\
0 & 1 & 0\\
0 & 0 & c\\
0 & 0 & d
\end{bmatrix},\quad
%\]
%\[
E_{\lambda_4}^1=\begin{bmatrix}
c\\
d
\end{bmatrix}.
\]
The above calculations are for verifying smoothness in the affine chart $U$ that contains the points $p([1:b], [1:d])$. We use the same method as above to verify smoothness in a collection of affine charts for $\widetilde{C}_m$ that, together with $U$, cover $\widetilde{C}_m\cap\rho_m^{-1}(x_{\KS})$. For each of these charts, we list below the relevant points of the fibre that it contains, together with the corresponding Jacobian and analogous function  to \code{subJacCm()} needed for the computation:

\begin{enumerate}

\item[(2)] $p([1:b],[c:1])$ use \code{JacCm2()} and \code{subJaCm};

\item[(3)] $p([a:1],[1:d])$ use \code{JacCm3()} and \code{subJaCm};

\item[(4)] $p([a:1],[c:1])$ use \code{JacCm4()} and \code{subJaCm}.

\end{enumerate}

\subsubsection{$\Evs_{C_{\KS}}\IC(\1_{C_R})$}
\label{Assec: CR}
The computations in this section are specific to Section~\ref{ssec:case-R}. 
\iffalse
The equations that describe the cover $\widetilde{C}_R$ of $\overline{C}_R$ are the generators of the ideal

\indent \code{i1 : coverCR();}\\

\noindent We compute that our cover $\rho_R:\widetilde{C}_R\rightarrow\overline{C}_R$ is small:\\

\indent \code{i2 : smallCR()}\\

\indent \code{o2 = (\{2 \ 4 \ 4 \ 4 \ 2\}, \{\})}\\
\hspace*{1.9cm} \code{2 \ 2 \ 4 \ 2 }\\
\hspace*{2.3cm}\code{0 \ 2 \ 2 }\\
\hspace*{2.6cm}\code{0 \ 0}\\
\hspace*{2.9cm}\code{0}\\

\noindent The first list in \code{o2} consists of all orbits $C_i\leq C_R$ that satisfy
\[
2\dim(\rho_R^{-1}(x)) + \dim C_i = \dim\widetilde{C}_R,\quad x\in C_i;
\]
we see that $C_R$ is the only such orbit. The second list in \code{o2} says that if $C_i\leq C_R$, then
\[
2\dim(\rho_R^{-1}(x)) + \dim C_i \leq \dim\widetilde{C}_R, \quad x\in C_i.
\]
The third list, being empty, tells us we do not need manually check any of the boundary strata.
Together, these statements tell us that $\rho_R:\widetilde{C}_R\rightarrow\overline{C}_R$ is small.
\fi
The equations that describe the affine chart $U\subseteq \widetilde{C}_R$ are the generators of the ideal \code{coverCR()}. One can optionally verify that the cover is small using \code{smallCR()}.  The output lists the relevant strata for the cover. We compute the Jacobian for the equations that describe $U\times C_{\KS}^\ast$ and $\tilde{f}_R^{-1}(0)$:\\

\indent \code{i1 : JacCR();}\\

\noindent Next, we evaluate the Jacobian \code{o1} at the generic point $(x_{\KS},y_{\KS}(t_1,t_2))$, with $\diag(t_1,t_2)\in T'_{\text{reg}}$, and further add the conditions that correspond to $\rho_R(x_{\KS},E)=x_{\KS}$:\\

\indent \code{i2: subJacCR(o1);}\\

\noindent The resulting matrix \code{o2} is over $\mathbb{Q}[t_1, t_2]$. We perform elementary row and column operations on \code{o2}:\\

\indent \code{i3 : partialSmithNormalForm(o2);}\\

\noindent The output \code{o3} is a $41\times 41$ identity matrix.

\subsubsection{$\Evs_{C_{\KS}}\IC(\1_{C_\psi})$}
\label{Assec: Cpsi}

The computations in this section are specific to Section~\ref{ssec:case-psi}.
The equations that describe the affine chart $U\subseteq \widetilde{C}_\psi$ are the generators of the ideal returned by \code{coverCpsi()}. In this case we must verify that our cover $\rho_\psi:\widetilde{C}_\psi\rightarrow\overline{C}_\psi$ is semi-small:\\

\indent \code{i1 : smallCpsi()}\\

\indent \code{o1 = (\{2 \ 4 \ 4 \ 4 \ 2, 2 \ 4 \ 4 \ 4 \ 2\}, \{\}, \{\})}\\
\hspace*{1.9cm} \code{2 \ 2 \ 2 \ 2 \ \ \ \ 2 \ 3 \ 3 \ 2}\\
\hspace*{2.3cm}\code{0 \ 2 \ 0 \ \ \ \ \ \ \ 1 \ 2 \ 1 }\\
\hspace*{2.6cm}\code{0 \ 0 \ \ \ \ \ \ \ \ \ \ 1 \ 1 }\\
\hspace*{2.9cm}\code{0 \ \ \ \ \ \ \ \ \ \ \ \ \ 0}\\

\noindent The second list in \code{o1} tells us that
\[
2\dim(\rho_\psi^{-1}(x)) + \dim C_i \leq \dim\widetilde{C}_\psi, \quad x\in C_i
\]
for all orbits $C_i\leq C_\psi$. Equality is obtained if $C_i$ is one of the orbits in the first list of \code{o1}. Together, these two statements tell us that $\rho_\psi:\widetilde{C}_\psi\rightarrow\overline{C}_\psi$ is semi-small. Next, we compute the Jacobian for the equations that describe $U\times C_{\KS}^\ast$ and $\tilde{f}_\psi^{-1}(0)$:\\

\indent \code{i2 : JacCpsi();}\\

\iffalse
\noindent One can optionally verify that the cover is semi-small using \code{smallCpsi()}. 
\fi

\noindent We evaluate the Jacobian \code{o2} at the generic point $(x_\KS,y_\KS(t_1,t_2))$, with $\diag(t_1,t_2)\in T'_\text{reg}$, and further add the conditions that describe $U\cap\rho_\psi^{-1}(x_{\KS})$: \\

\indent \code{i3: subJacCpsi(o2);}\\

\noindent The resulting matrix \code{o3} is over $\mathbb{Q}[t_1,t_2,b_1,b_2,c_3,f_3]$. %\footref{fn:t1t2}. 
We perform elementary row and column operations on \code{o3}:\\

\indent \code{i4: partialSmithNormalForm(o3);}\\

\noindent To make \code{o4} easier to read, simply compute \code{matrix o4}. The output \code{o4} is a $52\times 51$ block diagonal matrix whose blocks consist of a $45\times 45$ identity matrix and the matrix \\
\[
B_\psi = \begin{bmatrix}
    \vdots & \vdots & \vdots & \vdots & \vdots &\vdots\\
    v   & b_1v & f_3v &  b_1f_3v & v & f_3v\\
    \vdots & \vdots & \vdots & \vdots & \vdots & \vdots
\end{bmatrix},\quad
%\]
%where\\
%\[
\text{with}\quad
v=\begin{bmatrix}
-b_1+f_3\\
-b_1+f_3\\
-c_3 + f_3\\
t_1b_2 + b_2c_3 +1\\
t_2b_2c_3 + b_1\\
t_1b_2+b_2c_3 +1\\
t_2b_2c_3 + b_1
\end{bmatrix}.
\]

Recall that the fibre of $\rho_\psi$ above $x_{\KS}$ is the product of $\mathbb{P}^1\cong \{ [e:h] \} $ with the variety
\[
F\ceq \{([a:b], [c:d], [u:v:w:z]) \in \mathbb{P}^1\times \mathbb{P}^1\times \mathbb{P}^3 \tq av-bu=0,\ cz-dw=0 \},
\]
which is the blow up of  $\mathbb{P}^3$ along the two copies of $\mathbb{P}^1$ given by $(u,v)=(0,0)$ and $(w,z)=(0,0)$.
For the calculations below, we find it convenient to use the map $p : (\mathbb{P}^1)^{\times 4} \to  \rho_\psi^{-1}(x_\KS)$ defined by 
\[
p([a:b], [c:d], [e:h], [\beta:\alpha]) = (E_{\lambda_1}^2,E_{\lambda_2}^1,E_{\lambda_2}^3,E_{\lambda_3}^1,E_{\lambda_3}^2,E_{\lambda_3}^3,E_{\lambda_4}^1)
\]
where
\[
E_{\lambda_1}^2=\begin{bmatrix}
1 & 0\\
0 & 1\\
0 & 0\\
0 & 0
\end{bmatrix},\quad
%\]
%\[
E_{\lambda_2}^1=\begin{bmatrix}
\beta a\\
\beta b\\
\alpha c\\
\alpha d
\end{bmatrix}, \quad
E_{\lambda_2}^3=\begin{bmatrix}
1 & 0 & 0\\
0 & 1 & 0\\
0 & 0 & c\\
0 & 0 & d
\end{bmatrix},
\]
\[
E_{\lambda_3}^1=\begin{bmatrix}
a\\
b\\
0\\
0
\end{bmatrix}, \quad
E_{\lambda_3}^2=\begin{bmatrix}
1 & 0\\
0 & 1\\
0 & 0\\
0 & 0 
\end{bmatrix}, \quad
E_{\lambda_3}^3=\begin{bmatrix}
1 & 0 & 0\\
0 & 1 & 0\\
0 & 0 & e\\
0 & 0 & h
\end{bmatrix},\quad
%\]
%\[
E_{\lambda_4}^1=\begin{bmatrix}
e\\
h
\end{bmatrix}.
\]
The calculations above are for finding singularities in the affine chart $U$ that contains $p([1:b],[1:d],[1:h],[1:\alpha])$; these calculations show that the intersection of $\sing(\tilde{f}_\psi^{-1}(0))$ with $(U\times C_{\KS}^\ast)\cap(\rho_\psi')^{-1}(x_\KS,y_\KS(t_1,t_2))$ is
\[
\{ 
p([1:0],[1:0],[1:0],[1:-1/t_1]), p([1:t_2-t_1],[1:t_2-t_1],[1:t_2-t_1],[1:-1/t_2])\}.
\]
%\todo{should we replace $[1:-t_1]$ with $[1:-1/t_1]$? should we replace $[1:-t_2]$ with $[1:-1/t_2]$?}
Notice that $[1:0]$ corresponds to the eigenspace of $\left(\begin{smallmatrix} t_1 & 1 \\ 0 & t_2 \end{smallmatrix}\right)$ for the eigenvalue $t_1$ and $[1:t_2-t_1]$ corresponds to the eigenspace for the eigenvalue $t_2$. 

We use the same method as above to check for singularities in a collection of affine charts that, together with $U$, cover $\widetilde{C}_\psi\cap\rho_\psi^{-1}(x_{\KS})$. In all cases, we find no new singularities. For each of these charts, we list below the relevant points of the fibre that it contains, together with the corresponding Jacobian and analogous function  to \code{subJacCpsi()} needed for the computation:

\begin{enumerate}

\item[(2)] 
$p([1:b],[1:d],[1:h],[\beta:1])$ use \code{JacCpsi2()} and \code{subJacCpsi2()};

\item[(3)] 
$p([1:b],[1:d],[e:1],[1:\alpha])$ use \code{JacCpsi3()} and \code{subJacCpsi3()};

\item[(4)] 
$p([1:b],[1:d],[e:1],[\beta:1])$ use \code{JacCpsi4()} and \code{subJacCpsi4()};

\item[(5)] 
$p([1:b],[c:1],[1:h],[1:\alpha])$ use \code{JacCpsi5()} and \code{subJacCpsi5()};

\item[(6)] 
$p([1:b],[c:1],[1:h],[\beta:1])$ use \code{JacCpsi6()} and \code{subJacCpsi6()};

\item[(7)] 
$p([1:b],[c:1],[e:1],[1:\alpha])$ use \code{JacCpsi7()} and \code{subJacCpsi7()};

\item[(8)] 
$p([1:b],[c:1],[e:1],[\beta:1])$ use \code{JacCpsi8()} and \code{subJacCpsi8()};

\item[(9)] 
$p([a:1],[1:d],[1:h],[1:\alpha])$ use \code{JacCpsi9()} and \code{subJacCpsi9()};

\item[(10)] 
$p([a:1],[1:d],[1:h],[\beta:1])$ use \code{JacCpsi10()} and \code{subJacCpsi10()};

\item[(11)] 
$p([a:1],[1:d],[e:1],[1:\alpha])$ use \code{JacCpsi11()} and \code{subJacCpsi11()};

\item[(12)] 
$p([a:1],[1:d],[e:1],[\beta:1])$ use \code{JacCpsi12()} and \code{subJacCpsi12()};

\item[(13)] 
$p([a:1],[c:1],[1:h],[1:\alpha])$ use \code{JacCpsi13()} and \code{subJacCpsi13()};

\item[(14)] 
$p([a:1],[c:1],[1:h],[\beta:1])$ use \code{JacCpsi14()} and \code{subJacCpsi14()};

\item[(15)] 
$p([a:1],[c:1],[h:1],[1:\alpha])$ use \code{JacCpsi15()} and \code{subJacCpsi15()};

\item[(16)] 
$p([a:1],[c:1],[e:1],[\beta:1])$ use \code{JacCpsi16()} and \code{subJacCpsi16()}.

\end{enumerate}

\subsection{Local Hessian calculations}\label{app:Hessian}

In this section we document how to reproduce our calculation to find the Hessians and Hessian determinants.
The code which performs our Hessian calculations can all be found in \code{KSHess.m2}.
We first focus on the case relevant for Section \ref{ssec:case-psi}.
First use the function\\

\indent\code{i1 : eqcpsi = EqCpsi();}\\

\noindent to compute the defining ideals and functions $\tilde{f}_\psi$ for the cover $\tilde{\mathcal{O}}_\psi$ for $C_\psi$ on which we wish to compute the Hessian.
Next, the function\\

\indent\code{i2 : locvar = getLocalCoordList( eqcpsi\#1, subXCpsi );}\\

\noindent computes a system of local coordinates which generate the completed local ring in an open neighbourhood of the point described by \code{subXCpsi}.
Then the function\\

\indent\code{i3 : impvarpart = getImpVarPartials( eqcpsi\#1 , locvar, subXCpsi );}\\

\noindent computes the partial derivatives of the implicit functions in terms of the chosen local coordinates. The resulting formulas are valid in an open neighbourhood of the point described by \code{subXCpsi}.
Now the function \\

\indent\code{i4 : hess = getHessian(eqcpsi\#0, locvar, impvarpart);}\\

\noindent computes the hessian of $\tilde{f}_\psi$ in terms of the chosen local coordinates using the precomputed formulas for the partial derivatives for the implicit functions.
At this point we may use\\

\indent\code{i5 : rnk = rank( subXCpsi( hess ) ) }\\

\indent\code{o5 = 24}\\

\noindent to obtain the rank of the Hessian at the point described by \code{subXCpsi}. This rank will be locally constant on an open neighbourhood of \code{subXCpsi} on the singular locus of $\tilde{f}_\psi$.
We obtain a rank of $24$.
Now the function\\

\indent\code{i6 : subhess = getSubHessian( hess, subXCpsi );}\\

\noindent returns a minor of the Hessian with non-zero determinant which has maximal rank on an open neighbourhood of \code{subXCpsi} on the singular locus of $\tilde{f}_\psi$.
Finally the function\\

\indent\code{i7 : iso=findIso(subhess\#0, subXCpsi);}\\

\noindent optimistically searches for a large isotropic subspace. The  subspace it finds is isotropic on an open neighbourhood of \code{subXCpsi} on the singular locus of $\tilde{f}_\psi$. 
 If \code{ numcolumns(iso\#0) } is half of \code{ rnk } then the Hessian determinant is a square on an open neighbourhood of \code{subXCpsi} on the singular locus of $\tilde{f}_\psi$. 
We find a $12$ dimensional isotropic subspace

To perform the calculations for Section \ref{ssec:case-KS} relevant to verifying that the character of the local system $\Evs_{C_\KS}\IC(\1_{C_\KS})$ is trivial simply use
\code{EqCk();} and \code{subXCks} in the above. 
In this case the Hessian has rank $16$ and the isotropic subspace has dimension $8$.

\normalsize
 
% references

\begin{bibdiv}
\begin{biblist}

\bib{ABV}{book}{
   author={Adams, Jeffrey},
   author={Barbasch, Dan},
   author={Vogan, David A., Jr.},
   title={The Langlands classification and irreducible characters for real reductive groups},
   series={Progress in Mathematics},
   volume={104},
   publisher={Birkh\"{a}user Boston, Inc., Boston, MA},
   date={1992},
   pages={xii+318},
%   isbn={0-8176-3634-X},
%   review={\MR{1162533}},
%   doi={10.1007/978-1-4612-0383-4},
}

\bib{Arthur:book}{book}{
   author={Arthur, James},
   title={The endoscopic classification of representations},
   series={American Mathematical Society Colloquium Publications},
   volume={61},
   note={Orthogonal and symplectic groups},
   publisher={American Mathematical Society, Providence, RI},
   date={2013},
   pages={xviii+590},
   isbn={978-0-8218-4990-3},
%   review={\MR{3135650}},
   doi={10.1090/coll/061},
}

\bib{Arthur:unipotent-motivation}{article}{
   author={Arthur, James},
   title={Unipotent automorphic representations: global motivation},
   conference={
      title={Automorphic forms, Shimura varieties, and $L$-functions, Vol.
      I},
      address={Ann Arbor, MI},
      date={1988},
   },
   book={
      series={Perspect. Math.},
      volume={10},
      publisher={Academic Press, Boston, MA},
   },
   date={1990},
   pages={1--75},
   review={\MR{1044818}},
}

\iffalse%%%%
\bib{A}{article}{
   author={Aubert, Anne-Marie},
   title={Dualit\'{e} dans le groupe de Grothendieck de la cat\'{e}gorie des
   repr\'{e}sentations lisses de longueur finie d'un groupe r\'{e}ductif $p$-adique},
   %language={French, with English summary},
   journal={Trans. Amer. Math. Soc.},
   volume={347},
   date={1995},
   number={6},
   pages={2179--2189},
   issn={0002-9947},
%   review={\MR{1285969}},
   doi={10.2307/2154931},
}
\fi%%%

\bib{BBD}{article}{
   author={Be\u{\i}linson, Alexander},
   author={Bernstein, Joseph},
   author={Deligne, Pierre},
   title={Faisceaux pervers},
%   language={French},
   conference={
      title={Analyse et topologie sur les espaces singuliers, I},
      address={Luminy},
      date={1981},
   },
   book={
      series={Ast\'{e}risque},
      volume={100},
      publisher={Soc. Math. France, Paris},
   },
   date={1982},
   pages={5--171},
%   review={\MR{751966}},
}	

\bib{BST}{article}{
   author={Barchini, Leticia},
   author={Somberg, Petr},
   author={Trapa, Peter E.},
   title={Reducible characteristic cycles of Harish Chandra modules for U(p,q) and the Kashiwara-Saito singularity},
   journal={Communications in Algebra},
   volume={47},
   date={2019},
   number={12},
   pages={4874--4888},
   doi={10.1080/00927872.2019.1596280},
}

\bib{CFMMX}{article}{
   author={Cunningham, Clifton},
   author={Fiori, Andrew},
   author={Moussaoui, Ahmed},
   author={Mracek, James},
   author={Xu, Bin},
   title={Arthur packets for $p$-adic groups by way of microlocal vanishing
   cycles of perverse sheaves, with examples},
   journal={Mem. Amer. Math. Soc.},
   volume={276},
   date={2022},
   number={1353},
   pages={},
   issn={0065-9266},
   isbn={978-1-4704-5117-2},
   isbn={978-1-4704-7019-7},
%   review={\MR{4391878}},
   doi={10.1090/memo/1353},
}

\bib{CFZ:cubics}{article}{
   author={Cunningham, Clifton},
   author={Fiori, Andrew},
   author={Zhang, Qing},
   title={Arthur packets for $G_2$ and perverse sheaves on cubics},
   journal={Adv. Math.},
   volume={395},
   date={2022},
   pages={Paper No. 108074, 74},
   issn={0001-8708},
%   review={\MR{4363577}},
   doi={10.1016/j.aim.2021.108074},
}

\bib{CFZ:unipotent}{unpublished}{
author={Cunningham, Clifton},
author={Fiori, Andrew},
author={Zhang, Qing},
title={Toward the endoscopic classification of unipotent representations of $p$-adic $G_2$},
note={\href{http://arxiv.org/abs/2101.04578}{http://arxiv.org/abs/2101.04578}},
}

\iffalse
\bib{CDFMX}{unpublished}{
   author={Cunningham, Clifton},
   author={Dijols, Sarah},
   author={Fiori, Andrew},
   author={Moussaoui, Ahmed},
   author={Xu, Bin},
   title={A geometric characterization of Arthur packets for $p$-adic general linear groups},
   %journal={},
   %volume={},
   date={in preparation},
   %number={},
   %pages={},
   %note={}
}
\fi

\bib{deCM}{article}{
   author={de Cataldo, Mark Andrea A.},
   author={Migliorini, Luca},
   title={The decomposition theorem, perverse sheaves and the topology of
   algebraic maps},
   journal={Bull. Amer. Math. Soc. (N.S.)},
   volume={46},
   date={2009},
   number={4},
   pages={535--633},
   issn={0273-0979},
%   review={\MR{2525735}},
   doi={10.1090/S0273-0979-09-01260-9},
}

\bib{evens1997}{article}{
author = {Evens, Sam},
 author =  {Mirković, Ivan},
doi = {10.1215/S0012-7094-97-08613-0},
fjournal = {"Duke Mathematical Journal},
journal = {Duke Math. J.},
number = {3},
pages = {435--464},
publisher = {Duke University Press},
title = {Fourier transform and the Iwahori-Matsumoto involution},
url = {https://doi.org/10.1215/S0012-7094-97-08613-0},
volume = {86},
year = {1997}
}

\bib{Harris-Taylor}{book}{
   author={Harris, Michael},
   author={Taylor, Richard},
   title={The geometry and cohomology of some simple Shimura varieties},
   series={Annals of Mathematics Studies},
   volume={151},
   note={With an appendix by Vladimir G. Berkovich},
   publisher={Princeton University Press, Princeton, NJ},
   date={2001},
   pages={viii+276},
   isbn={0-691-09090-4},
%   review={\MR{1876802}},
}

\bib{KS}{article}{
   author={Kashiwara, Masaki},
   author={Saito, Yoshihisa},
   title={Geometric construction of crystal bases},
   journal={Duke Math. J.},
   volume={89},
   date={1997},
   number={1},
   pages={9--36},
   issn={0012-7094},
%   review={\MR{1458969}},
   doi={10.1215/S0012-7094-97-08902-X},
}

\bib{KS_sheaves}{book}{
	Author = {Kashiwara, Masaki},
	author={Schapira, Pierre},
	Doi = {10.1007/978-3-662-02661-8},
	Isbn = {3-540-51861-4},
	Mrclass = {58G07 (18F20 32C38 35A27)},
	Mrnumber = {1074006},
	Mrreviewer = {Michael M. Kapranov},
	Note = {With a chapter in French by Christian Houzel},
	Pages = {x+512},
	Publisher = {Springer-Verlag, Berlin},
	Series = {Grundlehren der Mathematischen Wissenschaften},% [Fundamental Principles of Mathematical Sciences]},
	Title = {Sheaves on manifolds},
	Url = {https://doi-org.ezproxy.lib.ucalgary.ca/10.1007/978-3-662-02661-8},
	Volume = {292},
	Year = {1990},
	Bdsk-Url-1 = {https://doi-org.ezproxy.lib.ucalgary.ca/10.1007/978-3-662-02661-8},
	Bdsk-Url-2 = {https://doi.org/10.1007/978-3-662-02661-8}
}
	
\bib{KZ}{article}{
   author={Knight, Harold},
   author={Zelevinsky, Andrei},
   title={Representations of quivers of type $A$ and the multisegment
   duality},
   journal={Adv. Math.},
   volume={117},
   date={1996},
   number={2},
   pages={273--293},
   issn={0001-8708},
%   review={\MR{1371654}},
   doi={10.1006/aima.1996.0013},
}

\bib{Lusztig:Quivers}{article}{
   author={Lusztig, George},
   title={Quivers, perverse sheaves, and quantized enveloping algebras},
   journal={J. Amer. Math. Soc.},
   volume={4},
   date={1991},
   number={2},
   pages={365--421},
   issn={0894-0347},
%   review={\MR{1088333}},
   doi={10.2307/2939279},
}

\bib{Lusztig:classification-unipotent}{article}{
   author={Lusztig, George},
   title={Classification of unipotent representations of simple $p$-adic
   groups},
   journal={Internat. Math. Res. Notices},
   date={1995},
   number={11},
   pages={517--589},
   issn={1073-7928},
%   review={\MR{1369407}},
   doi={10.1155/S1073792895000353},
}

\bib{MW:involution}{article}{
   author={M\oe glin, Colette},
   author={Waldspurger, Jean-Loup},
   title={Sur l'involution de Zelevinski},
%   language={French},
   journal={J. Reine Angew. Math.},
   volume={372},
   date={1986},
   pages={136--177},
   issn={0075-4102},
%   review={\MR{863522}},
   doi={10.1515/crll.1986.372.136},
}

\bib{Pyasetskii}{article}{
	Author = {Pjasecki\u\i , V. S.},
%	Date-Added = {2018-03-31 19:35:51 +0000},
%	Date-Modified = {2018-03-31 21:07:37 +0000},
	Fjournal = {Akademija Nauk SSSR. Funkcional\cprime nyi Analiz i ego Prilo\v zenija},
	Issn = {0374-1990},
	Journal = {Funkcional. Anal. i Prilo\v zen.},
%	Mrclass = {22E99 (14L10)},
%	Mrnumber = {0390138},
%	Mrreviewer = {H. Freudenthal},
	Number = {4},
	Pages = {85--86},
	Title = {Linear {L}ie groups that act with a finite number of orbits},
	Volume = {9},
	Year = {1975}
}

\bib{Solleveld:LLC-unipotent}{unpublished}{
author={Solleveld, Maarten},
title={A local Langlands correspondence for unipotent representations},
note={\href{http://arxiv.org/abs/1806.11357}{http://arxiv.org/abs/1806.11357}},
}

\bib{Vogan:Langlands}{article}{
   author={Vogan, David A., Jr.},
   title={The local Langlands conjecture},
   conference={
      title={Representation theory of groups and algebras},
   },
   book={
      series={Contemp. Math.},
      volume={145},
      publisher={Amer. Math. Soc., Providence, RI},
   },
   date={1993},
   pages={305--379},
%   review={\MR{1216197}},
   doi={10.1090/conm/145/1216197},
}

\bib{VW}{article}{
   author={Vilonen, Kari},
   author={Williamson, Geordie},
   title={Characteristic cycles and decomposition numbers},
   journal={Math. Res. Lett.},
   volume={20},
   date={2013},
   number={2},
   pages={359--366},
   issn={1073-2780},
 %  review={\MR{3151652}},
   doi={10.4310/MRL.2013.v20.n2.a11},
}

\bib{W}{article}{
   author={Williamson, Geordie},
   title={A reducible characteristic variety in type $A$},
   conference={
      title={Representations of reductive groups},
   },
   book={
      series={Progr. Math.},
      volume={312},
      publisher={Birkh\"{R}user/Springer, Cham},
   },
   date={2015},
   pages={517--532},
 %  review={\MR{3496286}},
}

\bib{Z2}{article}{
   author={Zelevinsky, Andrei V.},
   title={Induced representations of reductive ${\germ p}$-adic groups. II.
   On irreducible representations of ${\rm GL}(n)$},
   journal={Ann. Sci. \'{E}cole Norm. Sup. (4)},
   volume={13},
   date={1980},
   number={2},
   pages={165--210},
   issn={0012-9593},
%   review={\MR{584084}},
}

\end{biblist}
\end{bibdiv}

\end{document}